\newtheorem{lemma}{Lemma}
\newtheorem{proposition}[lemma]{Proposition}
\newtheorem{theorem}[lemma]{Theorem}
\newtheorem{example}[lemma]{Example}
\newtheorem{algorithm}{Algorithm}
\numberwithin{equation}{section}
\numberwithin{lemma}{section}
\newcommand{\N}{\mathbb{N}}    
\newcommand{\NN}{\mathbb{N}_0} 
\newcommand{\R}{\mathbb{R}}    
\newcommand{\bo}{\mathcal{O}}
\newcommand{\Qu}{\textsf{u}}
\newcommand{\Qa}{\textsf{a}}
\newcommand{\Qb}{\textsf{b}}
\newcommand{\Qc}{\textsf{c}}
\newcommand{\Qd}{\textsf{d}}
\newcommand{\Qk}{\textsf{k}}
\newcommand{\Qr}{\textsf{r}}
\newcommand{\be}{ \begin{equation} }
	\newcommand{\ee}{ \end{equation} }
\newcommand{\ind}{\Lambda}
\newcommand{\nab}{\nabla}
\journal{XXX}
\begin{document}
	
	\begin{frontmatter}

	\title{Fourth-Order Compact FDMs for  Steady and  Time-Dependent Nonlinear Convection-Diffusion Equations}

		\author[rvt1]{Qiwei Feng\fnref{fn1}\corref{cor}}
\ead{qif21@pitt.edu,qfeng@ualberta.ca}
\address[rvt1]{Department of Mathematics, University of Pittsburgh, Pittsburgh, PA 15260 USA.}

\author[rvt1]{Catalin Trenchea\fnref{fn1}}
\ead{trenchea@pitt.edu}

\cortext[cor]{Corresponding author.}
\fntext[fn1]{Qiwei Feng is partially supported by  the Mathematics Research Center, Department of Mathematics, University of Pittsburgh, Pittsburgh, PA, USA.
Catalin Trenchea is partially supported by the National Science Foundation under grant DMS-2208220.
}

	\makeatletter \@addtoreset{equation}{section} \makeatother

	\begin{abstract}
		In this paper, we discuss the  steady and time-dependent nonlinear convection-diffusion (advection-diffusion) equations  with the Dirichlet boundary condition. For the steady  nonlinear equation, we use an iteration method to reformulate the nonlinear equation into its linear counterpart, and derive a fourth-order compact 9-point finite difference method (FDM) to solve the  reformulated equation on a uniform Cartesian grid. To increase the accuracy, we modify the FDM to reduce the pollution effect. The linear system of the FDM generates an M-matrix,  provided the mesh size $h$ is sufficiently small. For the  time-dependent nonlinear equation, we discrete the temporal domain using the Crank-Nicolson (CN), BDF3, BDF4 time stepping methods, and apply a similar iterative method to rewrite the nonlinear equation as the same linear convection-diffusion equation. Then we propose the second-order to fourth-order compact 9-point FDMs with the reduced pollution effects on a uniform Cartesian grid. We prove that all FDMs satisfy the discrete maximum principle for  sufficiently small $h$.
		Several examples with the variable and  time-dependent  diffusion coefficients and  challenging nonlinear terms (not limited to the Burgers equation) are provided to verify the accuracy and the desired convergence rates in the $l_2$ and $l_{\infty}$ norms in space and time. We also compare our second-order CN method with the third-order BDF3 method and the discontinuous Galerkin (DG) method, and the numerical results demonstrate that our FDM with the coarse time step generates the small error. Especially, if the same BDF3 scheme is applied, our error is 1.6\% of that obtained from the DG method.
		The proposed methods can be easily  extended to a 3D spatial domain and more general nonlinear convection-diffusion-reaction equations.

	\end{abstract}

		\begin{keyword}
Fourth-order of consistency, compact 9-point FDM, reduced pollution effect, nonlinear  convection-diffusion equation
	
	\MSC[2020]{65M06, 65N06, 35G30, 35G31}
	
\end{keyword}

\end{frontmatter}	
	
	\section{Introduction}

	The nonlinear convection-diffusion-reaction equations (advection-diffusion-reaction equations, the terms 'convection' and 'advection' are used indiscriminately in numerical
	analysis \citep[p.2]{Hundsdorfer2003}) arise in a wide range of important physical and engineering applications. They serve as the foundation for several well-known models, such as the porous medium equation for the investigation of the fluid transport  in porous materials, the Navier-Stokes equation for modeling flows in pipes and channels, and the Fisher-Kolmogorov-Petrovsky-Piskunov equation to describe population growth. To analyze and numerically solve various nonlinear convection-diffusion-reaction equations, we present the following literature review. Additional related works can be found in their references.

	In \citep{Burman2002},  Burman et al. proposed the stabilized Galerkin approximation  for the steady nonlinear  convection-diffusion-reaction equation
	$-\epsilon \Delta u	+ {\bm v} \cdot \nab u +r(u) = f$  with the Dirichlet and Neumann boundary conditions, and  the nonlinear reaction term $r(u)$, on a square in the numerical examples.
	In \citep{Clain2024}, Clain et al.  established the second-order to sixth-order FDMs  with the uniform Cartesian grids for the steady nonlinear  convection-diffusion-reaction equation $	-\kappa(u)\Delta u +   {\bm F}(u)\cdot  \nab(u)+r(u) u = f$ with the nonlinear terms of  diffusion $\kappa(u)$,  convection $F(u)$, and  reaction $r(u)$, the Dirichlet, Neumann, and nonlinear Robin conditions in a curved domain. 
	For $u_t-\epsilon \Delta u	+ \nab\cdot{\bm F} (u)  = f$  with the nonlinear convective term ${\bm F} (u)$, the Dirichlet and/or Neumann boundary conditions on a  bounded polyhedral domain in 2D and/or 3D, the priori asymptotic error estimate of the discontinuous Galerkin (DG) method was deduced in \citep{Dolej2007,Dolej2005,DolejVlas2008,Feistauer2011}, and Feistauer et al.  proposed in \citep{Feistauer1997} the convergence analysis of the combined finite volume-finite element (FV-FE) method. 
	In \citep{Dolej2007,Dolej2005,DolejVlas2008}, Dolej\v{s}\'{i} et al. tested the 2D viscous Burgers equation with  the Dirichlet boundary condition on a square in the numerical example.
	Nguyen et al. introduced in \citep{Nguyen2009}  the fourth-order DG method for  $-\nab\cdot (\kappa \nab u) +   \nab\cdot{\bm F} (u)  = f$, and the third-order DG method for  $u_t-\nab\cdot (\kappa \nab u) +   \nab\cdot{\bm F} (u)  = f$ with the constant diffusion $\kappa$, the nonlinear convection ${\bm F}(u)=(u^2/2,u^2/2)$, and the Dirichlet boundary condition on a rectangle in the numerical experiments.
	For $u_t-\nab\cdot (K(u) \nab u) 	+ \nab\cdot{\bm F} (u)  = 0$ with the periodic boundary condition and the nonlinear diffusion term $K(u)$, where $K(u)$ is a symmetric
	and positive definite matrix of the variable $u$,   
	Xu et al. in \citep{XuShu2007}  constructed the error estimate for the semi-discrete local DG method on a rectangular domain, and 
	Yan  provided in \citep{Yan2013} the error analysis for the nonsymmetric DG method using the uniform and nonuniform meshes on a bounded domain in 1D and 2D. The equation $u_t-(u^2)_{xx}-(u^2)_{yy}=0$ and the 2D strongly degenerate parabolic problem in a square domain were examined in \citep{Yan2013}.
	New central schemes that maintain the high-resolution independent of $\bo(1/\tau)$ for the general nonlinear diffusion equation $u_t-(P(u,u_x,u_y))_x-(Q(u,u_x,u_y))_x	+ \nab\cdot{\bm F} (u)  = 0$ were offered by Kurganov et al. in \citep{Kurganov2000}, where $\tau$ is the time step of the temporal discretization.
	In \citep{Eymard2010}, Eymard et al.  described the combined FV-FE method for $(\alpha(u))_t-\nab\cdot (K \nab u) 	+ \nab \cdot ({\bm v}   u)+r(u)  = f$ with nonlinear $\alpha(u)$ and $r(u)$,  the Dirichlet and Neumann boundary conditions, and the nonmatching grids on a bounded  domain  in 2D and 3D. 
	In \citep{Tezduyar1986}, Tezduyar et al.  built the streamline-upwind/Petrov-Galerkin method  for the steady  nonlinear  convection-diffusion-reaction equation
	systems. The numerical approximated solutions from \citep{Tezduyar1986} are 
	accurate with the minimal numerical dissipation and oscillations. 
	For the systems of the  time-dependent nonlinear 
	convection-diffusion-reaction equations,
	Bause et al.  presented in \citep{Bause2012} a conforming FEM of up to the fourth order using the regular triangular mesh, Cockburn et al. in \citep{Cockburn1998} extended the Runge-Kutta DG method of the purely hyperbolic system to design  the local DG method, and Michoski et al. extended in \citep{Michoski2017}  the von Neumann analysis
	to prove the  nonlinear stability of the high-order  DG method.
	Furthermore,
	Canc\'{e}s et al. analyzed in \citep{Cances2020} the large time behavior of the nonlinear FVMs for the linear  anisotropic convection-diffusion equations with the Dirichlet and Neumann boundary conditions in a bounded domain.

	In this paper,  we consider the steady   and  time-dependent nonlinear convection-diffusion equations in the spatial  domain $\Omega=(0,1)^2$ and the temporal domain $I=[0,1]$ as follows
	\be\label{Model：Original:Elliptic}
	\begin{cases}
		-\nab\cdot (\kappa \nab u) +   \nab\cdot{\bm F} (u)  = f, \qquad \qquad  &  (x,y)\in \Omega,\\
		u =g,  & (x,y)\in \partial \Omega,
	\end{cases}
	\ee
	and
	\be\label{Model：Original:Parabolic}
	\begin{cases}
		u_t-\nab\cdot (\kappa \nab u) +   \nab\cdot{\bm F} (u)  = f, \qquad \qquad  &  (x,y)\in \Omega  \hspace{0.63cm} \text{and}  \quad t\in I, \\	
		u= u^0,     & (x,y)\in \Omega \hspace{0.63cm} \text{and} \quad t=0,\\
		u =g,    & (x,y)\in \partial \Omega \quad \text{and} \quad t\in I,
	\end{cases}
	\ee
	where $u$ is the unknown scalar variable function; $\kappa$ and $f$ denote the available scalar-valued diffusion coefficient and the source term, respectively; $g$ and $u^0$ represent the given Dirichlet boundary function of $u$ on $\Omega$ and the initial value of $u$ at $t = 0$, respectively. Here  $\bm F$ is the vector-valued nonlinear function of $u$, i.e., 
	\[
	{\bm F(u)}=(\alpha(u),\beta(u)), \quad \text{with two nonlinear scalar functions } \alpha(u),\ \beta(u)  \text{ of the variable } u.
	\]
	For example, if 
	\[
	(\alpha(u),\beta(u))=(u^2/2, u^2/2),\quad (\sin(u),\cos(u)),\quad  (\cos(u),\exp(u)),
	\]
	then
	\[
	\nab\cdot{\bm F} (u)=uu_x+uu_y, \quad \cos(u)u_x-\sin(u)u_y, \quad -\sin(u)u_x+\exp(u)u.
	\]
	In this paper, we assume that $\kappa>0$, and $u,\kappa,f, \alpha, \beta$ are smooth in $\Omega$ and $I$.
The rest of the paper is organized as follows:
	
	In \cref{sec:elliptic} we consider the steady nonlinear  convection-diffusion equation \eqref{Model：Original:Elliptic}. We first reformulate \eqref{Model：Original:Elliptic} into a linear convection-diffusion equation in \cref{sec:reformulate:1}. Then, we provide the explicit expression of the fourth-order compact 9-point FDM  for the linear equation  in \cref{subsec:elliptic} by using the techniques in \cref{subsec:tech}. To increase the accuracy of our FDM, we reduce the pollution effect  in \cref{sec:FDM:reduced}.

	In \cref{sec:parabolic} we discuss the time-dependent nonlinear convection-diffusion equation \eqref{Model：Original:Parabolic}. We apply the Crank-Nicolson (CN), BDF3, and BDF4 time stepping methods, and rewrite \eqref{Model：Original:Parabolic} as the linear convection-diffusion equation in \cref{sec:reformulate:2}. The second- to fourth-order compact 9-point FDMs with the reduced pollution effects are proposed in \cref{subsec:parabolic}.
	
	In \cref{sec:Numeri} we provide various examples with the variable $\kappa(x,y),\kappa(x,y,t)$, and the challenging $\alpha(u),\beta(u)$, and compare our scheme with the  discontinuous Galerkin (DG) and the BDF3 methods in \citep{Nguyen2009}.
	
	In \cref{sec:Contribu} we highlight the main conclusion of this paper.
	\section{Fourth-order compact 9-point FDMs for the  steady nonlinear convection-diffusion equation}\label{sec:elliptic}

	In this section we convert the steady nonlinear  convection-diffusion equation \eqref{Model：Original:Elliptic} into its linear counterpart by a fixed point method, and derive the fourth-order compact 9-point FDMs.
	\subsection{Reformulation of  the steady nonlinear  convection-diffusion equation}\label{sec:reformulate:1}

	Clearly, \eqref{Model：Original:Elliptic} results in 
	\[
	-  \kappa \Delta u -\kappa_x u_x -\kappa_y u_y +   \alpha_u(u)u_x+\beta_u(u)u_y  = f.
	\]
	Let 
	\be\label{a:b:psi}
	a:=\frac{\kappa_x -\alpha_u(u)} {  \kappa}, \qquad b:=\frac{\kappa_y -\beta_u(u)} {  \kappa}, \qquad \psi :=-\frac{f} {  \kappa}.
	\ee
	Then \eqref{Model：Original:Elliptic} is equivalent to
	\be\label{Model：simplify:Elliptic}
	\begin{cases}
		\Delta u +a u_x +b u_y    = \psi, \qquad \qquad  &  (x,y)\in \Omega,\\
		u =g,  &  (x,y)\in  \partial \Omega.
	\end{cases}
	\ee
	In this paper,  an iteration method is applied  to numerically solve the nonlinear  equation \eqref{Model：simplify:Elliptic}:
	\be\label{Model：iteration:Elliptic}
	\begin{cases}
		\Delta u_{\Qk+1} +a_{\Qk} (u_{\Qk+1})_x +b_{\Qk} (u_{\Qk+1})_y   =\psi, \qquad \qquad &  (x,y)\in \Omega,\\
		u_{\Qk+1} =g,  &  (x,y)\in \partial \Omega,\\
		\Qk:=\Qk+1, \ \text{the initial  } \Qk=0,\ u_0= 0,   
	\end{cases}
	\ee
	where $\Qk$ denotes the index of the $\Qk$-th iteration, and 
	\be\label{aibi:elliptic}
	a_{\Qk}:=\frac{\kappa_x -\alpha_u(u_\Qk)} {  \kappa}, \qquad b_\Qk:=\frac{\kappa_y -\beta_u(u_\Qk)} {  \kappa}.
	\ee
	As $u_\Qk$ is computed in the $\Qk$-th iteration, \eqref{Model：iteration:Elliptic} can be considered as the linear problem in the $(\Qk+1)$-th iteration. Precisely, we rewrite the nonlinear problem \eqref{Model：Original:Elliptic} as the following steady linear convection-diffusion equation:
	\be\label{Model：linear:Elliptic}
	\begin{cases}
		\Delta \Qu  +\Qa \Qu_x +\Qb \Qu_y   = \psi, \qquad \qquad &  (x,y)\in \Omega,\\
		\Qu =g,  &  (x,y)\in  \partial \Omega,
	\end{cases}
	\ee
	where
	\be\label{QuQaQb}
	\Qu:=u_{\Qk+1}, \qquad \Qa:=a_\Qk=\frac{\kappa_x -\alpha_u(u_\Qk)} {  \kappa}, \qquad \Qb:=b_\Qk=\frac{\kappa_y -\beta_u(u_\Qk)} {  \kappa}.
	\ee
	To solve \eqref{Model：linear:Elliptic} efficiently, we propose the fourth-order compact 9-point FDMs in the following \cref{subsec:tech,subsec:elliptic,sec:FDM:reduced}.

	\subsection{Techniques to derive the high-order FDMs }\label{subsec:tech}
	\begin{figure}[h]
		\centering	
		\begin{subfigure}[b]{0.3\textwidth}
			\hspace{0.3cm}
			\begin{tikzpicture}[scale = 0.45]
				\node (A) at (0,7) {$\Qu^{(0,0)}$};
				\node (A) at (0,6) {$\Qu^{(0,1)}$};
				\node (A) at (0,5) {$\Qu^{(0,2)}$};
				\node (A) at (0,4) {$\Qu^{(0,3)}$};
				\node (A) at (0,3) {$\Qu^{(0,4)}$};
				\node (A) at (0,2) {$\Qu^{(0,5)}$};
				\node (A) at (0,1) {$\Qu^{(0,6)}$};
				\node (A) at (0,0) {$\Qu^{(0,7)}$};
				\node (A) at (2,7) {$\Qu^{(1,0)}$};
				\node (A) at (2,6) {$\Qu^{(1,1)}$};
				\node (A) at (2,5) {$\Qu^{(1,2)}$};
				\node (A) at (2,4) {$\Qu^{(1,3)}$};
				\node (A) at (2,3) {$\Qu^{(1,4)}$};
				\node (A) at (2,2) {$\Qu^{(1,5)}$};
				\node (A) at (2,1) {$\Qu^{(1,6)}$};	
				\node (A) at (4,7) {$\Qu^{(2,0)}$};
				\node (A) at (4,6) {$\Qu^{(2,1)}$};
				\node (A) at (4,5) {$\Qu^{(2,2)}$};
				\node (A) at (4,4) {$\Qu^{(2,3)}$};
				\node (A) at (4,3) {$\Qu^{(2,4)}$};
				\node (A) at (4,2) {$\Qu^{(2,5)}$};
				\node (A) at (6,7) {$\Qu^{(3,0)}$};
				\node (A) at (6,6) {$\Qu^{(3,1)}$};
				\node (A) at (6,5) {$\Qu^{(3,2)}$};
				\node (A) at (6,4) {$\Qu^{(3,3)}$};
				\node (A) at (6,3) {$\Qu^{(3,4)}$};
				\node (A) at (8,7) {$\Qu^{(4,0)}$};
				\node (A) at (8,6) {$\Qu^{(4,1)}$};
				\node (A) at (8,5) {$\Qu^{(4,2)}$};
				\node (A) at (8,4) {$\Qu^{(4,3)}$};
				\node (A) at (10,7) {$\Qu^{(5,0)}$};
				\node (A) at (10,6) {$\Qu^{(5,1)}$};
				\node (A) at (10,5) {$\Qu^{(5,2)}$};
				\node (A) at (12,7) {$\Qu^{(6,0)}$};
				\node (A) at (12,6) {$\Qu^{(6,1)}$};
				\node (A) at (14,7) {$\Qu^{(7,0)}$};
				\node (A) at (15,3.8) {\color{blue}\huge{$=$}};
			\end{tikzpicture}
		\end{subfigure}
		\begin{subfigure}[b]{0.3\textwidth}
			\hspace{3.3cm}
			\begin{tikzpicture}[scale = 0.45]
				\node (A) at (0,7) {$\Qu^{(0,0)}$};
				\node (A) at (0,6) {$\Qu^{(0,1)}$};
				\node (A) at (0,5) {$\Qu^{(0,2)}$};
				\node (A) at (0,4) {$\Qu^{(0,3)}$};
				\node (A) at (0,3) {$\Qu^{(0,4)}$};
				\node (A) at (0,2) {$\Qu^{(0,5)}$};
				\node (A) at (0,1) {$\Qu^{(0,6)}$};
				\node (A) at (0,0) {$\Qu^{(0,7)}$};
				\node (A) at (2,7) {$\Qu^{(1,0)}$};
				\node (A) at (2,6) {$\Qu^{(1,1)}$};
				\node (A) at (2,5) {$\Qu^{(1,2)}$};
				\node (A) at (2,4) {$\Qu^{(1,3)}$};
				\node (A) at (2,3) {$\Qu^{(1,4)}$};
				\node (A) at (2,2) {$\Qu^{(1,5)}$};
				\node (A) at (2,1) {$\Qu^{(1,6)}$};	 
				\node (A) at (3.8,4) {\color{blue}\huge{$\cup$}};     	
			\end{tikzpicture}
		\end{subfigure}
		\begin{subfigure}[b]{0.3\textwidth}
			\hspace{0.3cm}
			\vspace{0.9cm}
			\begin{tikzpicture}[scale = 0.45]
				\node (A) at (4,7) {$\Qu^{(2,0)}$};
				\node (A) at (4,6) {$\Qu^{(2,1)}$};
				\node (A) at (4,5) {$\Qu^{(2,2)}$};
				\node (A) at (4,4) {$\Qu^{(2,3)}$};
				\node (A) at (4,3) {$\Qu^{(2,4)}$};
				\node (A) at (4,2) {$\Qu^{(2,5)}$};
				\node (A) at (6,7) {$\Qu^{(3,0)}$};
				\node (A) at (6,6) {$\Qu^{(3,1)}$};
				\node (A) at (6,5) {$\Qu^{(3,2)}$};
				\node (A) at (6,4) {$\Qu^{(3,3)}$};
				\node (A) at (6,3) {$\Qu^{(3,4)}$};
				\node (A) at (8,7) {$\Qu^{(4,0)}$};
				\node (A) at (8,6) {$\Qu^{(4,1)}$};
				\node (A) at (8,5) {$\Qu^{(4,2)}$};
				\node (A) at (8,4) {$\Qu^{(4,3)}$};
				\node (A) at (10,7) {$\Qu^{(5,0)}$};
				\node (A) at (10,6) {$\Qu^{(5,1)}$};
				\node (A) at (10,5) {$\Qu^{(5,2)}$};
				\node (A) at (12,7) {$\Qu^{(6,0)}$};
				\node (A) at (12,6) {$\Qu^{(6,1)}$};
				\node (A) at (14,7) {$\Qu^{(7,0)}$};
			\end{tikzpicture}
		\end{subfigure}	
		\caption
		{The illustration for $\ind_M=\ind_M^1 \cup \ind_M^2$ in \eqref{ind}--\eqref{ind12} with $M=7$. }
		\label{fig:umn:1}
	\end{figure}
	\begin{figure}[h]
		\centering	
		\begin{subfigure}[b]{0.3\textwidth}
			\hspace{0cm}
			\vspace{0.9cm}
			\begin{tikzpicture}[scale = 0.45]
				\node (A) at (4,7) {$\Qu^{(2,0)}$};
				\node (A) at (4,6) {$\Qu^{(2,1)}$};
				\node (A) at (4,5) {$\Qu^{(2,2)}$};
				\node (A) at (4,4) {$\Qu^{(2,3)}$};
				\node (A) at (4,3) {$\Qu^{(2,4)}$};
				\node (A) at (4,2) {$\Qu^{(2,5)}$};
				\node (A) at (6,7) {$\Qu^{(3,0)}$};
				\node (A) at (6,6) {$\Qu^{(3,1)}$};
				\node (A) at (6,5) {$\Qu^{(3,2)}$};
				\node (A) at (6,4) {$\Qu^{(3,3)}$};
				\node (A) at (6,3) {$\Qu^{(3,4)}$};
				\node (A) at (8,7) {$\Qu^{(4,0)}$};
				\node (A) at (8,6) {$\Qu^{(4,1)}$};
				\node (A) at (8,5) {$\Qu^{(4,2)}$};
				\node (A) at (8,4) {$\Qu^{(4,3)}$};
				\node (A) at (10,7) {$\Qu^{(5,0)}$};
				\node (A) at (10,6) {$\Qu^{(5,1)}$};
				\node (A) at (10,5) {$\Qu^{(5,2)}$};
				\node (A) at (12,7) {$\Qu^{(6,0)}$};
				\node (A) at (12,6) {$\Qu^{(6,1)}$};
				\node (A) at (14,7) {$\Qu^{(7,0)}$};
				\node (A) at (15,3.8) {\color{blue}\huge{$=$}};
			\end{tikzpicture}
		\end{subfigure}
		\begin{subfigure}[b]{0.3\textwidth}
			\hspace{1cm}
			\begin{tikzpicture}[scale = 0.45]
				\node (A) at (0,7) {$\Qu^{(0,0)}$};
				\node (A) at (0,6) {$\Qu^{(0,1)}$};
				\node (A) at (0,5) {$\Qu^{(0,2)}$};
				\node (A) at (0,4) {$\Qu^{(0,3)}$};
				\node (A) at (0,3) {$\Qu^{(0,4)}$};
				\node (A) at (0,2) {$\Qu^{(0,5)}$};
				\node (A) at (0,1) {$\Qu^{(0,6)}$};
				\node (A) at (0,0) {$\Qu^{(0,7)}$};
				\node (A) at (2,7) {$\Qu^{(1,0)}$};
				\node (A) at (2,6) {$\Qu^{(1,1)}$};
				\node (A) at (2,5) {$\Qu^{(1,2)}$};
				\node (A) at (2,4) {$\Qu^{(1,3)}$};
				\node (A) at (2,3) {$\Qu^{(1,4)}$};
				\node (A) at (2,2) {$\Qu^{(1,5)}$};
				\node (A) at (2,1) {$\Qu^{(1,6)}$};	      	
			\end{tikzpicture}
		\end{subfigure}
		\begin{subfigure}[b]{0.3\textwidth}
			\hspace{-0.8cm}
			\vspace{0.9cm}
			\begin{tikzpicture}[scale = 0.45]
			    \node (A) at (1.1,4) {\color{blue}\huge{$+$}};
				\node (A) at (4,7) {$\Qa^{(0,0)}$};
				\node (A) at (4,6) {$\Qa^{(0,1)}$};
				\node (A) at (4,5) {$\Qa^{(0,2)}$};
				\node (A) at (4,4) {$\Qa^{(0,3)}$};
				\node (A) at (4,3) {$\Qa^{(0,4)}$};
				\node (A) at (4,2) {$\Qa^{(0,5)}$};
				\node (A) at (6,7) {$\Qa^{(1,0)}$};
				\node (A) at (6,6) {$\Qa^{(1,1)}$};
				\node (A) at (6,5) {$\Qa^{(1,2)}$};
				\node (A) at (6,4) {$\Qa^{(1,3)}$};
				\node (A) at (6,3) {$\Qa^{(1,4)}$};
				\node (A) at (8,7) {$\Qa^{(2,0)}$};
				\node (A) at (8,6) {$\Qa^{(2,1)}$};
				\node (A) at (8,5) {$\Qa^{(2,2)}$};
				\node (A) at (8,4) {$\Qa^{(2,3)}$};
				\node (A) at (10,7) {$\Qa^{(3,0)}$};
				\node (A) at (10,6) {$\Qa^{(3,1)}$};
				\node (A) at (10,5) {$\Qa^{(3,2)}$};
				\node (A) at (12,7) {$\Qa^{(4,0)}$};
				\node (A) at (12,6) {$\Qa^{(4,1)}$};
				\node (A) at (14,7) {$\Qa^{(5,0)}$};
			\end{tikzpicture}
		\end{subfigure}	
			\begin{subfigure}[b]{0.3\textwidth}
			\hspace{1.65cm}
			\begin{tikzpicture}[scale = 0.45]
				\node (A) at (2.2,4.5) {\color{blue}\huge{$+$}};
				\node (A) at (4,7) {$\Qb^{(0,0)}$};
				\node (A) at (4,6) {$\Qb^{(0,1)}$};
				\node (A) at (4,5) {$\Qb^{(0,2)}$};
				\node (A) at (4,4) {$\Qb^{(0,3)}$};
				\node (A) at (4,3) {$\Qb^{(0,4)}$};
				\node (A) at (4,2) {$\Qb^{(0,5)}$};
				\node (A) at (6,7) {$\Qb^{(1,0)}$};
				\node (A) at (6,6) {$\Qb^{(1,1)}$};
				\node (A) at (6,5) {$\Qb^{(1,2)}$};
				\node (A) at (6,4) {$\Qb^{(1,3)}$};
				\node (A) at (6,3) {$\Qb^{(1,4)}$};
				\node (A) at (8,7) {$\Qb^{(2,0)}$};
				\node (A) at (8,6) {$\Qb^{(2,1)}$};
				\node (A) at (8,5) {$\Qb^{(2,2)}$};
				\node (A) at (8,4) {$\Qb^{(2,3)}$};
				\node (A) at (10,7) {$\Qb^{(3,0)}$};
				\node (A) at (10,6) {$\Qb^{(3,1)}$};
				\node (A) at (10,5) {$\Qb^{(3,2)}$};
				\node (A) at (12,7) {$\Qb^{(4,0)}$};
				\node (A) at (12,6) {$\Qb^{(4,1)}$};
				\node (A) at (14,7) {$\Qb^{(5,0)}$};
			\end{tikzpicture}
		\end{subfigure}	
			\begin{subfigure}[b]{0.3\textwidth}
			\hspace{2cm}
			\begin{tikzpicture}[scale = 0.45]
				\node (A) at (1,4.5) {\color{blue}\huge{$+$}};
\node (A) at (4,7) {$\psi^{(0,0)}$};
\node (A) at (4,6) {$\psi^{(0,1)}$};
\node (A) at (4,5) {$\psi^{(0,2)}$};
\node (A) at (4,4) {$\psi^{(0,3)}$};
\node (A) at (4,3) {$\psi^{(0,4)}$};
\node (A) at (4,2) {$\psi^{(0,5)}$};
\node (A) at (6,7) {$\psi^{(1,0)}$};
\node (A) at (6,6) {$\psi^{(1,1)}$};
\node (A) at (6,5) {$\psi^{(1,2)}$};
\node (A) at (6,4) {$\psi^{(1,3)}$};
\node (A) at (6,3) {$\psi^{(1,4)}$};
\node (A) at (8,7) {$\psi^{(2,0)}$};
\node (A) at (8,6) {$\psi^{(2,1)}$};
\node (A) at (8,5) {$\psi^{(2,2)}$};
\node (A) at (8,4) {$\psi^{(2,3)}$};
\node (A) at (10,7) {$\psi^{(3,0)}$};
\node (A) at (10,6) {$\psi^{(3,1)}$};
\node (A) at (10,5) {$\psi^{(3,2)}$};
\node (A) at (12,7) {$\psi^{(4,0)}$};
\node (A) at (12,6) {$\psi^{(4,1)}$};
\node (A) at (14,7) {$\psi^{(5,0)}$};
			\end{tikzpicture}
		\end{subfigure}	
		\caption
		{The illustration for \eqref{u:pq} with $M=7$. }
		\label{fig:upq:1}
	\end{figure}

	In this section, we present the key techniques for deriving the high-order (not limited to the fourth-order) FDMs in 2D, which can be naturally extended to the three-dimensional spatial domain.
	For $M\in \NN:=\N\cup\{0\}$, we define (see \cref{fig:umn:1} for an illustration)
	\be \label{ind}
	\ind_{M}:=\{ (m,n)\in \NN^2 \; : \;  m+n\le M \}, 
	\ee
	\be \label{ind12}
	\ind_{M}^{ 2}:=\ind_{M}\setminus \ind_{M}^{ 1}\quad \mbox{with}\quad
	\ind_{M}^{ 1}:=\{ (m,n)\in \ind_{M} \; : \; m=0,1\}.
	\ee
	For any smooth function $\varphi$, we also define
	\be\label{varphi:mn}
	\begin{split}
		\varphi^{(m,n)}:=\frac{\partial^{m+n} \varphi(x,y)}{ \partial^m x \partial^n y}, \text{ i.e., the } (m,n)\text{-th order partial derivative of } \varphi(x,y).
	\end{split}
	\ee
	The main technique to derive the high-order FDM is the following formula \eqref{u:GH}. Note that \eqref{u:GH} can be obtained by \citep[(2.2)-(2.12)]{Feng2024}. For the readers' convenience and to further simplify the corresponding expression in \citep[(2.12)]{Feng2024}, we provide more details in the following relations \eqref{u20}-\eqref{u:GH}.
	It follows from \eqref{Model：linear:Elliptic} and \eqref{varphi:mn}
	that
	\be\label{u20}
	\Qu^{(2,0)}  =-\Qu^{(0,2)}-\Qa^{(0,0)} \Qu^{(1,0)} -\Qb^{(0,0)} \Qu^{(0,1)} +  \psi^{(0,0)},
	\ee
	where $\Qa,\Qb, \Qu$ are defined in \eqref{QuQaQb}, and $\psi$ is defined in \eqref{a:b:psi}.
Next,	we take the $m$-th derivative with respect to $x$ of \eqref{u20}, 
	\be\label{u:m20}
	\Qu^{(m+2,0)}  =-\Qu^{(m,2)}- \sum_{i=0}^m{m \choose i}  \Qa^{(m-i,0)} \Qu^{(i+1,0)} -\sum_{i=0}^m{m \choose i} \Qb^{(m-i,0)} \Qu^{(i,1)} +  \psi^{(m,0)}, \quad m\ge 0.
	\ee
We also	take the $n$-th derivative with respect to $y$ of \eqref{u:m20} to obtain
	\be\label{u:m2n}
	\begin{split}
		\Qu^{(m+2,n)}  =&-\Qu^{(m,n+2)}- \sum_{j=0}^n   \sum_{i=0}^m {n \choose j} {m \choose i}  \Qa^{(m-i,n-j)} \Qu^{(i+1,j)}\\ 
		& - \sum_{j=0}^n \sum_{i=0}^m {n \choose j} {m \choose i} \Qb^{(m-i,n-j)} \Qu^{(i,j+1)} +  \psi^{(m,n)}, \quad m,n\ge 0.
	\end{split}
	\ee
Then	\eqref{u:m2n} implies
	\be\label{recursive:1}
	\Qu^{(m+2,n)} \longrightarrow (\Qu^{(m,n+2)}, \Qu^{(i+1,j)},  \Qu^{(i,j+1)}, \psi^{(m,n)}), \quad \text{with} \quad 0\le i \le m,\quad 0\le j\le n, \quad m,n\ge 0.
	\ee
	Applying \eqref{recursive:1} recursively $m+1$ times yields 
	\be\label{recursive:2}
	\Qu^{(m+2,n)} \longrightarrow (\Qu^{(0,j_0)},  \Qu^{(1,j_1)}, \psi^{(i,j)}),
	\ee
	with $0\le j_0 \le m+n+2$, $0\le j_1\le m+n+1$, $m,n\ge 0$, $0 \le i,j \le m+n$, and $i+j\le m+n$ (examples of \eqref{recursive:2} for $\Qu^{(m+2,n)}$ with $m=0,1,2$ are explicitly given in the following identities \eqref{u2q:example}--\eqref{u4q:xi}). Namely,  
	\be\label{u:m2n:2}
	\Qu^{(m+2,n)}  =\sum_{j=0}^{m+2} \xi_{m+2,n,0,j} \Qu^{(0,j)}+\sum_{j=0}^{m+1}  \xi_{m+2,n,1,j} \Qu^{(1,j)}+\mathop{   \sum_{i,j=0}}_{i+j\le m+n}^{m+n}   \eta_{m+2,n,i,j} \psi^{(i,j)},\qquad m,n\ge 0,
	\ee
	where  $ \xi_{m+2,n,0,j},  \xi_{m+2,n,1,j},  \eta_{m+2,n,i,j}  $  are uniquely determined by the above recursive algorithms \eqref{u:m2n}--\eqref{recursive:2}, and only depend on the high-order partial derivatives of $\Qa$ and $\Qb$. Now, we replace $(m+2,n)$ by $(p,q)$ in \eqref{u:m2n:2} to get
	\be\label{u:pq:step1}
	\Qu^{(p,q)}  =\sum_{j=0}^{p} \xi_{p,q,0,j} \Qu^{(0,j)}+\sum_{j=0}^{p-1}  \xi_{p,q,1,j} \Qu^{(1,j)}+\mathop{   \sum_{i,j=0}}_{i+j\le p+q-2}^{p+q-2}   \eta_{p,q,i,j} \psi^{(i,j)}, \qquad p\ge 2, q\ge 0.
	\ee
	According to the definitions of $\ind_{M}, \ind_{M}^{1}, \ind_{M}^{2}$ in \eqref{ind} and \eqref{ind12}, 
	\be\label{u:pq}
	\Qu^{(p,q)}  =\sum_{(m,n)\in \ind_{p+q}^{1} } \xi_{p,q,m,n} \Qu^{(m,n)}+\sum_{(m,n)\in \ind_{p+q-2} } \eta_{p,q,m,n}   \psi^{(m,n)}, \qquad (p,q)\in \ind_{M}^{2},
	\ee
	where $\psi, \Qa,\Qb, \Qu$ are defined in \eqref{a:b:psi} and \eqref{QuQaQb}, and $\xi_{p,q,m,n},   \eta_{p,q,m,n}$ are uniquely determined by the high-order partial derivatives of $\Qa$ and $\Qb$ (see \cref{fig:upq:1} for an illustration). 
	By  the derivations  \eqref{u:m2n}--\eqref{u:pq:step1} of \eqref{u:pq},
	we can say that  $\xi_{p,q,m,n}$ and  $\eta_{p,q,m,n}$ are uniquely determined by the multiplications of the high-order partial derivatives of $\Qa$ and $\Qb$, i.e.,   $\xi_{p,q,m,n}$   and $\eta_{p,q,m,n}$ in \eqref{u:pq} belong to 
	\be\label{Space:S}
	S:=\text{span}\bigg\{ \mathop{\prod_{ i_1,j_1 \in \N_0}}_{v_1,w_1\in \N_0}\Qa^{(i_1,j_1)}\Qb^{(v_1,w_1)},  \mathop{\prod_{ i_2,j_2 \in \N_0 }}_{v_2,w_2\in \N_0}\Qa^{(i_2,j_2)}\Qb^{(v_2,w_2)}, \dots,  \mathop{\prod_{ i_k,j_k \in \N_0 }}_{v_k,w_k\in \N_0}\Qa^{(i_k,j_k)}\Qb^{(v_k,w_k)}\bigg\}, 
	\ee
	with $k\in \N$, and the coefficients in $\R$. Precisely,
	\be\label{xi:and:eta}
	\begin{split}
		& \xi_{p,q,m,n}= \sum_{k \in \N} c_{p,q,m,n,k}\prod_{ i_k,j_k,v_k,w_k\in \N_0}\Qa^{(i_k,j_k)}\Qb^{(v_k,w_k)}, \quad \text{with} \quad c_{p,q,m,n,k}\in \R,\\
		& \eta_{p,q,m,n}= \sum_{k \in \N} d_{p,q,m,n,k}\prod_{ i_k,j_k,v_k,w_k\in \N_0}\Qa^{(i_k,j_k)}\Qb^{(v_k,w_k)}, \quad \text{with} \quad d_{p,q,m,n,k}\in \R.
	\end{split}
	\ee

	For example, $\Qu^{(p,q)}$ with $(p,q)\in \ind_{M}^{2}$, $M\ge 4$, and $p=2,3,4$ in \eqref{u:pq} can be explicitly given by
	\be\label{u2q:example}
	\Qu^{(2,q)}  =-\Qu^{(0,q+2)}- \sum_{k=0}^q{q \choose k}  \left[ \Qa^{(0,q-k)} \Qu^{(1,k)} + \Qb^{(0,q-k)} \Qu^{(0,k+1)}\right] +  \psi^{(0,q)}, \quad  0 \le q \le M-2,
	\ee	
	\be\label{u3q:example}
	\begin{split}	
		\Qu^{(3,q)}  =&	-\Qu^{(1, q+2)}+\sum_{k=0}^q{q \choose k} \big[ (\Qa\Qb - \Qb^{(1, 0)})^{(0,q-k)}\Qu^{(0, k+1)}		+ \Qa^{(0,q-k)}\Qu^{(0, k+2)} \\
		& 	+(\Qa^2 - \Qa^{(1, 0)})^{(0,q-k)}\Qu^{(1, k)}\big]- \sum_{k=0}^q{q \choose k} \Qb^{(0,q-k)}\Qu^{(1, k+1)}  \\
		&-\sum_{k=0}^q{q \choose k}\Qa^{(0,q-k)} \psi^{(0,k)}+ \psi^{(1,q)},\quad   0 \le q \le M-3,
	\end{split}
	\ee
	\be\label{u4q:example}
	\begin{split}
		\Qu^{(4,q)}  =&	
		\sum_{k=0}^q{q \choose k} \Big[ 	\xi_{4,0,0,1}^{(0,q-k)} \Qu^{(0, k+1)}+ \xi_{4,0,0,2}^{(0,q-k)} \Qu^{(0, k+2)}
		+2\Qb^{(0,q-k)}\Qu^{(0, k+3)} 
		+  \xi_{4,0,1,0}^{(0,q-k)} \Qu^{(1, k)} 
		\\
		& +  \xi_{4,0,1,1}^{(0,q-k)} \Qu^{(1, k+1)} +2\Qa^{(0,q-k)}\Qu^{(1, k+2)}\Big]	+\Qu^{(0, q+4)}+ \sum_{k=0}^q{q \choose k} \Big[ (\Qa^2 - 2\Qa^{(1, 0)})^{(0,q-k)}\psi^{(0,k)} \\
		& - \Qa^{(0,q-k)}\psi^{(1, k)}- \Qb^{(0,q-k)}\psi^{(0, k+1)}\Big] +\psi^{(2, q)} - \psi^{(0, q+2)}, \quad  0 \le q \le M-4,
	\end{split}
	\ee
	where
	\be\label{u4q:xi}
	\begin{split}
		& \xi_{4,0,0,1}=2\Qa^{(1, 0)}\Qb-\Qa^2\Qb  + \Qb^{(0, 1)}\Qb + \Qa\Qb^{(1, 0)} + \Qb^{(0, 2)} - \Qb^{(2, 0)}, \qquad  \qquad
		\xi_{4,0,0,2}=2\Qa^{(1, 0)}-\Qa^2 + \Qb^2  + 2\Qb^{(0, 1)},\\
		& \xi_{4,0,1,0}= 3\Qa\Qa^{(1, 0)}-\Qa^3  + \Qa^{(0, 1)}\Qb + \Qa^{(0, 2)} - \Qa^{(2, 0)},\qquad \hspace{2.15cm} \xi_{4,0,1,1}=2\Qa\Qb + 2\Qa^{(0, 1)} - 2\Qb^{(1, 0)}.
	\end{split}
	\ee
	
	To derive a fourth-order compact 9-point FDM, we use the uniform Cartesian grid in the spatial  domain $\Omega=(0,1)^2$  as follows:
	\be \label{xiyj}
	x_i:=i h, \qquad i=0,\ldots,N_1, \qquad y_j:=j h, \qquad j=0,\ldots,N_1,  \qquad \text{and} \qquad h:=1/N_1,
	\ee
	for some integer $N_1\in \N$.
	To simplify the presentation, we adapt the following notations throughout the remainder of this paper:
	\be\label{mf:mn}
	\begin{split}
		&	\Qa^{(m,n)}:=\frac{\partial^{m+n} \Qa(x_i,y_j)}{ \partial^m x \partial^n y}, \qquad  \qquad 	\Qb^{(m,n)}:=\frac{\partial^{m+n} \Qb(x_i,y_j)}{ \partial^m x \partial^n y}, \\
		&	\Qu^{(m,n)}:=\frac{\partial^{m+n} \Qu(x_i,y_j)}{ \partial^m x \partial^n y}, \qquad  \qquad  \psi^{(m,n)}:=\frac{\partial^{m+n} \psi(x_i,y_j)}{ \partial^m x \partial^n y}.
	\end{split}
	\ee
The	Taylor approximation at the base point  $(x_i,y_j)$ and the definitions of $\ind_{M}, \ind_{M}^{1}, \ind_{M}^{2}$ in \eqref{ind} and \eqref{ind12} yield
	\begin{align}
		\Qu(x+x_i,y+y_j)
		=&
		\sum_{(m,n)\in \ind_{M}}
		\Qu^{(m,n)}\frac{x^my^n}{m!n!}+\bo(h^{M+1}) \label{u:original:0}\\
		=&
		\sum_{(m,n)\in \ind^1_{M}}
		\Qu^{(m,n)}\frac{x^my^n}{m!n!}+\sum_{(p,q)\in \ind^2_{M}}
		\Qu^{(p,q)}\frac{x^py^q}{p!q!}+\bo(h^{M+1}), \label{u:original}
	\end{align}
	where $x,y\in [-h,h]$.
	From \eqref{u:pq}, we see that
	\be\label{sum:pq:2}
	\begin{split}
		\sum_{(p,q)\in \ind^2_{M}}
		\Qu^{(p,q)}\frac{x^py^q}{p!q!}=&\sum_{(p,q)\in \ind^2_{M}} \Bigg[ \sum_{(m,n)\in \ind_{p+q}^{1} } \xi_{p,q,m,n} \Qu^{(m,n)}+\sum_{(m,n)\in \ind_{p+q-2} } \eta_{p,q,m,n}   \psi^{(m,n)} \Bigg] \frac{x^py^q}{p!q!}\\
		=&\mathop{   \sum_{p=2,q=0}}_{p+q\le M}^{M}  \Bigg[ \sum_{n=0}^{p+q} \xi_{p,q,0,n} \Qu^{(0,n)}+\sum_{n=0}^{p+q-1} \xi_{p,q,1,n} \Qu^{(1,n)}+\mathop{   \sum_{m,n=0}}_{m+n\le p+q-2}^{p+q-2} \eta_{p,q,m,n}   \psi^{(m,n)} \Bigg] \frac{x^py^q}{p!q!}.
	\end{split}
	\ee
	Clearly,
	\be\label{part:1}
	\mathop{   \sum_{p=2,q=0}}_{p+q\le M}^{M}  \Bigg[ \sum_{n=0}^{p+q} \xi_{p,q,0,n} \Qu^{(0,n)} \Bigg] \frac{x^py^q}{p!q!} =  \sum_{n=0}^{M} \Bigg[\mathop{   \sum_{p=2,q=0}}_{n \le p+q\le M}^{M} \xi_{p,q,0,n} \frac{x^py^q}{p!q!} \Bigg]\Qu^{(0,n)},
	\ee
	\be\label{part:2}
	\mathop{   \sum_{p=2,q=0}}_{p+q\le M}^{M}  \Bigg[ \sum_{n=0}^{p+q-1} \xi_{p,q,1,n} \Qu^{(1,n)}  \Bigg]  \frac{x^py^q}{p!q!} =  \sum_{n=0}^{M-1} \Bigg[\mathop{   \sum_{p=2,q=0}}_{n+1 \le p+q\le M}^{M} \xi_{p,q,1,n} \frac{x^py^q}{p!q!} \Bigg]\Qu^{(1,n)}, 
	\ee
	\be\label{part:3}
	\mathop{   \sum_{p=2,q=0}}_{p+q\le M}^{M}  \Bigg[  \mathop{   \sum_{m,n=0}}_{m+n\le p+q-2}^{p+q-2} \eta_{p,q,m,n}   \psi^{(m,n)}  \Bigg]  \frac{x^py^q}{p!q!}=\mathop{   \sum_{m,n=0}}_{m+n\le M-2}^{M-2} \Bigg[ \mathop{   \sum_{p=2,q=0}}_{m+n+2 \le p+q\le M}^{M}    \eta_{p,q,m,n}    \frac{x^py^q}{p!q!}\Bigg]\psi^{(m,n)}. 
	\ee
	Plugging \eqref{part:1}--\eqref{part:3} into \eqref{sum:pq:2} gives
	\be\label{sum:pq:2:simplied}
	\begin{split}
		\sum_{(p,q)\in \ind^2_{M}}
		\Qu^{(p,q)}\frac{x^py^q}{p!q!}=&\sum_{(m,n)\in \ind^1_{M}} \Big[\sum_{(p,q)\in \ind^2_{M} \setminus \ind^2_{m+n-1} } \xi_{p,q,m,n} \frac{x^py^q}{p!q!} \Big]	\Qu^{(m,n)}\\
		&+\sum_{(m,n)\in \ind_{M-2}} \Big[\sum_{(p,q)\in \ind^2_{M} \setminus \ind^2_{m+n+1} } \eta_{p,q,m,n} \frac{x^py^q}{p!q!} \Big]	\psi^{(m,n)}.
	\end{split}
	\ee
Then,	we substitute \eqref{sum:pq:2:simplied} into \eqref{u:original},  
	\be \label{u:GH}
	\Qu(x+x_i,y+y_j)
	=
	\sum_{(m,n)\in \ind^1_{M}}
	\Qu^{(m,n)}G_{M,m,n}(x,y)+\sum_{(m,n)\in \ind_{M-2}}	\psi^{(m,n)} H_{M,m,n}(x,y)+\bo(h^{M+1}),
	\ee
	where  $x,y\in [-h,h]$, and
	\be\label{GHmn}
	\begin{split}
		& G_{M,m,n}(x,y):=\frac{x^my^n}{m!n!}+\sum_{(p,q)\in \ind^2_{M} \setminus \ind^2_{m+n-1} } \xi_{p,q,m,n} \frac{x^py^q}{p!q!}, \qquad  H_{M,m,n}(x,y):=\sum_{(p,q)\in \ind^2_{M} \setminus \ind^2_{m+n+1} } \eta_{p,q,m,n} \frac{x^py^q}{p!q!},
	\end{split}
	\ee
	where the expressions of  $\xi_{p,q,m,n}$ and  $\eta_{p,q,m,n}$ are provided in \eqref{xi:and:eta}. Finally, we use \eqref{u:GH} to construct the fourth-order compact 9-point FDMs for \eqref{Model：linear:Elliptic} in the following \cref{subsec:elliptic}.
	\subsection{Fourth-order compact 9-point FDMs }\label{subsec:elliptic}
	Recall that $\Qu$ denotes the exact solution of \eqref{Model：linear:Elliptic}, and the grid point $(x_i,y_j)$ with the uniform mesh size $h$ is defined in \eqref{xiyj}.
	We define  $\Qu_h$ as the numerical solution computed by the FDM,  $\Qu_{i,j}=\Qu(x_i,y_j)$, and $(\Qu_h)_{i,j}$ is the value of $\Qu_h$ at $(x_i,y_j)$.
	The fourth-order compact 9-point FDM at the grid point $(x_i,y_j)$  is expressed as
	\be\label{L:h:u:h}
	\begin{split}
		h^{-2}	\mathcal{L}_h \Qu_h:= h^{-2} \sum_{k=-1}^{1} \sum_{\ell=-1}^{1} C_{k,\ell} (\Qu_h)_{i+k,j+\ell}=F_{i,j},
	\end{split}
	\ee
	with
	\be\label{Ckl:1}
	C_{k,\ell}:=\sum_{p=0}^{M+1}c_{k,\ell,p} h^p, \quad c_{k,\ell,p}\in S,\quad M\in \N_0,
	\ee
	where $S$ is defined in \eqref{Space:S}. The coefficients $\{ c_{k,\ell,p} \}_{k,\ell=-1,0,1,p=0,\dots,M+1}$ and the right-hand side $F_{i,j}$ are determined next via relations \eqref{L:h:u}--\eqref{Ckl:elliptic:special}. We restrict each $ c_{k,\ell,p}\in S$ such that the high-order partial derivatives $\Qa^{(m,n)}$ and $\Qb^{(m,n)}$ do not appear in the denominator, ensuring that each $c_{k,\ell,p}$ is well defined. We say that the set of nine elements $\{C_{k,\ell} \}_{k,\ell=-1,0,1}$ is nontrivial if $C_{k,\ell}|_{h=0}\ne0 $  for at least some $k,\ell=-1,0,1$. \\
	{\bf{ The algorithm to derive the fourth-order  compact 9-point FDMs:}}
	We define the linear operator $\mathcal{L}_h \Qu$
	\be\label{L:h:u}
	\begin{split}
		& h^{-2}	\mathcal{L}_h \Qu:= h^{-2} \sum_{k=-1}^{1} \sum_{\ell=-1}^{1} C_{k,\ell} \Qu(kh+x_i, \ell h+y_j).
	\end{split}
	\ee
	Plugging \eqref{u:GH} into  \eqref{L:h:u} 
	and replacing $M$ by $M+1$ generate 
	\begin{align}\label{Lh:u}
		h^{-2}	\mathcal{L}_h \Qu   = &	 h^{-2}  \sum_{k=-1}^{1} \sum_{\ell=-1}^{1} C_{k,\ell}	\sum_{(m,n)\in \ind^1_{M+1}}
		\Qu^{(m,n)}G_{M+1,m,n}(kh,\ell h) \notag \\
		&+h^{-2}  \sum_{k=-1}^{1} \sum_{\ell=-1}^{1} C_{k,\ell}\sum_{(m,n)\in \ind_{M-1}}	\psi^{(m,n)} H_{M+1,m,n}(kh,\ell h)+\bo(h^{M}), \notag  \\
		=& h^{-2} \sum_{(m,n)\in \ind_{M+1}^{1}} \Qu^{(m,n)} I_{M+1,m,n} +h^{-2} \sum_{(m,n) \in \ind_{	 M-1}} \psi^{(m,n)} J_{M-1,m,n}   +\bo(h^{M}),
	\end{align}
	where
	\be\label{Imn}
	\begin{split}
		I_{M+1,m,n}:=\sum_{k=-1}^{1} \sum_{\ell=-1}^{1} C_{k,\ell}G_{M+1,m,n}  (kh, \ell h),\qquad J_{M-1,m,n}:= \sum_{k=-1}^{1} \sum_{\ell=-1}^{1}	 C_{k,\ell} H_{M+1,m,n}  (kh, \ell h),
	\end{split}
	\ee	
	with $G_{M+1,m,n}(x,y)$ and  $H_{M+1,m,n}(x,y)$ defined by \eqref{GHmn}.  
	Owing to \eqref{L:h:u:h} and \eqref{Lh:u}, the local spatial truncation error
	\be\label{L:h:u:uh} 
	h^{-2}	\mathcal{L}_h (\Qu-\Qu_h)=	 h^{-2} \sum_{(m,n)\in \ind_{M+1}^{1}} \Qu^{(m,n)} I_{M+1,m,n} +h^{-2} \sum_{(m,n) \in \ind_{	 M-1}} \psi^{(m,n)} J_{M-1,m,n}   -F_{i,j} +\bo(h^{M}).
	\ee
	Let
	\be \label{Fij}
	F_{i,j}:=\text{the terms of } \Big(h^{-2} \sum_{(m,n) \in \ind_{	 M-1}} \psi^{(m,n)} J_{M-1,m,n}\Big) \text{ with degree}   \le M-1 \text{ in } h,
	\ee
	where $J_{M-1,m,n}$ is defined in \eqref{Imn} with  $H_{M+1,m,n}(x,y)$ in \eqref{GHmn}.   Then \eqref{L:h:u:uh}  leads to
	\be\label{L:h:u:uh:2} 
	h^{-2}	\mathcal{L}_h (\Qu-\Qu_h)=	 h^{-2} \sum_{(m,n)\in \ind_{M+1}^{1}} \Qu^{(m,n)} I_{M+1,m,n} +\bo(h^{M}).
	\ee
	By \eqref{ind}, \eqref{ind12}, and \eqref{GHmn}, we have that 
	smallest degrees of $h$ among the nonzero terms in $G_{M+1,m,n}  (kh, \ell h)$ and  $H_{M+1,m,n}  (kh, \ell h)$ are 0 and 2, respectively. So, \eqref{Ckl:1}, \eqref{Imn}, and \eqref{Fij} imply that  the smallest degree of $h$  among the nonzero terms in both $I_{M+1,m,n}$ and $F_{i,j}$ is 0.

	Now, if  $C_{k,\ell}$ in \eqref{Ckl:1},   $I_{M+1,m,n}$ in \eqref{Imn}, and $F_{i,j}$ in  \eqref{Fij} satisfy
	\be \label{EQ:1:explicit}
	I_{M+1,m,n}=\sum_{k=-1}^{1} \sum_{\ell=-1}^{1} C_{k,\ell}G_{M+1,m,n}  (kh, \ell h)=\bo(h^{M+2}) \quad \mbox{for all}  \quad (m,n)\in \ind_{M+1}^1, 
	\ee
	and
	\be \label{EQ:2:explicit}
	C_{0,0}|_{h=0}\ne 0,  \qquad  F_{i,j}|_{h=0}=\psi,
	\ee
	where $G_{M+1,m,n}(x,y)$ is defined in \eqref{GHmn},
	then \eqref{L:h:u:uh:2} generates
	\be\label{elliptic:order:M}
	h^{-2}	\mathcal{L}_h (\Qu- \Qu_h)=\bo(h^{M}),
	\ee
	and
	$\mathcal{L}_h \Qu_h$ approximates $	\Delta \Qu  +\Qa \Qu_x +\Qb \Qu_y   = \psi$ in \eqref{Model：linear:Elliptic} with  the consistency order $M$ at the grid point $(x_i,y_j)$.

	By the definitions of $J_{M-1,m,n}$ and $F_{i,j}$  in \eqref{Imn} and \eqref{Fij}, $F_{i,j}$ can be obtained immediately if $\{c_{k,\ell,p}\}|_{k,\ell=-1,0,1}^{p=0,M+1}$ in \eqref{Ckl:1} are fixed. So the main task of deriving the high-order FDM is to find $\{c_{k,\ell,p}\}|_{k,\ell=-1,0,1}^{p=0,M+1}$ to satisfy \eqref{EQ:1:explicit} and \eqref{EQ:2:explicit}.
	Using the symbolic calculation in Maple, the largest $M$ such that the nontrivial $\{C_{k,\ell}\}_{k,\ell=-1,0,1}$ solving the corresponding linear system of \eqref{EQ:1:explicit} with the variables   $\{c_{k,\ell,p}\}|_{k,\ell=-1,0,1}^{p=0,M+1}$ is, $M=4$. Therefore,  the highest  consistency order
	of the compact 9-point FDM based on our technique using the uniform Cartesian mesh grid for $\Delta \Qu  +\Qa \Qu_x +\Qb \Qu_y   = \psi$, is 4.  Furthermore, there  exist nine coefficients in $\{C_{k,\ell}\}_{k,\ell=-1,0,1}$  satisfying \eqref{EQ:1:explicit} with $M=4$, for any free variables in the following set:
	\be\label{free:para}
	\{c_{0,0,0}, c_{0,0,1}, c_{-1,-1,2}, c_{1,1,2}, c_{0,1,3}, c_{1,0,3}, c_{1,0,2}, c_{1,1,3} \} \cup  \{ c_{k,\ell,p} \}_{k,\ell=-1,0,1}^{p=4,5}.
	\ee
	The symbolic calculation in Maple also reveals that: if we fix $c_{0,0,0}=-10/3$, $c_{0,0,1}=c_{-1,-1,2}=c_{1,1,2}=c_{0,1,3}=c_{1,0,3}=0$,  $\{ c_{k,\ell,p} \}_{k,\ell=-1,0,1,p=4,5}=\{0\}$, $c_{1,0,2}=(\Qa^2 +\Qa\Qb +  \Qa^{(0, 1)} +\Qa^{(1, 0)}+\Qb^{(1, 0)} - \Qb^{(0, 1)})/12$, and $c_{1,1,3}=\Qa^{(0, 1)}\Qb /24$ in \eqref{free:para}, then there exist the unique $\{ C_{k,\ell}\}_{k,\ell=-1,0,1}$ and $F_{i,j}$ fulfilling \eqref{EQ:1:explicit} and \eqref{EQ:2:explicit} with $M=4$ as follows  (every free variable  $c_{k,\ell,p}$ in \eqref{free:para} is chosen to ensure \eqref{EQ:2:explicit} and  make the expression of  $C_{k,\ell}$  concise):
	\begin{align}\label{Ckl:elliptic:general}
		& C_{-1,-1}= \tfrac{1}{6} - \tfrac{r_1h}{12}+ ( r_3\Qa - (2\Qa^{(0, 1)}+ \Qa^{(1, 0)})\Qb +  \Delta r_1 ) \tfrac{h^3}{24},\notag \\
		&  C_{-1,0}=  \tfrac{2}{3} - \tfrac{\Qa h}{3}+(\Qa^2 + \Qa\Qb + r_2 +r_3) \tfrac{h^2}{12} + (r_2\Qb - r_3\Qa -\Delta r_1 ) \tfrac{h^3}{12}, \notag\\
		& C_{-1,1}= \tfrac{1}{6} - \tfrac{r_5h}{12} -(\Qa\Qb +r_4) \tfrac{h^2}{12} + (\Qa\Qb^{(1, 0)}-r_2\Qb  + \Delta \Qb)\tfrac{h^3}{24}, \notag\\
		& C_{0,-1}=\tfrac{2}{3}-  \tfrac{\Qb h}{3}+(\Qa\Qb + \Qb^2 +r_6+r_7)  \tfrac{h^2}{12}  + (r_2\Qb -r_3\Qa  - \Delta r_1  )   \tfrac{h^3}{12},\notag \\
		& C_{0,0}=-\tfrac{10}{3} -(\Qa^2 +\Qa\Qb +\Qb^2 +r_4)  \tfrac{h^2}{6}  + (r_3\Qa  - r_2\Qb +\Delta r_1  )   \tfrac{h^3}{12}, \\
		& C_{0,1}=\tfrac{2}{3}+  \tfrac{\Qb h}{3}+(\Qa\Qb + \Qb^2 + r_6+r_7)  \tfrac{h^2}{12},\notag\\
		& C_{1,-1}= \tfrac{1}{6} + \tfrac{r_5 h}{12} -(\Qa\Qb +r_4) \tfrac{h^2}{12} + (\Delta \Qa-\Qa\Qb^{(0, 1)} )\tfrac{h^3}{24}, \notag\\
		& C_{1,0}=\tfrac{2}{3}+  \tfrac{\Qa h}{3}+(\Qa^2 +\Qa\Qb +  r_2+r_3)  \tfrac{h^2}{12},\notag\\
		& C_{1,1}= \tfrac{1}{6} + \tfrac{r_1 h}{12} +\Qa^{(0, 1)}\Qb \tfrac{h^3}{24},\notag
	\end{align}
	where 
	\be \label{r1:r7}
	\begin{split}
		& r_1=\Qa+\Qb, \qquad r_2=\Qa^{(0, 1)} +\Qa^{(1, 0)}, \qquad r_3=\Qb^{(1, 0)} - \Qb^{(0, 1)},\qquad r_4=\Qa^{(0, 1)} +\Qb^{(1, 0)},\\
		& r_5=\Qa-\Qb, \qquad r_6=\Qa^{(0, 1)} -\Qa^{(1, 0)}, \qquad r_7=\Qb^{(1, 0)} + \Qb^{(0, 1)},
	\end{split}
	\ee
	and
	\be \label{Fij:elliptic:general}
	F_{i,j}:=\psi-( (\Qa^{(1, 0)} + \Qb^{(0, 1)})\psi - \Qa\psi^{(1, 0)} - \Qb\psi^{(0, 1)} - \Delta \psi ) \tfrac{h^2}{12}.
	\ee

	In particular, if $\Qa=\Qb$ (e.g, if $\kappa=$ constant and $\alpha(u)=\beta(u)=u^2/2$, then \eqref{QuQaQb} yields the steady viscous Burgers equation), we obtain a more concise expression by the following steps:
	\eqref{EQ:1:explicit} and \eqref{EQ:2:explicit} with $\Qa=\Qb$, $M=4$, $c_{0,0,0}=-10/3$, $c_{-1,1,1}=c_{0,0,1}=c_{1,-1,1}=c_{-1,-1,2}=c_{1,1,2}=c_{0,1,3}=c_{1,0,3}=c_{1,1,3}=0$,  $\{ c_{k,\ell,p} \}_{k,\ell=-1,0,1,p=4,5}=\{0\}$, and $c_{1,0,2}=(\Qa^2+\Qa^{(1,0)})/6$  result in
	\be\label{Ckl:elliptic:special}
	\begin{split}
		& C_{-1,-1}=  \tfrac{1}{6} - \tfrac{\Qa h}{6}+  \tfrac{\Delta \Qa h^3}{12}, \qquad \qquad C_{-1,0}=  \tfrac{2}{3} - \tfrac{\Qa h}{3}+ (\Qa^2+\Qa^{(1,0)})\tfrac{ h^2}{6}- \tfrac{\Delta \Qa h^3}{6},   \\
		&  C_{-1, 1}=  \tfrac{1}{6} - (\Qa^2+\Qa^{(0, 1)} +\Qa^{(1, 0)})\tfrac{ h^2}{12}+  \tfrac{\Delta \Qa h^3}{24},\qquad \qquad  C_{0,-1}=  \tfrac{2}{3} - \tfrac{\Qa h}{3}+ (\Qa^2+\Qa^{(0,1)})\tfrac{ h^2}{6}- \tfrac{\Delta \Qa h^3}{6},  \\
		&   C_{0, 0}=  -\tfrac{10}{3} - (3\Qa^2+\Qa^{(0, 1)} +\Qa^{(1, 0)})\tfrac{ h^2}{6}+  \tfrac{\Delta \Qa h^3}{6},\qquad \qquad C_{0,1}=  \tfrac{2}{3} + \tfrac{\Qa h}{3}+ (\Qa^2+\Qa^{(0,1)})\tfrac{ h^2}{6},\\  & C_{1,-1}=C_{-1,1}, \qquad \qquad C_{1,0}=  \tfrac{2}{3} + \tfrac{\Qa h}{3}+ (\Qa^2+\Qa^{(1,0)})\tfrac{ h^2}{6}, \qquad \qquad C_{1,1}= \tfrac{1}{6} + \tfrac{\Qa h}{6},
	\end{split}
	\ee
	and $F_{i,j}$ is obtained by replacing $\Qb$ by $\Qa$ in \eqref{Fij:elliptic:general}.

	Note that  the nine coefficients in $\{ C_{k,\ell}\}_{k,\ell=-1,0,1}$  satisfying \eqref{EQ:1:explicit}--\eqref{EQ:2:explicit} with $M=4$ are not unique. We choose each free variable $c_{k,\ell,p}$ to endow every $C_{k,\ell}$ with the simplest explicit expression in \eqref{Ckl:elliptic:general} and \eqref{Ckl:elliptic:special}.  The coefficients in \eqref{Ckl:elliptic:general} with ${\textup \Qa}={\textup\Qb}$ can also give the left-hand side of a fourth-order FDM for $\Delta {\textup \Qu}  +{\textup \Qa} {\textup \Qu}_x +{\textup \Qa} {\textup \Qu}_y   = \psi$, but the corresponding nine coefficients $\{C_{k,\ell}\}_{k,\ell=-1,0,1}$ are not of the simplest forms based on our technique in \cref{subsec:tech}.

	Summarizing the above relations \eqref{L:h:u:h}--\eqref{Ckl:elliptic:special},  the following  \cref{thm:elliptic:special,thm:elliptic:general} provide the fourth-order compact 9-point FDMs for $	\Delta \Qu  +\Qa \Qu_x +\Qa \Qu_y   = \psi$ and $\Delta \Qu  +\Qa \Qu_x +\Qb \Qu_y   = \psi$, respectively. 
	\begin{theorem}\label{thm:elliptic:special}
		Assume $\kappa, u, \alpha=\beta, f$ are all smooth, and $\kappa_x=\kappa_y$ in \eqref{Model：Original:Elliptic},  $\mathcal{L}_h {\textup \Qu}_h$ is defined in \eqref{L:h:u:h}, 
the	nine coefficients $\{ C_{k,\ell}\}_{k,\ell=-1,0,1}$ are defined in \eqref{Ckl:elliptic:special}, the right-hand side $F_{i,j}$ is obtained by replacing ${\textup \Qb}$ by ${\textup \Qa}$ in \eqref{Fij:elliptic:general}, and  all the functions in $\mathcal{L}_h {\textup \Qu}_h$ are evaluated at the grid point $(x_i,y_j)$ in \eqref{xiyj}.  Then $h^{-2}	\mathcal{L}_h  {\textup \Qu}_h$
		approximates $	\Delta {\textup \Qu}  +{\textup \Qa} {\textup \Qu}_x +{\textup \Qb} {\textup \Qu}_y   = \psi$ in \eqref{Model：linear:Elliptic} with  the fourth-order of consistency at  $(x_i,y_j)$ for the case ${\textup \Qa}={\textup \Qb}$, where  $\psi, {\textup \Qa},{\textup \Qb}, {\textup \Qu}$ are defined in \eqref{a:b:psi} and \eqref{QuQaQb}.	
	\end{theorem}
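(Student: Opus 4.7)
The plan is to reduce the theorem to the verification of the two algebraic conditions \eqref{EQ:1:explicit} and \eqref{EQ:2:explicit} with $M=4$, since these were shown in the preceding development to yield the consistency estimate \eqref{elliptic:order:M}. Once both are checked for the specific stencil \eqref{Ckl:elliptic:special} (with $\Qb$ replaced by $\Qa$ in $F_{i,j}$ from \eqref{Fij:elliptic:general}), the identity \eqref{L:h:u:uh:2} with $M=4$ immediately gives $h^{-2}\mathcal{L}_h(\Qu-\Qu_h)=\bo(h^4)$ at the grid point $(x_i,y_j)$, which is exactly the claimed fourth-order consistency.

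First, condition \eqref{EQ:2:explicit} is essentially free: inspecting \eqref{Ckl:elliptic:special} gives $C_{0,0}|_{h=0}=-10/3\neq 0$, and setting $h=0$ in \eqref{Fij:elliptic:general} with $\Qb=\Qa$ yields $F_{i,j}|_{h=0}=\psi$. The substantive step is to verify \eqref{EQ:1:explicit} for every $(m,n)\in\ind^1_5$, i.e., for all pairs with $m\in\{0,1\}$ and $m+n\le 5$. For each such pair I assemble
\[
I_{5,m,n}=\sum_{k=-1}^{1}\sum_{\ell=-1}^{1}C_{k,\ell}\,G_{5,m,n}(kh,\ell h),
\]
where $G_{5,m,n}(x,y)$ is the polynomial given by \eqref{GHmn} with the coefficients $\xi_{p,q,m,n}$ from \eqref{u:pq} specialized to $\Qa=\Qb$, and then expand $I_{5,m,n}$ as a power series in $h$ to confirm that all coefficients of $h^0,\ldots,h^5$ vanish identically in the derivative symbols $\{\Qa^{(i,j)}\}$.

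The main obstacle is combinatorial rather than conceptual. The polynomials $G_{5,m,n}$ are generated by up to five levels of the recursion \eqref{u:m2n}--\eqref{u:pq:step1}, so the $\xi_{p,q,m,n}$ are multivariate polynomials in $\{\Qa^{(i,j)}\}$ (cf.\ \eqref{xi:and:eta}) with many terms. Consequently each $I_{5,m,n}$, fully expanded, contains a large number of monomials in the derivative symbols, and the cancellations that force $I_{5,m,n}=\bo(h^6)$ can only be verified cleanly by symbolic computation. I would therefore carry this step out in Maple, precisely the tool the authors indicate was used to discover the free-parameter choices in \eqref{free:para} that produce \eqref{Ckl:elliptic:special}. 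Having confirmed \eqref{EQ:1:explicit} this way, and noting that $C_{0,0}|_{h=0}\neq 0$ also ensures the stencil is nontrivial in the sense prescribed after \eqref{Ckl:1}, invoking \eqref{L:h:u:uh:2} with $M=4$ concludes that $h^{-2}\mathcal{L}_h \Qu_h$ approximates $\Delta \Qu+\Qa \Qu_x+\Qa \Qu_y=\psi$ to fourth-order consistency at $(x_i,y_j)$, as claimed.
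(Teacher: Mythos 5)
Your proposal is sound and is, in fact, the route the paper's proof acknowledges in its first sentence (``this result can be proved directly'' from the construction \eqref{L:h:u:h}--\eqref{Ckl:elliptic:special}): reduce everything to checking \eqref{EQ:1:explicit} and \eqref{EQ:2:explicit} with $M=4$ and then invoke \eqref{L:h:u:uh:2}. However, the proof the paper actually writes out takes a different and more self-contained path. Instead of verifying $I_{5,m,n}=\bo(h^{6})$ for all $(m,n)\in\ind_5^1$ by symbolic computation inside the framework of \cref{subsec:tech}, the authors Taylor-expand $\mathcal{L}_h\Qu=\sum_{k,\ell}C_{k,\ell}\Qu(kh+x_i,\ell h+y_j)$ directly via \eqref{u:original:0} with $M=5$, and then eliminate every derivative of $\Qu$ from the resulting $h^2$ and $h^4$ coefficients using only three identities obtained by differentiating the PDE (\eqref{first:eq:proof}, its first-order consequence \eqref{second:eq:proof}, and the second-order consequence \eqref{third:eq:proof}), arriving at $\mathcal{L}_h\Qu=h^2\psi+\tfrac{h^4}{12}[\Qa(\psi^{(1,0)}+\psi^{(0,1)})-(\Qa^{(1,0)}+\Qa^{(0,1)})\psi+\Delta\psi]+\bo(h^6)=h^2F_{i,j}+\bo(h^6)$. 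That buys a hand-checkable argument that does not rely on the recursion \eqref{u:m2n}--\eqref{u:pq}, the expansion \eqref{u:GH}, or Maple; by contrast your route inherits the full machinery of \cref{subsec:tech} plus a computer-algebra verification, which is shorter to state but not independently auditable. One point you should make explicit if you pursue your route: the theorem hands you $F_{i,j}$ as the closed formula obtained from \eqref{Fij:elliptic:general} with $\Qb=\Qa$, whereas \eqref{L:h:u:uh:2} is derived with $F_{i,j}$ defined by \eqref{Fij}; you must also confirm (again symbolically) that these two expressions coincide for the coefficients \eqref{Ckl:elliptic:special}, since checking only $F_{i,j}|_{h=0}=\psi$ controls the $h^0$ term but not the $h^2$ term of the right-hand side. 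The paper's expansion settles this automatically because the $h^4$ coefficient it computes is exactly the $h^2$ part of \eqref{Fij:elliptic:general}.
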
	
	\begin{proof}
		From the derivations of $\{ C_{k,\ell}\}_{k,\ell=-1,0,1}$ and $F_{i,j}$ in identities \eqref{L:h:u:h}--\eqref{Ckl:elliptic:special}, this result can be proved directly. For the readers' convenience and to make our FDM more rigorous, we also provide the proof in the standard perspective as follows.  
		From \eqref{Model：linear:Elliptic} with $\Qa=\Qb$ and \eqref{mf:mn}, we have
		\be\label{first:eq:proof}
		\Delta \Qu+(\Qu^{(1, 0)}+\Qu^{(0, 1)} )\Qa = \psi.
		\ee
		Taking the first-order partial derivatives of \eqref{first:eq:proof} with respect to $x$ and $y$ yields 
		\be\label{eq:1x}
		\Delta \Qu^{(1,0)}+(\Qu^{(2, 0)}+\Qu^{(1, 1)} )\Qa+(\Qu^{(1, 0)}+\Qu^{(0, 1)} )\Qa^{(1,0)} = \psi ^{(1,0)},
		\ee
		and
		\be\label{eq:1y}
		\Delta \Qu^{(0,1)}+(\Qu^{(1, 1)}+\Qu^{(0, 2)} )\Qa+(\Qu^{(1, 0)}+\Qu^{(0, 1)} )\Qa^{(0,1)}  = \psi^{(0,1)},
		\ee
		so that
		\eqref{eq:1x}+\eqref{eq:1y} gives
		\be\label{second:eq:proof}
		\Delta (\Qu^{(1,0)}+\Qu^{(0,1)})=-(\Delta \Qu+2\Qu^{(1, 1)} )\Qa-(\Qu^{(1, 0)}+\Qu^{(0, 1)} )(\Qa^{(1,0)}+\Qa^{(0,1)})+\psi ^{(1,0)}+\psi^{(0,1)}. 
		\ee
	Now, we	take the first-order partial derivatives of \eqref{eq:1x} and \eqref{eq:1y}  with respect to $x$ and $y$, respectively,
		\be\label{eq:2x}
		\Qu^{(4, 0)} +\Qu^{(2, 2)}+(\Qu^{(3, 0)}+\Qu^{(2, 1)} )\Qa+2(\Qu^{(2, 0)}+\Qu^{(1, 1)} )\Qa^{(1,0)}+(\Qu^{(1, 0)}+\Qu^{(0, 1)} )\Qa^{(2,0)} = \psi ^{(2,0)},
		\ee
		\be\label{eq:2y}
		\Qu^{(2, 2)} +\Qu^{(0, 4)}+(\Qu^{(1, 2)}+\Qu^{(0, 3)} )\Qa+2(\Qu^{(1, 1)}+\Qu^{(0, 2)} )\Qa^{(0,1)}+(\Qu^{(1, 0)}+\Qu^{(0, 1)} )\Qa^{(0,2)}  = \psi^{(0,2)}, 
		\ee
		hence
		\eqref{eq:2x}+\eqref{eq:2y} gives
		\be\label{third:eq:proof}
		\begin{split}
			\Delta^2 \Qu+(\Qu^{(1, 0)}+\Qu^{(0, 1)} )\Delta \Qa+2\Qu^{(1, 1)} (\Qa^{(0,1)}+\Qa^{(1,0)})=&-  \Qa\Delta (\Qu^{(1,0)}+\Qu^{(0,1)})  -2\Qu^{(2, 0)}\Qa^{(1,0)}\\
			&-2\Qu^{(0, 2)} \Qa^{(0,1)}  +  \Delta \psi.
		\end{split}
		\ee
		Next, we
		plug $C_{k,\ell}$ of \eqref{Ckl:elliptic:special} into $\mathcal{L}_h \Qu$ defined in \eqref{L:h:u}, and apply \eqref{u:original:0} with $M=5$. After a direct simplification, we obtain
		\[
		\begin{split} 
			\mathcal{L}_h \Qu=&	\sum_{k=-1}^1 \sum_{\ell=-1}^1  C_{k,\ell}  \Qu(kh+x_i, \ell h+y_j)	\\
			=& h^2[\Delta \Qu+\Qa(\Qu^{(1, 0)}+\Qu^{(0, 1)} )]+\tfrac{ h^4}{12} [ \Qa^2 \Delta \Qu +2\Qa^2 \Qu^{(1, 1)} + 2\Qa(\Delta \Qu^{(1,0)} + \Delta \Qu^{(0,1)}) \\
			&  + \Delta^2 \Qu  + (\Qu^{(0, 1)} + \Qu^{(1, 0)})\Delta \Qa+ 2(\Qa^{(0, 1)} + \Qa^{(1, 0)})\Qu^{(1, 1)} + (\Qu^{(2, 0)} - \Qu^{(0, 2)})(\Qa^{(1, 0)}-\Qa^{(0, 1)} ) ]+\bo(h^6).
		\end{split}	
		\]
		Using \eqref{first:eq:proof}, \eqref{second:eq:proof}, and \eqref{third:eq:proof}, after algebraic manipulation, we have
		\begin{align*} 
			\mathcal{L}_h \Qu
			=& h^2 \psi+\tfrac{ h^4}{12} [\Qa^2 \Delta \Qu+2\Qa^2 \Qu^{(1, 1)} -2\Qa^2 \Delta \Qu -4\Qa^2\Qu^{(1, 1)}-2\Qa (\Qu^{(1, 0)}+\Qu^{(0, 1)} )(\Qa^{(1,0)}+\Qa^{(0,1)})\\
			& +2\Qa(\psi ^{(1,0)}+\psi^{(0,1)}) -  \Qa\Delta (\Qu^{(1,0)}+\Qu^{(0,1)})  -2\Qu^{(2, 0)}\Qa^{(1,0)}-2\Qu^{(0, 2)} \Qa^{(0,1)}  \\
			& +  \Delta \psi+ (\Qu^{(2, 0)} - \Qu^{(0, 2)})(\Qa^{(1, 0)}-\Qa^{(0, 1)} )] +\bo(h^6),\hspace{6cm} \text{by}\ \eqref{first:eq:proof}, \eqref{second:eq:proof}, \eqref{third:eq:proof},\\
			=& h^2 \psi+\tfrac{ h^4}{12} [-\Qa^2 \Delta \Qu -2\Qa^2\Qu^{(1, 1)} -2\Qa (\Qu^{(1, 0)}+\Qu^{(0, 1)} )(\Qa^{(1,0)}+\Qa^{(0,1)})+2\Qa(\psi ^{(1,0)}+\psi^{(0,1)})\\
			&  +\Qa^2 \Delta \Qu+2\Qa^2\Qu^{(1, 1)}+\Qa(\Qu^{(1, 0)}+\Qu^{(0, 1)} )(\Qa^{(1,0)}+\Qa^{(0,1)})-\Qa(\psi ^{(1,0)}+\psi^{(0,1)})   \\
			&  -2\Qu^{(2, 0)}\Qa^{(1,0)}-2\Qu^{(0, 2)} \Qa^{(0,1)}+  \Delta \psi+ (\Qu^{(2, 0)} - \Qu^{(0, 2)})(\Qa^{(1, 0)}-\Qa^{(0, 1)} )] +\bo(h^6),\hspace{2.2cm} \text{by}\ \eqref{second:eq:proof},\\
			=& h^2 \psi+\tfrac{ h^4}{12} [ -\Qa (\Qu^{(1, 0)}+\Qu^{(0, 1)} )(\Qa^{(1,0)}+\Qa^{(0,1)})+\Qa(\psi ^{(1,0)}+\psi^{(0,1)})   \\
			&  -2\Qu^{(2, 0)}\Qa^{(1,0)}-2\Qu^{(0, 2)} \Qa^{(0,1)}+  \Delta \psi+ (\Qu^{(2, 0)} - \Qu^{(0, 2)})(\Qa^{(1, 0)}-\Qa^{(0, 1)} )] +\bo(h^6),\hspace{2.23cm} \text{by simplification},\\
			=& h^2 \psi+\tfrac{ h^4}{12} [ -\Qa (\Qu^{(1, 0)}+\Qu^{(0, 1)} )(\Qa^{(1,0)}+\Qa^{(0,1)})+\Qa(\psi ^{(1,0)}+\psi^{(0,1)})   \\
			&  -\Qu^{(2, 0)}(\Qa^{(1, 0)}+\Qa^{(0, 1)} )-\Qu^{(0, 2)} (\Qa^{(1, 0)}+\Qa^{(0, 1)} ) +  \Delta \psi]+\bo(h^6),\hspace{4.3cm} \text{by simplification},\\
			=& h^2 \psi+\tfrac{ h^4}{12} [ -\Qa (\Qu^{(1, 0)}+\Qu^{(0, 1)} )(\Qa^{(1,0)}+\Qa^{(0,1)})+\Qa(\psi ^{(1,0)}+\psi^{(0,1)})   \\
			&  +(\Qa^{(1, 0)}+\Qa^{(0, 1)} )(\Qa(\Qu^{(1, 0)}+\Qu^{(0, 1)} ) - \psi) +  \Delta \psi]+\bo(h^6), \hspace{5.05cm} \text{by}\ \eqref{first:eq:proof}, \\
			=& h^2 \psi+\tfrac{ h^4}{12} [\Qa(\psi ^{(1,0)}+\psi^{(0,1)})   -(\Qa^{(1, 0)}+\Qa^{(0, 1)} ) \psi +  \Delta \psi]+\bo(h^6),\hspace{4.24cm}  \text{by simplification}.
		\end{align*} 
		Finally, \eqref{L:h:u:h} and \eqref{Fij:elliptic:general} with $\Qb=\Qa$ complete the argument.
	\end{proof}
	\begin{theorem}\label{thm:elliptic:general}
		Assume $\kappa, u, \alpha, \beta, f$ are all smooth  in \eqref{Model：Original:Elliptic},  $\mathcal{L}_h  {\textup \Qu}_h$ is defined in \eqref{L:h:u:h}, the
	nine coefficients $\{ C_{k,\ell}\}_{k,\ell=-1,0,1}$ are defined in \eqref{Ckl:elliptic:general}--\eqref{r1:r7}, the right-hand side  $F_{i,j}$ is defined in \eqref{Fij:elliptic:general}, and  all the functions in $\mathcal{L}_h  {\textup \Qu}_h$ are evaluated at the grid point $(x_i,y_j)$ in \eqref{xiyj}.  Then $h^{-2}	\mathcal{L}_h  {\textup \Qu}_h$
		approximates $	\Delta {\textup \Qu}  +{\textup \Qa} {\textup \Qu}_x +{\textup \Qb} {\textup \Qu}_y   = \psi$ in \eqref{Model：linear:Elliptic} with  the fourth-order of consistency at  $(x_i,y_j)$, where  $\psi, {\textup \Qa},{\textup \Qb}, {\textup \Qu}$ are defined in \eqref{a:b:psi} and \eqref{QuQaQb}.	
	\end{theorem}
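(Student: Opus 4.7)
The plan is to follow the strategy already carried out in the proof of Theorem \ref{thm:elliptic:special}, now without the simplifying assumption $\Qa=\Qb$. The cleanest high-level route is to observe that the coefficients $\{C_{k,\ell}\}_{k,\ell=-1,0,1}$ in \eqref{Ckl:elliptic:general}--\eqref{r1:r7} together with $F_{i,j}$ in \eqref{Fij:elliptic:general} were constructed in \cref{subsec:tech,subsec:elliptic} precisely so that the two conditions \eqref{EQ:1:explicit} and \eqref{EQ:2:explicit} hold with $M=4$ for arbitrary (possibly unequal) $\Qa,\Qb$. By the derivation culminating in \eqref{L:h:u:uh:2} and \eqref{elliptic:order:M}, this at once yields $h^{-2}\mathcal{L}_h(\Qu-\Qu_h)=\bo(h^{4})$, i.e., fourth-order consistency at $(x_i,y_j)$.

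To make the argument self-contained in the style of the proof of Theorem \ref{thm:elliptic:special}, I would first Taylor-expand $\Qu(kh+x_i,\ell h+y_j)$ through $\bo(h^{6})$ for the nine stencil points, substitute the coefficients $C_{k,\ell}$ from \eqref{Ckl:elliptic:general}, and collect the outcome by partial derivatives $\Qu^{(m,n)}$ with $m+n\le 5$. Next, I would derive the analogues of identities \eqref{first:eq:proof}--\eqref{third:eq:proof} by differentiating the PDE $\Delta\Qu+\Qa\Qu_x+\Qb\Qu_y=\psi$ once and twice in each of $x$ and $y$; these express $\Qu^{(p,q)}$ for $p\ge 2$, $p+q\le 4$, in terms of $\Qu^{(0,j)}, \Qu^{(1,j)}$ and derivatives of $\psi$ — this is the general case of \eqref{u:pq}--\eqref{xi:and:eta}, with $\Qa$ and $\Qb$ now kept distinct. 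Substituting these relations into the Taylor expansion and simplifying, the cancellations should collapse everything to $h^{2}F_{i,j}+\bo(h^{6})$ with $F_{i,j}$ as in \eqref{Fij:elliptic:general}; dividing by $h^{2}$ then completes the argument.

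The hard part will be bookkeeping. Unlike the $\Qa=\Qb$ case, the general setting produces many more cross terms of the form $\Qa^{(m,n)}\Qb^{(p,q)}$, so the sequence of simplifications that already filled several display lines in the proof of Theorem \ref{thm:elliptic:special} becomes considerably longer, and verifying that the combinations of $r_1,\ldots,r_7$ in \eqref{r1:r7} produce exactly the right cancellations is tedious. I would therefore rely on symbolic computation in Maple, as was used in \cref{subsec:elliptic} to construct $\{C_{k,\ell}\}$ and $F_{i,j}$ in the first place, to carry out the verification systematically; the logical structure of the proof, however, is identical to that of Theorem \ref{thm:elliptic:special}.
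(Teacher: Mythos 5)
Your proposal matches the paper's approach: the paper proves Theorem \ref{thm:elliptic:general} simply by noting it "follows similar arguments as in \cref{thm:elliptic:special}", and your two routes (appealing to the construction via \eqref{EQ:1:explicit}--\eqref{EQ:2:explicit} and \eqref{elliptic:order:M}, or redoing the Taylor-expansion/PDE-differentiation computation with $\Qa\neq\Qb$) are exactly the two perspectives the paper itself offers in the proof of the special case. The proposal is correct and takes essentially the same approach.
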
	
	\begin{proof}
		The proof follows similar arguments as in \cref{thm:elliptic:special}.
	\end{proof}

	From \eqref{EQ:1:explicit}--\eqref{free:para}, there  exist the nine coefficients $\big\{C_{k,\ell}=\sum_{p=0}^{M+1}c_{k,\ell,p} h^p : k,\ell=-1,0,1 \big\}$  fulfilling  \eqref{EQ:1:explicit} with $M=4$ for any free variables $c_{k,\ell,p}$ in \eqref{free:para}. Hence, we  use the free variables from \eqref{free:para} to reduce the pollution effects in the following \cref{sec:FDM:reduced}. 
	\subsection{Reduction of the pollution effects of $\bo(h^4)$ and $\bo(h^5)$}\label{sec:FDM:reduced}
	Recall that  $M=4$ is the largest integer such that the nontrivial $\{C_{k,\ell}\}_{k,\ell=-1,0,1}$ satisfying \eqref{EQ:1:explicit}, i.e.,
	the maximum  consistency order
	of the FDM in \cref{thm:elliptic:general} for $\Delta \Qu  +\Qa \Qu_x +\Qb \Qu_y   = \psi$ in \eqref{Model：linear:Elliptic}, is 4. In this section we use the free variables $c_{k,\ell,p}$ in \eqref{free:para} 
	to reduce the pollution effects of $\bo(h^4)$ and $\bo(h^5)$. Let us choose $M=6$ in \eqref{Ckl:1}, \eqref{Imn}, and \eqref{Fij}, so that
	\be\label{Ckl:degree:7}
	C_{k,\ell}:=\sum_{p=0}^{7}c_{k,\ell,p} h^p, \quad  c_{k,\ell,p}\in S, \qquad 	I_{7,m,n}:=\sum_{k=-1}^{1} \sum_{\ell=-1}^{1} C_{k,\ell}G_{7,m,n}  (kh, \ell h),
	\ee
	\be\label{Fij:order6}
	F_{i,j}=\text{the terms of } \Big( h^{-2} \sum_{(m,n) \in \ind_{	 5}} \psi^{(m,n)} \sum_{k=-1}^{1} \sum_{\ell=-1}^{1}	 C_{k,\ell} H_{7,m,n}  (kh, \ell h)\Big) \text{ with degree}   \le 5 \text{ in } h,
	\ee
	where $G_{7,m,n}(x,y)$ and  $H_{7,m,n}(x,y)$ are defined in \eqref{GHmn}, and $S$ is defined in \eqref{Space:S}. To maintain the fourth-order of consistency, and  reduce the truncation errors of $\bo(h^4)$ and $\bo(h^5)$, we
	choose $M=4$, and replace $I_{5,m,n}$ by  $I_{7,m,n}$ and $G_{5,m,n}$ by  $G_{7,m,n}$  in \eqref{EQ:1:explicit}--\eqref{EQ:2:explicit}. Using the symbolic computation from Maple, we verify that there exist $C_{k,\ell}=\sum_{p=0}^{7}c_{k,\ell,p} h^p$ with $k,\ell=-1,0,1$  satisfying
	\be\label{QE:order:4:reduce}
	I_{7,m,n}=\sum_{k=-1}^{1} \sum_{\ell=-1}^{1} C_{k,\ell}G_{7,m,n}  (kh, \ell h)=\bo(h^{6}) \quad \mbox{for all}  \quad (m,n)\in \ind_{5}^1,\quad C_{0,0}|_{h=0}\ne 0,  \quad F_{i,j}|_{h=0}=\psi,
	\ee
	for any free variables in the following set
	\be\label{free:parameter:1}
	\begin{split}
		& \{c_{-1,-1,p}\}_{p=6,7} \cup \{c_{-1,0,p}\}_{p=5,6,7}  \cup \{c_{-1,1,p}\}_{p=4,\dots,7} \cup \{c_{0,-1,p}\}_{p=5,6,7}  \\
		& \cup \{c_{0,0,p}\}_{p=4,\dots,7} \cup \{c_{0,1,p}\}_{p=3,\dots,7}   \cup \{c_{1,-1,p}\}_{p=3,\dots,7} \cup \{c_{1,0,p}\}_{p=2,\dots,7}  \cup \{c_{1,1,p}\}_{p=1,\dots,7}.
	\end{split}
	\ee
	Furthermore, we also observe that $\{ C_{k,\ell}\}_{k,\ell=-1,0,1}$ solving \eqref{QE:order:4:reduce} leads to
	\be\label{reduce:together}
	\begin{split}
		\sum_{(m,n)\in \ind_{7}^1}\Qu^{(m,n)}I_{7,m,n}&=\sum_{(m,n)\in \ind_{7}^1}\Qu^{(m,n)}\sum_{k=-1}^{1} \sum_{\ell=-1}^{1} C_{k,\ell}G_{7,m,n}  (kh, \ell h)\\
		&=\tfrac{h^6}{90}(\Qa^{(0,1)}-\Qb^{(1,0)})\Qu^{(1,3)}+C_6h^6+C_7h^7+\bo(h^8),
	\end{split}
	\ee
	where $C_6$ and $C_7$ depend on the free variables in \eqref{free:parameter:1}. To reduce the truncation errors of $\bo(h^4)$ and $\bo(h^5)$, we consider
	\be\label{QE:order:4:reduce:2}
	\begin{split}
		& \sum_{k=-1}^{1} \sum_{\ell=-1}^{1} C_{k,\ell}G_{7,m,n}  (kh, \ell h)=\bo(h^{8}) \quad \mbox{for all}  \quad (m,n)\in \ind_{7}^1 \setminus \{(1,3)\},\\
		& \sum_{k=-1}^{1} \sum_{\ell=-1}^{1} C_{k,\ell}G_{7,1,3}  (kh, \ell h)-\tfrac{h^6}{90}(\Qa^{(0,1)}-\Qb^{(1,0)})=\bo(h^{8}),\quad C_{0,0}|_{h=0}\ne 0,  \quad F_{i,j}|_{h=0}=\psi.
	\end{split}
	\ee
	Again using the symbolic calculation from Maple, there exist $C_{k,\ell}=\sum_{p=0}^{7}c_{k,\ell,p} h^p$ with $k,\ell=-1,0,1$ satisfying \eqref{QE:order:4:reduce:2} for any free variables in the following set:
	\be\label{free:parameter:2}
	\begin{split}
		&\{c_{-1,0,7}\} \cup \{c_{-1,1,p}\}_{p=6,7} \cup \{c_{0,-1,7}\}  \cup \{c_{0,0,p}\}_{p=6,7} \cup \{c_{0,1,p}\}_{p=5,6,7} \cup \{c_{1,-1,p}\}_{p=5,6,7}  \\
		&\  \cup \{c_{1,0,p}\}_{p=4,\dots,7} \cup \{c_{1,1,p}\}_{p=2,\dots,7}.
	\end{split}
	\ee
	Furthermore,  $C_{k,\ell}=\sum_{p=0}^{7}c_{k,\ell,p} h^p$ fulfilling \eqref{QE:order:4:reduce:2}  yields
	\be\label{ckl0:elliptic} 
	\begin{split} 
		& c_{-1,-1,0}=c_{-1,1,0}=c_{1,-1,0}=c_{1,1,0}=\tfrac{1}{6}, \qquad  c_{-1,0,0}=c_{1,0,0}=c_{0,-1,0}=c_{0,1,0}=\tfrac{2}{3}, \qquad  c_{0,0,0}=-\tfrac{10}{3}.	
	\end{split}	
	\ee
	Similarly to \eqref{L:h:u:uh}--\eqref{elliptic:order:M},  $C_{k,\ell}=\sum_{p=0}^{7}c_{k,\ell,p} h^p$ meeting \eqref{QE:order:4:reduce:2} implies
	\be\label{elliptic:order:6}
	h^{-2}	\mathcal{L}_h (\Qu- \Qu_h)=\tfrac{h^4}{90}(\Qa^{(0,1)}-\Qb^{(1,0)})\Qu^{(1,3)}+\bo(h^{6}),
	\ee
	and
	$\mathcal{L}_h \Qu_h$ approximates $	\Delta \Qu  +\Qa \Qu_x +\Qb \Qu_y   = \psi$ with  the fourth-order of consistency at $(x_i,y_j)$, where $	\mathcal{L}_h \Qu$ is defined in \eqref{L:h:u}, $	\mathcal{L}_h \Qu_h$ is defined in \eqref{L:h:u:h}, and $F_{i,j}$ is defined in \eqref{Fij:order6}. In our numerical examples, we set all free variables in \eqref{free:parameter:2} to zero, so $C_{k,\ell}=\sum_{p=0}^{7}c_{k,\ell,p} h^p$ with $k,\ell=-1,0,1$ can be uniquely determined by solving \eqref{QE:order:4:reduce:2}. In summary, we have the FDM with the reduced pollution effects of $\bo(h^4)$ and $\bo(h^5)$ in the following theorem:
	\begin{theorem}\label{thm:elliptic:general:reduce}
		Assume $\kappa, u, \alpha, \beta, f$ are all smooth in \eqref{Model：Original:Elliptic},  $\mathcal{L}_h  {\textup \Qu}_h$ is defined in \eqref{L:h:u:h}, the nine coefficients
		$C_{k,\ell}=\sum_{p=0}^{7}c_{k,\ell,p} h^p$ with $k,\ell=-1,0,1$ are uniquely determined by solving \eqref{QE:order:4:reduce:2} with all free variables  being zero in \eqref{free:parameter:2}, the right-hand side  $F_{i,j}$ is defined in \eqref{Fij:order6}, and  all the functions in $\mathcal{L}_h  {\textup \Qu}_h$ are evaluated at the grid point $(x_i,y_j)$ in \eqref{xiyj}.  Then $h^{-2}	\mathcal{L}_h  {\textup \Qu}_h$
		approximates $	\Delta  {\textup \Qu}  +{\textup \Qa}  {\textup \Qu}_x +{\textup \Qb}  {\textup \Qu}_y   = \psi$ with  the fourth-order of consistency at $(x_i,y_j)$, and the truncation error  $\tfrac{h^4}{90}({\textup \Qa}^{(0,1)}-{\textup \Qb}^{(1,0)}) {\textup \Qu}^{(1,3)}+\bo(h^{6})$.
	\end{theorem}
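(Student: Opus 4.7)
The plan is to combine the Taylor expansion machinery developed in \cref{subsec:tech} with the algebraic constraints \eqref{QE:order:4:reduce:2} that single out the unique $\{C_{k,\ell}\}_{k,\ell=-1,0,1}$, and then read off the truncation error. Concretely, with $M=6$ chosen in \eqref{Ckl:degree:7}--\eqref{Fij:order6}, I would invoke the master expansion \eqref{u:GH} applied to $\Qu(kh+x_i,\ell h+y_j)$ for the nine stencil points and substitute into the linear operator $\mathcal{L}_h\Qu$ defined in \eqref{L:h:u}. This produces, exactly as in \eqref{Lh:u},
\begin{equation*}
h^{-2}\mathcal{L}_h \Qu = h^{-2}\sum_{(m,n)\in \ind_{7}^{1}} \Qu^{(m,n)} I_{7,m,n} + h^{-2}\sum_{(m,n)\in \ind_{5}} \psi^{(m,n)} J_{5,m,n} + \bo(h^{6}).
\end{equation*}

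Next I would subtract $\mathcal{L}_h \Qu_h$ from \eqref{L:h:u:h} and invoke the construction of $F_{i,j}$ in \eqref{Fij:order6}: by definition $F_{i,j}$ collects precisely those terms of $h^{-2}\sum_{(m,n)\in\ind_5}\psi^{(m,n)}J_{5,m,n}$ of degree $\le 5$ in $h$, so the entire $\psi$-contribution to the truncation error is $\bo(h^{6})$. The condition $F_{i,j}|_{h=0}=\psi$ in \eqref{QE:order:4:reduce:2} guarantees that $\mathcal{L}_h\Qu_h=F_{i,j}$ is consistent with the continuous equation at leading order, while $C_{0,0}|_{h=0}\ne 0$ ensures the stencil is nontrivial.

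Then I would apply the constraints \eqref{QE:order:4:reduce:2} term by term. For every $(m,n)\in \ind_{7}^{1}\setminus\{(1,3)\}$, we have $I_{7,m,n}=\bo(h^{8})$, so after division by $h^{2}$ each such term contributes $\bo(h^{6})$. The single exceptional index $(1,3)$ contributes
\begin{equation*}
h^{-2} I_{7,1,3}\,\Qu^{(1,3)} = h^{-2}\Bigl[\tfrac{h^6}{90}(\Qa^{(0,1)}-\Qb^{(1,0)}) + \bo(h^{8})\Bigr]\Qu^{(1,3)} = \tfrac{h^4}{90}(\Qa^{(0,1)}-\Qb^{(1,0)})\Qu^{(1,3)} + \bo(h^{6}).
\end{equation*}
Summing the $\psi$-contribution (which is $\bo(h^6)$) with the $\Qu$-contribution gives the announced local truncation error $\tfrac{h^4}{90}(\Qa^{(0,1)}-\Qb^{(1,0)})\Qu^{(1,3)}+\bo(h^{6})$, establishing the fourth-order consistency.

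The main obstacle, which is delegated to symbolic computation in Maple in the text, is verifying that the linear system \eqref{QE:order:4:reduce:2} in the unknowns $\{c_{k,\ell,p}\}_{k,\ell=-1,0,1,\,p=0,\dots,7}$ is solvable with the free parameters \eqref{free:parameter:2} all set to zero, while still yielding a nontrivial stencil with $C_{0,0}|_{h=0}=-10/3$ as in \eqref{ckl0:elliptic}. A fully rigorous treatment would require either exhibiting the closed-form $c_{k,\ell,p}$ (as was done in \eqref{Ckl:elliptic:general} at the lower consistency stage) or demonstrating that the constraint matrix, after imposing \eqref{free:parameter:2}, has full rank; this is essentially a Gaussian elimination on an explicit finite system and the remaining steps of the proof reduce to the bookkeeping above, exactly mirroring the argument given for \cref{thm:elliptic:special}.
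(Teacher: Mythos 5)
Your proposal is correct and follows essentially the same route as the paper: the paper's own proof is a one-line reference back to the derivation \eqref{Ckl:degree:7}--\eqref{elliptic:order:6}, which is precisely the chain you spell out (Taylor expansion via \eqref{u:GH}, absorption of the $\psi$-terms into $F_{i,j}$ by \eqref{Fij:order6}, the constraints \eqref{QE:order:4:reduce:2} annihilating all $I_{7,m,n}$ except the $(1,3)$ term, and the Maple verification of solvability). Your explicit identification of the $\bo(h^{-2}\cdot h^6)$ contribution from $I_{7,1,3}$ as the stated leading truncation error matches \eqref{reduce:together} and \eqref{elliptic:order:6} exactly.
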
	
	\begin{proof}
		The proof follows directly from \eqref{Ckl:degree:7}--\eqref{elliptic:order:6}.
	\end{proof}
We note that the theoretical results of the discrete maximum principle in \citep{Feng2022,FHM2023,Feng2024,LiIto2001,LiZhang2020} imply that, if the nine coefficients $\{ C_{k,\ell}\}_{k,\ell=-1,0,1}$  satisfy 
	\be\label{sign:and:sum}
	\begin{split}
	& C_{0,0}>0, \qquad C_{k,\ell}\le 0, \quad \text{if} \quad (k,\ell)\neq (0,0), \quad \text{the sign condition}; \\ 
	& \sum_{k=-1}^1\sum_{\ell=-1}^1C_{k,\ell}\ge 0,  \quad \text{the sum condition};
	\end{split}
	\ee
	then the corresponding FDM with the Dirichlet boundary condition yields a numerical solution which satisfies the discrete maximum principle, and generates an M-matrix (a real square matrix with the non-positive off-diagonal entries and the positive diagonal entries such that all row sums are non-negative with at least one row sum being positive).

	\begin{proposition}\label{prop:1:ckl0}
		All $\{C_{k,\ell}\}_{k,\ell=-1,0,1}$ in \cref{thm:elliptic:special,thm:elliptic:general,thm:elliptic:general:reduce}
		with $C_{k,\ell}|_{h=0}=c_{k,\ell,0}$ satisfy
		\be\label{prop:ckl0:elliptic} 
		\begin{split} 
			& c_{\pm1,\pm1,0}=c_{\pm1,\mp1,0}=\tfrac{1}{6}, \qquad c_{\pm1,0,0}=c_{0,\pm1,0}=\tfrac{2}{3},\qquad  c_{0,0,0}=-\tfrac{10}{3},	\qquad \sum_{k=1}^{-1}\sum_{\ell=1}^{-1} c_{k,\ell,0}=0.
		\end{split}	
		\ee
		That is, each FDM in \cref{thm:elliptic:special,thm:elliptic:general,thm:elliptic:general:reduce} satisfies the discrete maximum principle, and generates an M-matrix,  when the mesh size $h$ is sufficiently small.
	\end{proposition}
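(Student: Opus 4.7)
The plan is to verify the claimed constant-term identities \eqref{prop:ckl0:elliptic} directly from the explicit coefficient formulas already derived, and then invoke the cited discrete maximum principle theory. The argument decomposes naturally according to which of the three theorems is being considered.

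First, I would read off the values $c_{k,\ell,0}=C_{k,\ell}|_{h=0}$ by inspection. For \cref{thm:elliptic:general}, the coefficients are written out term-by-term in \eqref{Ckl:elliptic:general}, and each line clearly exhibits its $h^0$ contribution ($\tfrac{1}{6}$ for the corner stencil points, $\tfrac{2}{3}$ for the edge midpoints, and $-\tfrac{10}{3}$ for the center), matching \eqref{prop:ckl0:elliptic}. For \cref{thm:elliptic:special} the same inspection applies to \eqref{Ckl:elliptic:special}, noting that the special case $\Qa=\Qb$ of the general formulas produces identical constant terms. For \cref{thm:elliptic:general:reduce}, the relevant values are already recorded in \eqref{ckl0:elliptic}, which was stated as a consequence of solving \eqref{QE:order:4:reduce:2}; I would simply reference that identity. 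The sum condition is then a one-line arithmetic check:
\[
\sum_{k,\ell=-1}^{1} c_{k,\ell,0}=4\cdot\tfrac{1}{6}+4\cdot\tfrac{2}{3}-\tfrac{10}{3}=\tfrac{4}{6}+\tfrac{16}{6}-\tfrac{20}{6}=0.
\]

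Next, to transfer these constant-term identities into the sign and sum conditions \eqref{sign:and:sum} needed for the discrete maximum principle and the M-matrix property (after the standard sign normalization of the operator, so that the center coefficient is positive and the off-diagonal coefficients are non-positive), I would use the expansion $C_{k,\ell}=c_{k,\ell,0}+\bo(h)$. Since the leading constants have definite strict signs ($-\tfrac{10}{3}$ at the center and strictly positive at the eight neighbors), continuity in $h$ guarantees that for all sufficiently small $h$ the same strict sign holds for each $C_{k,\ell}$, verifying the sign condition. The sum condition follows because $\sum_{k,\ell}C_{k,\ell}=\sum_{k,\ell}c_{k,\ell,0}+\bo(h)=\bo(h)$ in the interior; at mesh points adjacent to $\partial\Omega$ the Dirichlet contributions shift the affected entries to the right-hand side, leaving strictly positive row sums for at least one row, which is precisely what the M-matrix characterization from \citep{Feng2022,FHM2023,Feng2024,LiIto2001,LiZhang2020} requires.

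Finally, I would conclude by citing those results verbatim: once the sign and sum conditions are in force and at least one row of the assembled linear system has a strictly positive row sum, the system matrix is an M-matrix and the FDM satisfies the discrete maximum principle. No substantive obstacle arises; the only mildly delicate point is the "sufficiently small $h$" quantification, which is handled uniformly because the perturbation $C_{k,\ell}-c_{k,\ell,0}=\bo(h)$ is controlled by bounded partial derivatives of $\Qa$ and $\Qb$ (smooth by the standing assumption after \eqref{Model：Original:Parabolic}) on the compact domain $\overline{\Omega}$, so a single threshold $h_0>0$ works across all grid points.
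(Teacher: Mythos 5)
Your proposal matches the paper's own proof, which likewise reads the constant terms off \eqref{Ckl:elliptic:general}, \eqref{Ckl:elliptic:special}, and \eqref{ckl0:elliptic}, negates the stencil as in \eqref{negative:Lh} so that the centre entry is positive and the neighbours are non-positive for small $h$, and then invokes the discrete-maximum-principle/M-matrix criterion recalled in \eqref{sign:and:sum}; if anything you are more explicit than the paper's two-line argument. The one soft spot is the interior row-sum step, where ``$\sum_{k,\ell}C_{k,\ell}=\bo(h)$'' does not by itself give the required sign of the row sums --- one should observe (as a direct check of \eqref{Ckl:elliptic:general} and \eqref{Ckl:elliptic:special} confirms) that the full row sum in fact vanishes identically --- but the paper's proof is no more careful on this point.
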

	\begin{proof}
	The expression	\eqref{prop:ckl0:elliptic}   stems from \eqref{Ckl:elliptic:general}, \eqref{Ckl:elliptic:special}, and \eqref{ckl0:elliptic}. By $C_{k,\ell}|_{h=0}=c_{k,\ell,0}$ and \eqref{L:h:u:h}, we see that 
	\be \label{negative:Lh}
	-h^{-2}	\mathcal{L}_h := h^{-2} \sum_{k=-1}^{1} \sum_{\ell=-1}^{1} (-C_{k,\ell}) (\Qu_h)_{i+k,j+\ell}=-F_{i,j},
	\ee
	  generates an M-matrix, and the numerical solution computed by \eqref{negative:Lh} satisfies the discrete maximum principle,  if $h$ is sufficiently small. 
	\end{proof}

	Now, we use the fourth-order compact 9-point FDMs in \cref{thm:elliptic:special,thm:elliptic:general,thm:elliptic:general:reduce}, and the iteration method \eqref{Model：iteration:Elliptic}, to numerically solve \eqref{Model：Original:Elliptic} in the following \cref{algm1}. Recall that $u$ denotes the exact solution of \eqref{Model：Original:Elliptic}.
	Now, we define  $u_h$ as the numerical solution computed by the FDMs in \cref{thm:elliptic:special,thm:elliptic:general,thm:elliptic:general:reduce} and the iteration method \eqref{Model：iteration:Elliptic}.
	\begin{algorithm}\label{algm1}
		We use the fourth-order compact 9-point  FDMs in \cref{thm:elliptic:special,thm:elliptic:general,thm:elliptic:general:reduce} 
		to solve \eqref{Model：linear:Elliptic} to obtain ${\textup \Qu}_h$ with the iteration method \eqref{Model：iteration:Elliptic} after 40 iterations.  Then $u_h={\textup \Qu}_h$.
	\end{algorithm}
	In \cref{algm1},  we consider $u_0= 0$ in \eqref{Model：iteration:Elliptic} as the initial guess in the iteration method, and we observe that 40 iterations are sufficient for convergence. For the  time-dependent nonlinear convection-diffusion equation \eqref{Model：Original:Parabolic} in \cref{sec:parabolic}, we use $u^{n+1/2}_0=u^{n}$ in \eqref{iteration:CN} for the CN method, $u^{n+3}_0=u^{n+2}$ in \eqref{iteration:BDF3} for the BDF3 method, and $u^{n+4}_0=u^{n+3}$  in \eqref{iteration:BDF4} for the BDF4 method, so we only need 20 iterations in \cref{algm2,algm3,algm4}.

	\subsection{Approximations of the high-order partial derivatives of $\textnormal{\Qa}^{(m,n)}$, $\textnormal{\Qb}^{(m,n)}$, and $\psi^{(m,n)}$ }\label{amn:bmn}
	
For	the explicit expressions of $\{C_{k,\ell}\}_{k,\ell=-1,0,1}$ and $F_{i,j}$ in \eqref{Ckl:elliptic:general}, \eqref{Fij:elliptic:general},  and \eqref{Ckl:elliptic:special} of \cref{thm:elliptic:special,thm:elliptic:general}, the high-order partial derivatives $\{ \Qa^{(m,n)}, \Qb^{(m,n)}, \psi^{(m,n)} : m+n\le 2\}$ are required. Furthermore, the symbolic computation in Maple also reveals that the unique $\{C_{k,\ell}\}_{k,\ell=-1,0,1}$ and $F_{i,j}$ of the FDM in \cref{thm:elliptic:general:reduce} need  $\{ \Qa^{(m,n)}, \Qb^{(m,n)} : m+n\le 4\}$ and $\{ \psi^{(m,n)}  :  m+n\le 5\}$. Since $\kappa$ and $f$ are available in \eqref{Model：Original:Elliptic}, the expressions of \eqref{a:b:psi} and \eqref{QuQaQb} imply that, we only need to approximate
	\be\label{auk:mn}
	\left(\frac{\alpha_u(u)} {  \kappa}\right)^{(m,n)} \quad \text{and} \quad \left(\frac{\beta_u(u)} {  \kappa}\right)^{(m,n)}, \quad \text{with} \quad m+n\le4.
	\ee
	So we use the following FDMs to evaluate \eqref{auk:mn}. 
	For any smooth 1D function $\rho(x)\in C^5(\R)$,
	\\
	{\bf{ The first-order derivatives:}}
	\begin{align*}
		\rho_{x}(x_i)=&\tfrac{1}{h}\big[\tfrac{1}{20} \rho(x_{i-2})-  \tfrac{1}{2} \rho(x_{i-1})-\tfrac{1}{3}\rho(x_{i})+  \rho(x_{i+1})-\tfrac{1}{4}  \rho(x_{i+2})+\tfrac{1}{30}  \rho(x_{i+3})\big]+\bo(h^{5}), \\
		&  \text{ if} \quad  0 \le x_{i\pm 2}, x_{i+3} \le 1,\\
		\rho_{x}(x_i)=&\tfrac{1}{h}\big[-\tfrac{1}{30} \rho(x_{i-3})+  \tfrac{1}{4} \rho(x_{i-2})-\rho(x_{i-1})+  \tfrac{1}{3} \rho(x_{i})+\tfrac{1}{2}  \rho(x_{i+1})-\tfrac{1}{20}  \rho(x_{i+2})\big]+\bo(h^{5}), \\
		&  \text{ if} \quad  0 \le x_{i\pm 2}, x_{i-3} \le 1,\\
		\rho_{x}(x_i)=&\tfrac{\pm 1}{h}\big[  \tfrac{1}{5} \rho(x_{i\pm5}) -\tfrac{5}{4} \rho(x_{i\pm4}) +\tfrac{10}{3}\rho(x_{i\pm3})-5\rho(x_{i\pm2})   	+5  \rho(x_{i\pm1})  -\tfrac{137}{60}  \rho(x_{i}) \big]+\bo(h^{5}), \\
		& \text{ if} \quad 0 \le x_{i\pm5} \text{ and } x_{i}\le 1, \\
		\rho_{x}(x_i)=& \tfrac{\pm1}{h}\big[ -\tfrac{1}{20} \rho(x_{i\pm4})+ \tfrac{1}{3} \rho(x_{i\pm3})  -\rho(x_{i\pm2})+  2 \rho(x_{i\pm1}) - \tfrac{13}{12} \rho(x_{i})-\tfrac{1}{5} \rho(x_{i\mp1})\big]+\bo(h^{5}), \\
		& \text{ if} \quad 0 \le x_{i\pm4} \text{ and } x_{i\mp1}\le 1.
	\end{align*}
	{\bf{ The second-order derivatives:}}
	\begin{align*}
		\rho_{xx}(x_i)=&\tfrac{1}{h^2}\big[-\tfrac{1}{12} \rho(x_{i-2})+  \tfrac{4}{3} \rho(x_{i-1})-  \tfrac{5}{2} \rho(x_{i})+\tfrac{4}{3}  \rho(x_{i+1})-\tfrac{1}{12}  \rho(x_{i+2})\big]+\bo(h^{4}), \\
		&  \text{ if} \quad  0 \le x_{i\pm 2} \le 1,\\
		\rho_{xx}(x_i)=&\tfrac{1}{h^2}\big[ - \tfrac{5}{6}\rho(x_{i\pm5})+\tfrac{61}{12}\rho(x_{i\pm4})-13 \rho(x_{i\pm3})+  \tfrac{107}{6} \rho(x_{i\pm2})   	-  \tfrac{77}{6}   \rho(x_{i\pm1})+\tfrac{15}{4}\rho(x_{i}) \big]+\bo(h^{4}), \\
		& \text{ if} \quad 0 \le x_{i\pm5} \text{ and } x_{i}\le 1, \\
		\rho_{xx}(x_i)=&\tfrac{1}{h^2}\big[\tfrac{1}{12} \rho(x_{i\pm4})-\tfrac{1}{2} \rho(x_{i\pm3})  +\tfrac{7}{6}\rho(x_{i\pm2})- \tfrac{1}{3} \rho(x_{i\pm1})-  \tfrac{5}{4} \rho(x_{i})+\tfrac{5}{6}   \rho(x_{i\mp1})\big]+\bo(h^{4}), \\
		& \text{ if} \quad 0 \le x_{i\pm4} \text{ and } x_{i\mp1}\le 1.
	\end{align*}
	{\bf{ The third-order derivatives:}}
	\begin{align*}
		\rho_{xxx}(x_i)=&\tfrac{1}{h^3}\big[-\tfrac{1}{4} \rho(x_{i-2})-\tfrac{1}{4}  \rho(x_{i-1})+\tfrac{5}{2} \rho(x_{i})-  \tfrac{7}{2} \rho(x_{i+1})+\tfrac{7}{4} \rho(x_{i+2}) -  \tfrac{1}{4}\rho(x_{i+3})\big]+\bo(h^{3}), \\
		&  \text{ if} \quad  0 \le x_{i\pm 2}, x_{i+3} \le 1,\\
		\rho_{xxx}(x_i)=&\tfrac{1}{h^3}\big[\tfrac{1}{4} \rho(x_{i-3})-  \tfrac{7}{4} \rho(x_{i-2})+\tfrac{7}{2}\rho(x_{i-1})-  \tfrac{5}{2} \rho(x_{i})+\tfrac{1}{4}  \rho(x_{i+1})+\tfrac{1}{4}  \rho(x_{i+2})\big]+\bo(h^{3}), \\
		&  \text{ if} \quad  0 \le x_{i\pm 2}, x_{i-3} \le 1,\\
		\rho_{xxx}(x_i)=&\tfrac{\pm 1}{h^3}\big[ \tfrac{7}{4} \rho(x_{i\pm5}) -\tfrac{41}{4} \rho(x_{i\pm4})+ \tfrac{49}{2} \rho(x_{i\pm3})-\tfrac{59}{2} \rho(x_{i\pm2})   	+\tfrac{71}{4} \rho(x_{i\pm1})   -\tfrac{17}{4}  \rho(x_{i}) \big]+\bo(h^{3}), \\
		& \text{ if} \quad 0 \le x_{i\pm5} \text{ and } x_{i}\le 1, \\
		\rho_{xxx}(x_i)=&\tfrac{\pm 1}{h^3}\big[ \tfrac{1}{4} \rho(x_{i\pm4}) -\tfrac{7}{4} \rho(x_{i\pm3})+\tfrac{11}{2}\rho(x_{i\pm2})-\tfrac{17}{2} \rho(x_{i\pm1})+\tfrac{25}{4} \rho(x_{i}) -\tfrac{7}{4} \rho(x_{i\mp1})\big]+\bo(h^{3}), \\
		& \text{ if} \quad 0 \le x_{i\pm4} \text{ and } x_{i\mp1}\le 1.
	\end{align*}
	{\bf{ The fourth-order derivatives:}}
	\begin{align*}
		\rho_{xxxx}(x_i)=&\tfrac{1}{h^4}\big[ \rho(x_{i-2})-4 \rho(x_{i-1})+6 \rho(x_{i})-4  \rho(x_{i+1})+  \rho(x_{i+2})\big]+\bo(h^{2}), \\
		&  \text{ if} \quad  0 \le x_{i\pm 2} \le 1,\\
		\rho_{xxxx}(x_i)=&\tfrac{1}{h^4}\big[  -2\rho(x_{i\pm5})+11 \rho(x_{i\pm4})-  24 \rho(x_{i\pm3})+  26 \rho(x_{i\pm2})   	 -14 \rho(x_{i\pm1})+3\rho(x_{i}) \big]+\bo(h^2), \\
		& \text{ if} \quad 0 \le x_{i\pm5} \text{ and } x_{i}\le 1, \\
		\rho_{xxxx}(x_i)=&\tfrac{1}{h^4}\big[   -\rho(x_{i\pm4})+ 6\rho(x_{i\pm3})-  14 \rho(x_{i\pm2})+  16 \rho(x_{i\pm1})-9 \rho(x_{i})+2\rho(x_{i\mp1})\big]+\bo(h^2), \\
		& \text{ if} \quad 0 \le x_{i\pm4} \text{ and } x_{i\mp1}\le 1. 
	\end{align*}
	{\bf{ The fifth-order derivatives:}}
	\begin{align*}
		\rho_{xxxxx}(x_i)=&\tfrac{1}{h^5}\big[- \rho(x_{i-2})+ 5 \rho(x_{i-1})-10 \rho(x_{i})+10\rho(x_{i+1})-5 \rho(x_{i+2}) +\rho(x_{i+3})\big]+\bo(h), \\
		&  \text{ if} \quad  0 \le x_{i\pm 2}, x_{i+3} \le 1,\\
		\rho_{xxxxx}(x_i)=&\tfrac{1}{h^5}\big[- \rho(x_{i-3})+ 5 \rho(x_{i-2})-10 \rho(x_{i-1})+10\rho(x_{i})-5 \rho(x_{i+1}) +\rho(x_{i+2})\big]+\bo(h), \\
		&  \text{ if} \quad  0 \le x_{i\pm 2}, x_{i-3} \le 1,\\
		\rho_{xxxxx}(x_i)=&\tfrac{\pm 1}{h^5}\big[ \rho(x_{i\pm5}) -5 \rho(x_{i\pm4})+ 10 \rho(x_{i\pm3})-10 \rho(x_{i\pm2})   	+5 \rho(x_{i\pm1})   -  \rho(x_{i}) \big]+\bo(h), \\
		& \text{ if} \quad 0 \le x_{i\pm5} \text{ and } x_{i}\le 1, \\
		\rho_{xxxxx}(x_i)=&\tfrac{\pm 1}{h^5}\big[ \rho(x_{i\pm4}) -5 \rho(x_{i\pm3})+ 10 \rho(x_{i\pm2})-10 \rho(x_{i\pm1})   	+5 \rho(x_{i})   -  \rho(x_{i\mp1}) \big]+\bo(h), \\
		& \text{ if} \quad 0 \le x_{i\pm4} \text{ and } x_{i\mp1}\le 1.
	\end{align*}
	Similarly, 	for any smooth 2D function $\rho(x,y)\in C^5(\R^2)$, we can evaluate  $\rho_{x}(x_i,y_j)$,	$\rho_{xx}(x_i,y_j)$, 	$\rho_{xxx}(x_i,y_j)$, 	$\rho_{xxxx}(x_i,y_j)$, $\rho_{xxxxx}(x_i,y_j)$, $\rho_{y}(x_i,y_j)$, 	$\rho_{yy}(x_i,y_j)$, 	$\rho_{yyy}(x_i,y_j)$, 	$\rho_{yyyy}(x_i,y_j)$, $\rho_{yyyyy}(x_i,y_j)$. Then we can evaluate the high-order mixed derivatives as follows: 
	\[
	\begin{split}
		&	\rho_{xy}=(\rho_{x})_{y}, \quad \rho_{xxy}=(\rho_{xx})_{y}, \quad \rho_{xyy}=(\rho_{x})_{yy}, \quad \rho_{xxxy}=(\rho_{xxx})_{y}, \quad \rho_{xxyy}=(\rho_{xx})_{yy},  \\
		&\rho_{xyyy}=(\rho_{x})_{yyy}, \quad   \rho_{xxxxy}=(\rho_{xxxx})_{y}, \quad \rho_{xxxyy}=(\rho_{xxx})_{yy}, \quad \rho_{xxyyy}=(\rho_{xx})_{yyy}, \quad \rho_{xyyyy}=(\rho_{x})_{yyyy}.
	\end{split}
	\]
	Note that the fifth-order partial derivatives are not required in this section, but they are necessary for the FDMs in \cref{sec:parabolic}. 
	\section{Second- to fourth-order compact 9-point FDMs for the time-dependent nonlinear  convection-diffusion equation}\label{sec:parabolic}

	Similarly to \cref{sec:elliptic}, we rewrite the  time-dependent nonlinear convection-diffusion equation \eqref{Model：Original:Parabolic}  as the linear problem via a fixed point method in \cref{sec:reformulate:2}, and the compact 9-point FDMs are constructed in \cref{subsec:parabolic} to solve the reformulated linear problem.

	\subsection{Reformulation of  the  time-dependent nonlinear convection-diffusion equation}\label{sec:reformulate:2}

	\eqref{Model：Original:Parabolic} leads to
	\be\label{parabolic:expand}
	u_t-  \kappa \Delta u -\kappa_x u_x -\kappa_y u_y +   \alpha_u(u)u_x+\beta_u(u)u_y  = f.
	\ee
	Recall that  temporal domain $I=[0,1]$, and $u$ is the exact solution of \eqref{Model：Original:Parabolic} and \eqref{parabolic:expand}. Here, we define that 
	\be\label{def:tau}
	u^{n}:=u|_{t=t_{n}}, \qquad t_{n}:=n \tau, \qquad n=0,\ldots,N_2, \qquad \tau:=1/N_2, \qquad N_2\in \N.
	\ee
	\textbf{The second-order Crank-Nicolson method:} \eqref{Model：Original:Parabolic} and \eqref{parabolic:expand} with the CN method in \citep{Burkardt2020} imply
	\be\label{time:order:2}
	\begin{cases}
		& \hspace{-0.3cm} \displaystyle{\frac{u^{n+1/2}-u^{n}}{ \tau/2}-  \kappa^{n+1/2} \Delta u^{n+1/2}} \displaystyle{ +[\alpha_u(u^{n+1/2})-\kappa^{n+1/2}_x] u^{n+1/2}_x +[\beta_u(u^{n+1/2})-\kappa^{n+1/2}_y] u^{n+1/2}_y}= f^{n+1/2}, \\
		& \hspace{-0.3cm} u^{n+1}=2u^{n+1/2}-u^{n}, \quad \text{with the initial } n=0, \text{ the given } u^0\in \Omega, \ u^{n+1/2}\in \partial \Omega.
	\end{cases}
	\ee
	The first identity in \eqref{time:order:2} yields
	\be\label{parabolic:CN}
	\begin{split}
		&   \Delta u^{n+1/2} + \frac{\kappa^{n+1/2}_x-\alpha_u(u^{n+1/2})}{\kappa^{n+1/2}} u^{n+1/2}_x + \frac{\kappa^{n+1/2}_y-\beta_u(u^{n+1/2})}{\kappa^{n+1/2}} u^{n+1/2}_y-\frac{2u^{n+1/2}}{\tau \kappa^{n+1/2}  } \\
		& = \frac{-1}{\kappa^{n+1/2} } \left[f^{n+1/2}+\frac{2}{\tau }u^{n} \right], \quad \text{with the initial } n=0, \text{ the given } u^0\in \Omega, \ u^{n+1/2}\in \partial \Omega.
	\end{split}
	\ee
	We denote  the solution at $t=(n+1/2) \tau$ in the $\Qk$-iteration by $u^{n+1/2}_\Qk$. Then we use the following iteration method to rewrite \eqref{time:order:2}  as the  linear convection-diffusion equation:
	\be\label{iteration:CN}
	\begin{cases}
		& \hspace{-0.3cm}  \displaystyle{\Delta u_{\Qk+1}^{n+1/2} + \frac{\kappa^{n+1/2}_x-\alpha_u(u_\Qk^{n+1/2})}{\kappa^{n+1/2}} (u_{\Qk+1}^{n+1/2})_x + \frac{\kappa^{n+1/2}_y-\beta_u(u_\Qk^{n+1/2})}{\kappa^{n+1/2}} (u_{\Qk+1}^{n+1/2})_y-\frac{2u_{\Qk+1}^{n+1/2}}{\tau \kappa^{n+1/2}  }} \\
		& \hspace{-0.3cm} \displaystyle{= \frac{-1}{\kappa^{n+1/2}}\left[ f^{n+1/2}+\frac{2}{\tau  }u^{n}\right],}\    \\
		& \hspace{-0.3cm} \Qk:=\Qk+1, \ \text{the initial  } \Qk=n=0, \text{ the given } u^0\in \Omega, \  u_{\Qk+1}^{n+1/2}\in \partial \Omega,\  u^{n+1/2}_{0}= u^{n}  \text{ in } \Omega, \\
		& \hspace{-0.3cm} u^{n+1}=2u^{n+1/2}-u^{n}, 
	\end{cases}
	\ee
	where $u^{n}$ with $n\ge 1$ is calculated at $t=n\tau $ by the same iteration method.
	Let 
	\be\label{sub:c:1}
	\begin{split}
		& \tau:=rh, \qquad 
		\Qu:= u_{\Qk+1}^{n+1/2}, \qquad \Qa:=\frac{\kappa^{n+1/2}_x-\alpha_u(u_\Qk^{n+1/2})}{\kappa^{n+1/2}}, \qquad \Qb:=\frac{\kappa^{n+1/2}_y-\beta_u(u_\Qk^{n+1/2})}{\kappa^{n+1/2}}, \\	
		&  \Qc:=-\frac{2}{ r\kappa^{n+1/2}  }, \qquad \Qd:= \frac{\Qc}{h}, \qquad \varphi:=-\frac{ f^{n+1/2}}{\kappa^{n+1/2} }, \qquad \phi:=-\frac{2u^{n}}{ r\kappa^{n+1/2}  },\qquad \psi:=  \varphi+\frac{\phi}{h},
	\end{split}
	\ee
	where $r$ is a positive constant.
	Then  \eqref{Model：Original:Parabolic} and \eqref{iteration:CN}  indicate
	\be\label{elliptic:CN:2}
	\Delta \Qu  + \Qa \Qu_x + \Qb \Qu_y + \Qd \Qu   =   \psi \text{ in }  \Omega \quad \text{and} \quad \Qu=g \text{ on } \partial\Omega.
	\ee
	\textbf{The third-order backward difference formula:}
	\eqref{Model：Original:Parabolic} and \eqref{parabolic:expand} with the BDF3 method in \citep[p.366]{Hairer1993} give
	\be\label{time:order:3}
	\begin{split}
		&  \frac{11u^{n+3}-18u^{n+2}+9u^{n+1}-2u^{n} }{ 6 \tau}-  \kappa^{n+3} \Delta u^{n+3}+(\alpha_u(u^{n+3})-\kappa^{n+3}_x) u^{n+3}_x\\
		&  +(\beta_u(u^{n+3})-\kappa^{n+3}_y) u^{n+3}_y= f^{n+3}, \quad \text{with the given } u^0\in \Omega \text{ and } u^{n+3}\in \partial \Omega.
	\end{split}
	\ee
	After a direct calculation, we rewrite
	\be\label{nonlinear:BDF3}
	\begin{split}
		&   \Delta u^{n+3} + \frac{\kappa^{n+3}_x-\alpha_u(u^{n+3})}{\kappa^{n+3}} u^{n+3}_x + \frac{\kappa^{n+3}_y-\beta_u(u^{n+3})}{\kappa^{n+3}} u^{n+3}_y-\frac{11u^{n+3}}{6\tau \kappa^{n+3}  } \\
		& =\frac{-1}{\kappa^{n+3}} \left[  f^{n+3}+\frac{3}{\tau   }u^{n+2}-\frac{3}{2\tau  }u^{n+1}+\frac{1}{3\tau  }u^{n}\right], \quad \text{with the given } u^0\in \Omega \text{ and } u^{n+3}\in \partial \Omega. 
	\end{split}
	\ee
	Similarly to \eqref{iteration:CN}, the above nonlinear equation \eqref{nonlinear:BDF3} is solved by the following linear convection-diffusion equation, via the iteration method: 
	\be\label{iteration:BDF3}
	\begin{cases}
		&   \displaystyle{ \hspace{-0.3cm} \Delta u_{\Qk+1}^{n+3} + \frac{\kappa^{n+3}_x-\alpha_u(u_\Qk^{n+3})}{\kappa^{n+3}} (u_{\Qk+1}^{n+3})_x + \frac{\kappa^{n+3}_y-\beta_u(u_\Qk^{n+3})}{\kappa^{n+3}} (u_{\Qk+1}^{n+3})_y-\frac{11u_{\Qk+1}^{n+3} }{6\tau \kappa^{n+3}  }} \\
		&  \hspace{-0.3cm}  \displaystyle{= \frac{-1}{\kappa^{n+3}}\left[ f^{n+3}+\frac{3}{\tau  }u^{n+2}-\frac{3}{2\tau   }u^{n+1}+\frac{1}{3\tau   }u^{n}\right],} \\
		& \hspace{-0.3cm} \Qk:=\Qk+1, \ \text{the initial  } \Qk=n=0, \text{ the given } u^0\in \Omega,\  u_{\Qk+1}^{n+3}\in \partial \Omega, \text{ and }  u^{n+3}_{0}= u^{n+2} \text{ in } \Omega,\\
	\end{cases}
	\ee
	where $u^{n+3}_\Qk$ with $n\ge 0$ is computed in the $\Qk$-iteration at $t=(n+3)\tau$; $u^{n},u^{n+1}$, and $u^{n+2}$ with $n\ge3$ are calculated at $t=n\tau, (n+1)\tau$, and $(n+2)\tau $ by the same iteration method; $u^{1}$ and $u^{2}$ are computed by the CN method.
	Let 
	\be\label{sub:c:2}
	\begin{split}
		& \tau:=rh, \qquad 
		\Qu:= u_{\Qk+1}^{n+3}, \qquad \Qa:=\frac{\kappa^{n+3}_x-\alpha_u(u_\Qk^{n+3})}{\kappa^{n+3}}, \qquad \Qb:=\frac{\kappa^{n+3}_y-\beta_u(u_\Qk^{n+3})}{\kappa^{n+3}},\qquad \Qc:=-\frac{11}{ 6r\kappa^{n+3}  },\\
		&  \Qd:= \frac{\Qc}{h},  \qquad \psi:=  \varphi+\frac{\phi}{h} , \qquad \varphi:=-\frac{ f^{n+3}}{\kappa^{n+3} }, \qquad \phi:=\frac{-1}{\kappa^{n+3}}\left[\frac{3}{r   }u^{n+2}-\frac{3}{2r   }u^{n+1}+\frac{1}{3r   }u^{n}\right],
	\end{split}
	\ee
	where $r$ is a positive constant.
	Then
	\be\label{elliptic:BDF3:1}
	\Delta \Qu  + \Qa \Qu_x + \Qb \Qu_y + \Qd \Qu   =   \psi \text{ in }  \Omega \quad \text{and} \quad \Qu=g \text{ on } \partial\Omega.
	\ee
	\textbf{The fourth-order backward difference formula:}
	\eqref{Model：Original:Parabolic} and \eqref{parabolic:expand} with the BDF4 method in \citep[p.366]{Hairer1993}  generate
	\be\label{time:order:4}
	\begin{split}
		&  \frac{25u^{n+4}-48u^{n+3}+36u^{n+2}-16u^{n+1}+3u^{n} }{ 12 \tau}-  \kappa^{n+4} \Delta u^{n+4}+(\alpha_u(u^{n+4})-\kappa^{n+4}_x) u^{n+4}_x\\
		&  +(\beta_u(u^{n+4})-\kappa^{n+4}_y) u^{n+4}_y= f^{n+4}, \quad \text{with the given } u^0\in \Omega \text{ and } u^{n+4}\in \partial \Omega.
	\end{split}
	\ee
	Then,
	\be\label{nonlinear:BDF4}
	\begin{split}
		&   \Delta u^{n+4} + \frac{\kappa^{n+4}_x-\alpha_u(u^{n+4})}{\kappa^{n+4}} u^{n+4}_x + \frac{\kappa^{n+4}_y-\beta_u(u^{n+4})}{\kappa^{n+4}} u^{n+4}_y-\frac{25u^{n+4}}{12\tau \kappa^{n+4}  } \\
		& = \frac{-1}{\kappa^{n+4}}\left[  f^{n+4}+\frac{4}{\tau  }u^{n+3}-\frac{3}{\tau  }u^{n+2}+\frac{4}{3\tau   }u^{n+1} -\frac{1}{4\tau   }u^{n}\right], \quad \text{with given } u^0\in \Omega \text{ and } u^{n+4}\in \partial \Omega. 
	\end{split}
	\ee
	Similarly to \eqref{iteration:BDF3}, we linearize \eqref{nonlinear:BDF4} as
	\be\label{iteration:BDF4}
	\begin{cases}
		&   \displaystyle{ \hspace{-0.3cm} \Delta u_{\Qk+1}^{n+4} + \frac{\kappa^{n+4}_x-\alpha_u(u_\Qk^{n+4})}{\kappa^{n+4}} (u_{\Qk+1}^{n+4})_x + \frac{\kappa^{n+4}_y-\beta_u(u_\Qk^{n+4})}{\kappa^{n+4}} (u_{\Qk+1}^{n+4})_y-\frac{25u_{\Qk+1}^{n+4}}{12\tau \kappa^{n+4}  } } \\
		&  \hspace{-0.3cm}  \displaystyle{=  \frac{-1}{\kappa^{n+4}}\left[ f^{n+4}+\frac{4}{\tau   }u^{n+3}-\frac{3}{\tau   }u^{n+2}+\frac{4}{3\tau   }u^{n+1}-\frac{1}{4\tau   }u^{n}\right],} \\
		& \hspace{-0.3cm} \Qk:=\Qk+1, \ \text{the initial  } \Qk=n=0, \text{ the given } u^0\in \Omega,\  u_{\Qk+1}^{n+4}\in \partial \Omega, \text{ and }  u^{n+4}_{0}= u^{n+3} \text{ in } \Omega,\\
	\end{cases}
	\ee
	where $u^{1}$ and $u^{2}$ are computed by the CN method, and $u^{3}$ is computed by the BDF3 method.
	Let 
	\be\label{sub:c:3}
	\begin{split}
		& \tau:=rh, \qquad 
		\Qu:= u_{\Qk+1}^{n+4}, \qquad \Qa:=\frac{\kappa^{n+4}_x-\alpha_u(u_\Qk^{n+4})}{\kappa^{n+4}}, \qquad \Qb:=\frac{\kappa^{n+4}_y-\beta_u(u_\Qk^{n+4})}{\kappa^{n+4}},\qquad \Qc:=-\frac{25}{ 12r\kappa^{n+4}  },\\
		&  \Qd:= \frac{\Qc}{h},  \qquad \psi:=  \varphi+\frac{\phi}{h} , \qquad \varphi:=-\frac{ f^{n+4}}{\kappa^{n+4} }, \qquad \phi:=-\frac{1}{\kappa^{n+4}} \left[\frac{4}{r  }u^{n+3}-\frac{3}{r  }u^{n+2}+\frac{4}{3r   }u^{n+1}-\frac{1}{4r   }u^{n}\right],
	\end{split}
	\ee
	where $r$ is a positive constant.
	Then
	\be\label{elliptic:BDF4:1}
	\Delta \Qu  + \Qa \Qu_x + \Qb \Qu_y + \Qd \Qu   =   \psi \text{ in }  \Omega \quad \text{and} \quad \Qu=g \text{ on } \partial\Omega.
	\ee
	So, the CN, BDF3, and BDF4 methods with the iteration methods \eqref{iteration:CN}, \eqref{iteration:BDF3}, and \eqref{iteration:BDF4}  yield the same linear convection-diffusion equation in \eqref{elliptic:CN:2}, \eqref{elliptic:BDF3:1}, and \eqref{elliptic:BDF4:1}. 
	
	Next, we propose the fourth-order compact 9-point FDM for the linear convection-diffusion equation in the following \cref{subsec:parabolic}. 
	\subsection{Fourth-order compact 9-point FDM }\label{subsec:parabolic}
	Since the  CN, BDF3, and BDF4 methods yield the same linear convection-diffusion equation  \eqref{elliptic:CN:2}, \eqref{elliptic:BDF3:1}, and \eqref{elliptic:BDF4:1}, in this section we construct the fourth-order compact 9-point FDM for   
	\be\label{reformulate}
	\Delta \Qu  + \Qa \Qu_x + \Qb \Qu_y +\Qd \Qu   = \psi,
	\ee
	where $\Qa, \Qb, \Qd, \psi$ are defined in \eqref{sub:c:1}, \eqref{sub:c:2}, and \eqref{sub:c:3}.
	Similarly to \eqref{u20}--\eqref{u:pq},
	\be\label{u:pq:parabolic}
	\begin{split}
		\Qu^{(p,q)}  =&\sum_{(m,n)\in \ind_{p+q}^{1} }\xi_{p,q,m,n} \Qu^{(m,n)}+\sum_{(m,n)\in \ind_{p+q-2} } \eta_{p,q,m,n}   \psi^{(m,n)}, \qquad (p,q)\in \ind_{M}^{2},
	\end{split}
	\ee
	where $ \xi_{p,q,m,n}$ and $\eta_{p,q,m,n}$ are uniquely determined by the high-order partial derivatives of $\Qa$, $\Qb$, and $\Qd$. 
	Similarly to \eqref{Space:S}--\eqref{GHmn},
	\be \label{u:GH:parabolic}
	\Qu(x+x_i,y+y_j)
	=
	\sum_{(m,n)\in \ind^1_{M}}
	\Qu^{(m,n)}G_{M,m,n}(x,y)+\sum_{(m,n)\in \ind_{M-2}}	\psi^{(m,n)} H_{M,m,n}(x,y)+\bo(h^{M+1}),
	\ee
	where  $x,y\in [-h,h]$, and
	\be\label{GHmn:parabolic}
	\begin{split}
		& G_{M,m,n}(x,y):=\frac{x^my^n}{m!n!}+\sum_{(p,q)\in \ind^2_{M} \setminus \ind^2_{m+n-1} } \xi_{p,q,m,n} \frac{x^py^q}{p!q!}, \quad  H_{M,m,n}(x,y):=\sum_{(p,q)\in \ind^2_{M} \setminus \ind^2_{m+n+1} } \eta_{p,q,m,n} \frac{x^py^q}{p!q!},
	\end{split}
	\ee
	each $\xi_{p,q,m,n},   \eta_{p,q,m,n}$ in \eqref{u:pq:parabolic} and \eqref{GHmn:parabolic} belongs to 
	\be\label{Space:S:parabolic}
	S:=\text{span}\left\{ \prod_{ i_1,j_1,v_1,w_1,r_1,s_1\in \N_0}\Qa^{(i_1,j_1)}\Qb^{(v_1,w_1)}\Qd^{(r_1,s_1)}, \dots,  \prod_{ i_k,j_k,v_k,w_k,r_k,s_k\in \N_0}\Qa^{(i_k,j_k)}\Qb^{(v_k,w_k)}\Qd^{(r_k,s_k)}\right\}, 
	\ee
	with $k\in \N$, and the coefficients in $\R$.
	Similarly to  \eqref{L:h:u:h}--\eqref{L:h:u},  we define the linear operator $\mathcal{L}_h \Qu$ and the stencil $\mathcal{L}_h \Qu_h$ of the FDM  as follows:
	\be\label{L:h:u:h:parabolic}
	\begin{split}
		h^{-2}	\mathcal{L}_h \Qu:= h^{-2} \sum_{k,\ell =-1}^{1} C_{k,\ell} \Qu(kh+x_i, \ell h+y_j), \quad 
		h^{-2}	\mathcal{L}_h \Qu_h:= h^{-2} \sum_{k,\ell =-1}^{1}  C_{k,\ell} (\Qu_h)_{i+k,j+\ell}=F_{i,j},
	\end{split}
	\ee
	\be\label{Ckl:parabolic}
	C_{k,\ell}:=\sum_{p=0}^7c_{k,\ell,p} h^p, \quad c_{k,\ell,p}\in \tilde{S},
	\ee
	where
	\be\label{Space:S:tilde:parabolic}
	\tilde{S}:=\text{span}\left\{ \prod_{ i_1,j_1,v_1,w_1,r_1,s_1\in \N_0}\Qa^{(i_1,j_1)}\Qb^{(v_1,w_1)}\Qc^{(r_1,s_1)}, \dots,  \prod_{ i_k,j_k,v_k,w_k,r_k,s_k\in \N_0}\Qa^{(i_k,j_k)}\Qb^{(v_k,w_k)}\Qc^{(r_k,s_k)}\right\},
	\ee
	$\Qa, \Qb$, and $\Qc$ are defined in \eqref{sub:c:1}, \eqref{sub:c:2}, and \eqref{sub:c:3}.
	We restrict each $ c_{k,\ell,p}\in \tilde{S}$ in \eqref{Ckl:parabolic} such that the high-order partial derivatives $\Qa^{(m,n)}$, $\Qb^{(m,n)}$, and $\Qc^{(m,n)}$ do not appear in the denominator, ensuring that each $c_{k,\ell,p}$ is well defined.
	Similarly to \eqref{Fij:order6},
	\be\label{Fij:order6:parabolic}
	F_{i,j}=\text{the terms of } \Big( h^{-2} \sum_{(m,n) \in \ind_{	 5}} \psi^{(m,n)} \sum_{k,\ell =-1}^{1} 	 C_{k,\ell} H_{7,m,n}  (kh, \ell h)\Big) \text{ with degree}   \le 5 \text{ in } h,
	\ee
	where $H_{7,m,n}  (x, y)$ is defined in \eqref{GHmn:parabolic}, and $\psi$ is defined in \eqref{sub:c:1}, \eqref{sub:c:2}, and \eqref{sub:c:3}.
	Similarly to \eqref{L:h:u:h}--\eqref{elliptic:order:M} and \eqref{Ckl:degree:7}--\eqref{ckl0:elliptic}, by the symbolic calculation in Maple, there exist $\{c_{k,\ell,p}\}_{k,\ell=-1,0,1}^{p=0,7}$ with $c_{k,\ell,p}\in \tilde{S}$ satisfying
	\be\label{QE:order:4:reduce:parabolic}
	\begin{split}
		& \sum_{(m,n)\in \ind_{7}^{1}} \Qu^{(m,n)}\sum_{k,\ell=-1}^{1} C_{k,\ell}G_{7,m,n}  (kh, \ell h)-\tfrac{h^6}{90}(\Qa^{(0,1)}-\Qb^{(1,0)})\Qu^{(1,3)}-h^7\zeta=\bo(h^{8}),\\
		& \quad c_{0,0,0}=-\tfrac{10}{3},  \qquad  F_{i,j}|_{h=0}=\psi, \qquad c_{-1,-1,0}=c_{-1,1,0}=c_{1,-1,0}=c_{1,1,0}=\tfrac{1}{6},\\
		& \quad c_{-1,0,0}=c_{1,0,0}=c_{0,-1,0}=c_{0,1,0}=\tfrac{2}{3},
	\end{split}
	\ee
	for any free variables in the following set:
	\be \label{free:parabolic}
	\begin{split}
		&\{c_{-1,0,7}\} \cup \{c_{-1,1,p}\}_{p=6,7}  \cup \{c_{0,-1,7}\} \cup \{c_{0,0,p}\}_{p=6,7}  \\
		&\ \cup \{c_{0,1,p}\}_{p=5,6,7} \cup \{c_{1,-1,p}\}_{p=5,6,7} \cup \{c_{1,0,p}\}_{p=4,\dots,7} \cup \{c_{1,1,p}\}_{p=1,\dots,7},
	\end{split}
	\ee
	where
	\be\label{zeta:parabolic}
	\begin{split}
		\zeta:=&\tfrac{1}{37800}(10\Qa  (6\Qb  \Qc   - 21(\Qa^{(0, 1)} -\Qb^{(1, 0)}) - 8\Qc^{(0, 1)})  -210(\Qa^{(0, 1)} -\Qb^{(1, 0)} +\Qc^{(1, 0)})\Qb   \\
		&   - (49\Qa^{(0, 1)}+ 91\Qb^{(1, 0)})\Qc + 2520c_{1,1,1}(\Qa^{(0, 1)} - \Qb^{(1, 0)}))u^{(1, 3)} - \tfrac{1}{7560} (\Qa  \Qc  +14 \Qc^{(1, 0)} )u^{(1, 4)}\\
		&  + \tfrac{1}{540}(\Qb  \Qc    - \Qc^{(0, 1)})u^{(0, 5)}.
	\end{split}
	\ee
	Similarly to \eqref{elliptic:order:6},  $C_{k,\ell}=\sum_{p=0}^{7}c_{k,\ell,p} h^p$  with $k,\ell=-1,0,1$ satisfying \eqref{QE:order:4:reduce:parabolic} implies
	\be\label{parabolic:order:6}
	h^{-2}	\mathcal{L}_h (\Qu- \Qu_h)=\tfrac{h^4}{90}(\Qa^{(0,1)}-\Qb^{(1,0)})\Qu^{(1,3)}+h^5\zeta+\bo(h^{6}),
	\ee
	and
	$\mathcal{L}_h \Qu_h$ approximates $	\Delta \Qu  +\Qa \Qu_x +\Qb \Qu_y +\Qd \Qu   = \psi$ in \eqref{reformulate}  with  the fourth-order of consistency at $(x_i,y_j)$ in \eqref{xiyj}, where $	\mathcal{L}_h \Qu$ and $\mathcal{L}_h \Qu_h$ are defined in \eqref{L:h:u:h:parabolic}, and $F_{i,j}$ is defined in \eqref{Fij:order6:parabolic}. In our numerical examples, we set all free variables in \eqref{free:parabolic} to zero, so $C_{k,\ell}=\sum_{p=0}^{7}c_{k,\ell,p} h^p$ with $k,\ell=-1,0,1$ can be uniquely determined by solving \eqref{QE:order:4:reduce:parabolic}. 
	
	In summary, we have the fourth-order compact 9-point FDM with the reduced pollution effects of $\bo(h^4)$ and $\bo(h^5)$ in the following theorem:
	\begin{theorem}\label{thm:parabolic:general:reduce}
		Assume $\kappa, u, \alpha, \beta, f$ are all smooth in \eqref{Model：Original:Parabolic},  $\mathcal{L}_h  {\textup \Qu}_h$ is defined in \eqref{L:h:u:h:parabolic}, the nine coefficients
		$C_{k,\ell}=\sum_{p=0}^{7}c_{k,\ell,p} h^p$ with $k,\ell=-1,0,1$ are uniquely determined by solving \eqref{QE:order:4:reduce:parabolic} with  variables  being zero in \eqref{free:parabolic}, the right-hand side $F_{i,j}$ is defined in \eqref{Fij:order6:parabolic}, and  all the functions in $\mathcal{L}_h  {\textup \Qu}_h$ are evaluated at the grid point $(x_i,y_j)$ in \eqref{xiyj}.  Then $h^{-2}	\mathcal{L}_h  {\textup \Qu}_h$
		approximates $	\Delta  {\textup \Qu}  +{\textup \Qa}  {\textup \Qu}_x +{\textup \Qb}  {\textup \Qu}_y  +{\textup \Qd}  {\textup \Qu}  = \psi$ in \eqref{reformulate} with  the fourth-order of consistency at $(x_i,y_j)$, and the truncation error $\tfrac{h^4}{90}({\textup \Qa}^{(0,1)}-{\textup \Qb}^{(1,0)}) {\textup \Qu}^{(1,3)}+h^5\lambda+\bo(h^{6})$, where $\lambda=\zeta$ with $c_{1,1,1}=0$ in \eqref{zeta:parabolic}, and ${\textup \Qa}, {\textup \Qb}, {\textup \Qc}, {\textup \Qd}$, $\psi$ are defined in \eqref{sub:c:1}, \eqref{sub:c:2}, and \eqref{sub:c:3}.
	\end{theorem}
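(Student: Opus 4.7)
The plan is to mimic the proof of \cref{thm:elliptic:general:reduce} (which in turn rests on the chain \eqref{Ckl:degree:7}--\eqref{elliptic:order:6}); the only genuine novelty arising from the extra zeroth-order term $\Qd\Qu$ in \eqref{reformulate} is that the recursive coefficients and hence the $C_{k,\ell}$ must live in the enlarged space $\tilde{S}$ of \eqref{Space:S:tilde:parabolic}. First I would justify the parabolic recursion \eqref{u:pq:parabolic} by repeating \eqref{u20}--\eqref{u:pq:step1} with the new seed $\Qu^{(2,0)}=-\Qu^{(0,2)}-\Qa\Qu^{(1,0)}-\Qb\Qu^{(0,1)}-\Qd\Qu+\psi$; derivatives of $\Qd$ now enter the coefficients $\xi_{p,q,m,n}$ and $\eta_{p,q,m,n}$, but the algebraic shape of the expansion is unchanged. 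Substituting the resulting formula into the Taylor expansion at $(x_i,y_j)$ yields \eqref{u:GH:parabolic}.

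Next I would insert \eqref{u:GH:parabolic} with $M=7$ into the operator \eqref{L:h:u:h:parabolic}, obtaining a $\Qu$-sum $\sum_{(m,n)\in\ind_7^1}\Qu^{(m,n)}\sum_{k,\ell}C_{k,\ell}G_{7,m,n}(kh,\ell h)$, a $\psi$-sum $\sum_{(m,n)\in\ind_5}\psi^{(m,n)}\sum_{k,\ell}C_{k,\ell}H_{7,m,n}(kh,\ell h)$, and a remainder $\bo(h^8)$. The first constraint of \eqref{QE:order:4:reduce:parabolic} forces the $\Qu$-sum to equal $\tfrac{h^6}{90}(\Qa^{(0,1)}-\Qb^{(1,0)})\Qu^{(1,3)}+h^7\zeta$ modulo $\bo(h^8)$, while the definition \eqref{Fij:order6:parabolic} together with the normalization $F_{i,j}|_{h=0}=\psi$ makes $h^2F_{i,j}$ reproduce the $\psi$-sum modulo $\bo(h^8)$. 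Subtracting $\mathcal{L}_h\Qu_h=h^2F_{i,j}$ from $\mathcal{L}_h\Qu$ and dividing by $h^2$ reproduces \eqref{parabolic:order:6}, which is precisely the fourth-order consistency statement with the stated leading pollution term. Finally, the theorem fixes every free parameter in \eqref{free:parabolic} to zero, which in particular sets $c_{1,1,1}=0$ and therefore specializes $\zeta$ in \eqref{zeta:parabolic} to the $\lambda$ announced in the theorem.

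The main obstacle is the symbolic linear-algebra step that is hidden inside the assertion ``there exist $\{c_{k,\ell,p}\}$ in $\tilde{S}$ solving \eqref{QE:order:4:reduce:parabolic}''. One must show that the underdetermined system in the nine coefficients $C_{k,\ell}=\sum_{p=0}^{7}c_{k,\ell,p}h^p$ is consistent once the fixed normalizations $c_{0,0,0}=-\tfrac{10}{3}$, $c_{\pm1,\pm1,0}=\tfrac{1}{6}$, $c_{\pm1,0,0}=c_{0,\pm1,0}=\tfrac{2}{3}$ and $F_{i,j}|_{h=0}=\psi$ are imposed, and that its residual at order $h^7$ has exactly the form \eqref{zeta:parabolic} with the free parameter $c_{1,1,1}$ appearing only through the term $2520\,c_{1,1,1}(\Qa^{(0,1)}-\Qb^{(1,0)})\Qu^{(1,3)}/37800$. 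I would discharge this exactly as in \cref{thm:elliptic:general:reduce}, by running the same Maple symbolic pipeline with $\Qd=\Qc/h$ and its derivatives admitted into the ansatz; once the coefficients are exhibited the rest is a mechanical transcription of \eqref{Ckl:degree:7}--\eqref{elliptic:order:6}.
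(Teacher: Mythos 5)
Your proposal is correct and follows essentially the same route as the paper: the paper's proof of this theorem simply defers to the argument of \cref{thm:elliptic:general:reduce}, i.e.\ the chain \eqref{Ckl:degree:7}--\eqref{elliptic:order:6} adapted to the extra zeroth-order term $\Qd\Qu$ via \eqref{u:pq:parabolic}--\eqref{QE:order:4:reduce:parabolic}, with the existence of the coefficients in $\tilde{S}$ established by the same Maple symbolic computation you invoke. Your reconstruction of the subtraction $h^{-2}\mathcal{L}_h(\Qu-\Qu_h)$ and the specialization $\lambda=\zeta|_{c_{1,1,1}=0}$ matches the paper's \eqref{parabolic:order:6} exactly.
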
	
	\begin{proof}
		The proof follows similar arguments as in \cref{thm:elliptic:general:reduce}.
	\end{proof}
	\begin{proposition}\label{prop:2:ckl0}
		The nine coefficients $C_{k,\ell}=\sum_{p=0}^{7}c_{k,\ell,p} h^p$  with $k,\ell=-1,0,1$ of the FDM	in \cref{thm:parabolic:general:reduce}  satisfy
		\be\label{prop:ckl0:parabolic} 
		\begin{split} 
			& c_{\pm1,\pm1,0}=c_{\pm1,\mp1,0}=\tfrac{1}{6}, \qquad c_{\pm1,0,0}=c_{0,\pm1,0}=\tfrac{2}{3},\qquad  c_{0,0,0}=-\tfrac{10}{3},	\qquad \sum_{k=1}^{-1}\sum_{\ell=1}^{-1} c_{k,\ell,0}=0.
		\end{split}	
		\ee
		That is, the FDM in  \cref{thm:parabolic:general:reduce} satisfies the discrete maximum principle, and generates an M-matrix,   when the mesh size $h$ is sufficiently small.
	\end{proposition}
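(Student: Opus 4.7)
The plan is to read off the claimed identities in \eqref{prop:ckl0:parabolic} directly from the defining linear system \eqref{QE:order:4:reduce:parabolic}, and then invoke the same sign/sum framework used in \cref{prop:1:ckl0} to conclude the M-matrix and discrete maximum principle properties. Since \cref{thm:parabolic:general:reduce} fixes each $C_{k,\ell}=\sum_{p=0}^{7}c_{k,\ell,p}h^p$ by solving \eqref{QE:order:4:reduce:parabolic} with every free variable in \eqref{free:parabolic} set to zero, and since \eqref{QE:order:4:reduce:parabolic} explicitly imposes the constants $c_{0,0,0}=-10/3$, $c_{\pm 1,\pm 1,0}=c_{\pm 1,\mp 1,0}=1/6$, and $c_{\pm 1,0,0}=c_{0,\pm 1,0}=2/3$, the values in \eqref{prop:ckl0:parabolic} are immediate. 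The sum identity then follows by direct arithmetic:
\[
\sum_{k=-1}^{1}\sum_{\ell=-1}^{1} c_{k,\ell,0} \;=\; 4\cdot\tfrac{1}{6}+4\cdot\tfrac{2}{3}-\tfrac{10}{3}\;=\;\tfrac{2}{3}+\tfrac{8}{3}-\tfrac{10}{3}\;=\;0.
\]

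Next, to deduce the M-matrix/discrete maximum principle property, I would mirror the argument in \cref{prop:1:ckl0}. Because $C_{k,\ell}|_{h=0}=c_{k,\ell,0}$ and every $C_{k,\ell}$ is a polynomial in $h$ whose coefficients live in $\tilde{S}$, continuity in $h$ gives, for sufficiently small $h$,
\[
C_{0,0}<0,\qquad C_{k,\ell}>0 \ \text{ for }(k,\ell)\neq(0,0).
\]
Multiplying both sides of the FDM stencil \eqref{L:h:u:h:parabolic} by $-1$ (the analogue of \eqref{negative:Lh}) then yields a stencil whose center coefficient is positive and whose eight off-center coefficients are negative, that is, the sign condition in \eqref{sign:and:sum} is satisfied. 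Combined with the sum condition inherited from $\sum c_{k,\ell,0}=0$ (with strict positivity at rows adjacent to the Dirichlet boundary, as in the standard framework), the criteria in \citep{Feng2022,FHM2023,Feng2024,LiIto2001,LiZhang2020} immediately imply that the system matrix is an M-matrix and the numerical solution satisfies the discrete maximum principle.

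The only conceptual obstacle is that the parabolic case contains the extra coefficient $\Qd$ (and hence $\Qc$) in the stencil, so one should check that the prescribed $c_{k,\ell,0}$ constraints in \eqref{QE:order:4:reduce:parabolic} are genuinely compatible with the underlying linear system for uniqueness—this compatibility was verified via symbolic computation in Maple just prior to \cref{thm:parabolic:general:reduce}, so no additional work is needed. Hence, apart from this verification (which the excerpt has already performed), the proof reduces to the same two-line argument used for \cref{prop:1:ckl0}, and I would simply write: \emph{The identities \eqref{prop:ckl0:parabolic} follow from the explicit constraints in \eqref{QE:order:4:reduce:parabolic}, and the M-matrix together with the discrete maximum principle property follows as in the proof of \cref{prop:1:ckl0}.}
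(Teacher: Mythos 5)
Your proposal is correct and follows essentially the same route as the paper: the paper's proof simply states that the argument of \cref{prop:1:ckl0} carries over, i.e., the values $c_{k,\ell,0}$ are read off directly from the constraints imposed in \eqref{QE:order:4:reduce:parabolic}, the zero sum is immediate arithmetic, and negating the stencil as in \eqref{negative:Lh} yields the sign and sum conditions of \eqref{sign:and:sum} for sufficiently small $h$. Your additional remark about compatibility of the constraints with the extra $\Qd$ term is a reasonable observation, and, as you note, it is already covered by the Maple verification preceding \cref{thm:parabolic:general:reduce}.
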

	\begin{proof}
		The proof follows similar arguments as in \cref{prop:1:ckl0}.
	\end{proof}
	Combining the FDM in \cref{thm:parabolic:general:reduce} with the iteration methods \eqref{iteration:CN}, \eqref{iteration:BDF3}, and \eqref{iteration:BDF4}, we provide the following \cref{algm2} with the CN method,  \cref{algm3} with the BDF3 method, and \cref{algm4} with the BDF4 method to solve \eqref{Model：Original:Parabolic}. Recall that $u$ is the exact solution of \eqref{Model：Original:Parabolic}, and $u^n=u|_{t=t_n=n\tau}$ in \eqref{def:tau}.
	Now, we define that $u^n_h$ is the numerical solution computed by our proposed FDM and the iteration method at $t=t_n=n\tau$, where $\tau$ is defined in \eqref{def:tau} and $N_2\tau=1$.
	\begin{algorithm}(CN method)\label{algm2}
		Step 1: Apply the first and second statements in the iteration method \eqref{iteration:CN} and the fourth-order FDM in \cref{thm:parabolic:general:reduce} to solve \eqref{elliptic:CN:2} with \eqref{sub:c:1}  to compute ${\textup \Qu}_h$ after 20 iterations. 
		Step 2: $u_h^{n+1/2}={\textup \Qu}_h$ and  $u_h^{n+1}=2u_h^{n+1/2}-u_h^{n}$.
		Step 3: Repeat steps 1-2 with $n:=n+1$ until $n=N_2-1$.
	\end{algorithm}
	\begin{algorithm}(BDF3 method)\label{algm3}
		Step 1: Apply the CN method to compute $u^{1}$, $u^{2}$.
		Step 2: Apply the iteration method \eqref{iteration:BDF3} and the fourth-order FDM in  \cref{thm:parabolic:general:reduce}  to solve \eqref{elliptic:BDF3:1} with \eqref{sub:c:2} to compute ${\textup \Qu}_h$ after 20 iterations.
		Step 3:  $u_h^{n+3}={\textup \Qu}_h$ and repeat the step 2 with $n:=n+1$ until $n=N_2-3$.
	\end{algorithm}	
	\begin{algorithm}(BDF4 method)\label{algm4}
		Step 1: Apply the CN method to compute $u^{1}$, $u^{2}$, and use BDF3 method to calculate $u^{3}$.
		Step 2: Apply the iteration method \eqref{iteration:BDF4} and the fourth-order FDM in  \cref{thm:parabolic:general:reduce}  to solve \eqref{elliptic:BDF4:1} with \eqref{sub:c:3} to compute ${\textup \Qu}_h$ after 20 iterations.
		Step 3:  $u_h^{n+4}={\textup \Qu}_h$ and repeat the step 2 with $n:=n+1$ until $n=N_2-4$.
	\end{algorithm}	
	\subsection{Approximations of the high-order partial derivatives of $\textnormal{\Qa}^{(m,n)}$, $\textnormal{\Qb}^{(m,n)}$, $\textnormal{\Qc}^{(m,n)}$, and $\psi^{(m,n)}$ }
	By the symbolic calculation of \eqref{QE:order:4:reduce:parabolic} of FDM in \cref{thm:parabolic:general:reduce}, we  need $\Qa^{(m,n)}$,  $\Qb^{(m,n)}$,  $\Qc^{(m,n)}$ with $m+n\le 4$ and  $\psi^{(m,n)}$ with $m+n\le 5$. As $\kappa$ and $f$ are available in \eqref{Model：Original:Parabolic},  the expressions of   \eqref{sub:c:1}, \eqref{sub:c:2}, and \eqref{sub:c:3} imply that, we need to evaluate
	\be\label{auk:mn:parabolic}
	\left(\frac{\alpha_u(u)} {  \kappa}\right)^{(m,n)} \  \text{and} \  \left(\frac{\beta_u(u)} {  \kappa}\right)^{(m,n)} \  \text{with} \  m+n\le4; \quad \text{and} \quad \left(\frac{u} {  \kappa}\right)^{(m,n)} \  \text{with} \  m+n\le5.
	\ee
	We use the FDM in \cref{amn:bmn} to evaluate the high-order partial derivatives in \eqref{auk:mn:parabolic}.
	\section{Numerical experiments}\label{sec:Numeri}
	Recall that 
	the spatial  domain $\Omega=(0,1)^2$, the temporal domain $I=[0,1]$, $u=u(x,y)$ and $u=u(x,y,t)$ are the exact solutions of the model problems \eqref{Model：Original:Elliptic} and \eqref{Model：Original:Parabolic}, respectively, 
	$u_h$ is the numerical solution of \eqref{Model：Original:Elliptic} computed by \cref{algm1}, and $u^n_h$ is the numerical solution of \eqref{Model：Original:Parabolic} at $t=t_n$ computed by \cref{algm2,algm3,algm4}, where
	\[
	(x_i,y_j)=(i h,jh), \qquad h=1/N_1,\qquad t_{n}=n \tau, \qquad \tau=1/N_2=rh, \qquad N_1/N_2=r, \qquad N_1,N_2\in \N,
	\]
	$i,j=0,\ldots,N_1$ and $n=0,\ldots,N_2$. In our numerical examples, we choose  $r=1/2$ for the CN method, and $r=1$ for the BDF3 and BDF4 methods.

	To verify the accuracy and the convergence rates of \cref{algm1,algm2,algm3,algm4}, we use the following $l_2$ and $l_{\infty}$  norms of errors for the model problems \eqref{Model：Original:Elliptic} and \eqref{Model：Original:Parabolic}, 
	\[
	\begin{split}
		&	\|u_{h}-u\|_{2}^2:= h^2\sum_{i,j=0}^{N_1} \left((u_h)_{i,j}-u(x_i,y_j)\right)^2,\qquad \|u_h-u\|_\infty
		:=\max_{0\le i,j\le N_1} \left|(u_h)_{i,j}-u(x_i,y_j)\right|,  \\ 
		& \text{for the steady nonlinear   convection-diffusion equation }  \eqref{Model：Original:Elliptic};\\
		&	\|u_{h}-u\|_{2}^2:= h^2\sum_{i,j=0}^{N_1} \left((u^{N_2}_h)_{i,j}-u(x_i,y_j,1)\right)^2, \qquad \|u_h-u\|_\infty
		:=\max_{0\le i,j\le N_1} \left|(u^{N_2}_h)_{i,j}-u(x_i,y_j,1)\right|,\\
		& \text{for the  time-dependent nonlinear convection-diffusion equation }  \eqref{Model：Original:Parabolic};
	\end{split}
	\]
	where $(u_{h})_{i,j}$ and $(u^{N_2}_{h})_{i,j}$ are values of $u_h$ and $u^{N_2}_h$ at $(x_i, y_j)$, respectively. 
	In this section, we provide 2 examples for \eqref{Model：Original:Elliptic} and \eqref{Model：Original:Parabolic} in  \cref{sec:example:elliptic} and \cref{sec:example:parabolic}, respectively. 
	\subsection{Two examples of the steady nonlinear   convection-diffusion equation}\label{sec:example:elliptic}
	In the following \cref{Example:1}, we choose the variable diffusion coefficient $\kappa=\kappa(x,y)$ to examine the performance of  \cref{algm1} with \cref{thm:elliptic:general,thm:elliptic:general:reduce}.
	\begin{example}\label{Example:1}
		\normalfont
		The exact solution, the diffusion coefficient, and the nonlinear convection term  in \eqref{Model：Original:Elliptic} are given by
		\begin{align*}
			&u=\sin(3x)\cos(7y),\qquad \kappa= 2+\sin (5x-2y),\qquad  \alpha= \cos(u), \qquad \beta= \sin(u), 
		\end{align*}
		$f$ and $g$  are obtained by plugging the above functions into \eqref{Model：Original:Elliptic}.
		The numerical results are presented in \cref{Example:1:table} and \cref{Example:1:fig}.	
		As we reduce the pollution effects of $\bo(h^4)$ and $\bo(h^5)$ in \cref{thm:elliptic:general:reduce},  we observe that \cref{thm:elliptic:general:reduce} produces the smaller errors than \cref{thm:elliptic:general} in \cref{Example:1:table}. Furthermore, the convergence rates of \cref{algm1} with \cref{thm:elliptic:general:reduce} are higher than 4 when $h\ge 1/2^5$.
	\end{example}
	\begin{table}[htbp]
		\caption{The performance in \cref{Example:1} of the proposed  \cref{algm1} with \cref{thm:elliptic:general,thm:elliptic:general:reduce}, where 'RPE' denotes the 'reduced pollution effect'.}
		\centering
		{\renewcommand{\arraystretch}{1.5}
		\scalebox{1.1}{
			\setlength{\tabcolsep}{3mm}{
				\begin{tabular}{c|c|c|c|c|c|c|c|c}
					\hline
					\multicolumn{1}{c|}{} &
					\multicolumn{4}{c|}{\cref{algm1} with \cref{thm:elliptic:general}} &
					\multicolumn{4}{c}{ \cref{algm1} with \cref{thm:elliptic:general:reduce} } \\
					\hline
\multicolumn{1}{c|}{} &
\multicolumn{4}{c|}{The 4th-order FDM without RPE} &
\multicolumn{4}{c}{The 4th-order FDM with  RPE } \\
					\cline{1-9}
					$h$&   ${\|u_{h}-u\|_2}$    &order &   $\|u_{h}-u\|_\infty$    &order &   ${\|u_{h}-u\|_2}$    &order &   $\|u_{h}-u\|_{\infty}$    &order \\
					\hline
					$1/2^3$  &  1.2164E-03  &    &  3.0974E-03  &    &  1.1144E-04  &    &  3.7841E-04  &  \\
					$1/2^4$  &  7.2612E-05  &  4.07  &  2.0035E-04  &  3.95  &  2.2191E-06  &  5.65  &  5.9879E-06  &  5.98\\
					$1/2^5$  &  4.5074E-06  &  4.01  &  1.2469E-05  &  4.01  &  1.1455E-07  &  4.28  &  2.9631E-07  &  4.34\\
					$1/2^6$  &  2.8133E-07  &  4.00  &  7.7980E-07  &  4.00  &  7.1254E-09  &  4.01  &  1.8216E-08  &  4.02\\
					$1/2^7$  &  1.7585E-08  &  4.00  &  4.8776E-08  &  4.00  &  4.4590E-10  &  4.00  &  1.1612E-09  &  3.97\\
					$1/2^8$  &  1.0995E-09  &  4.00  &  3.0505E-09  &  4.00  &  2.7868E-11  &  4.00  &  7.3088E-11  &  3.99\\				
					\hline
		\end{tabular}}}}
		\label{Example:1:table}
	\end{table}
	\begin{figure}[htbp]
		\centering
		\begin{subfigure}[b]{0.24\textwidth}
			\includegraphics[width=4.8cm,height=4.8cm]{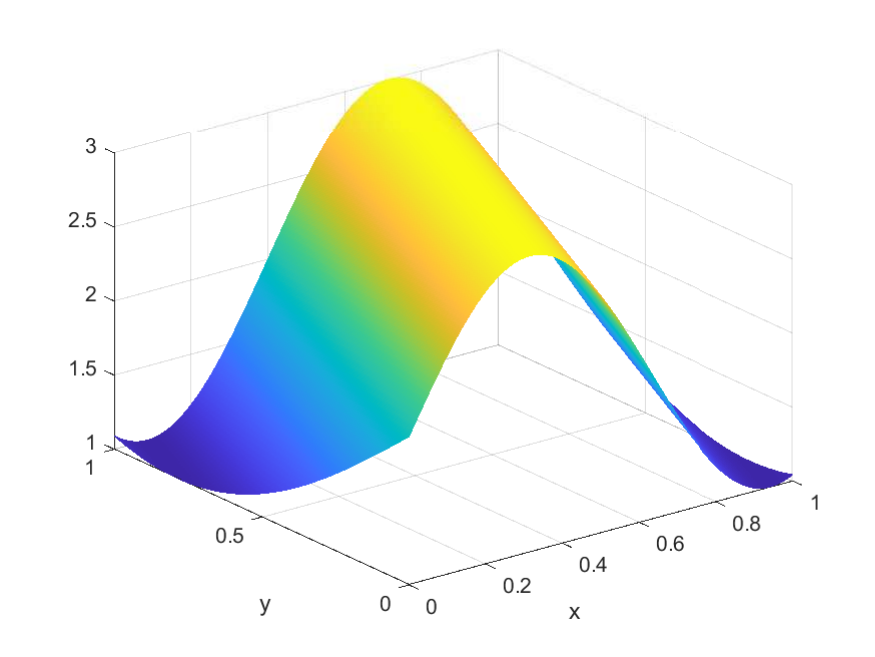}
		\end{subfigure}
		\begin{subfigure}[b]{0.24\textwidth}
			\includegraphics[width=4.8cm,height=4.8cm]{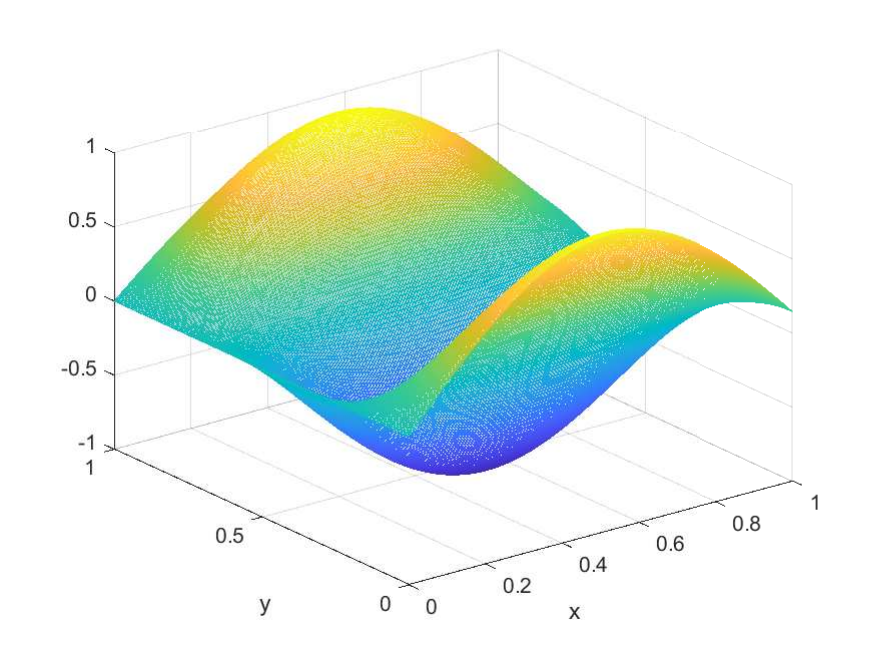}
		\end{subfigure}
		\begin{subfigure}[b]{0.24\textwidth}
			\includegraphics[width=4.8cm,height=4.8cm]{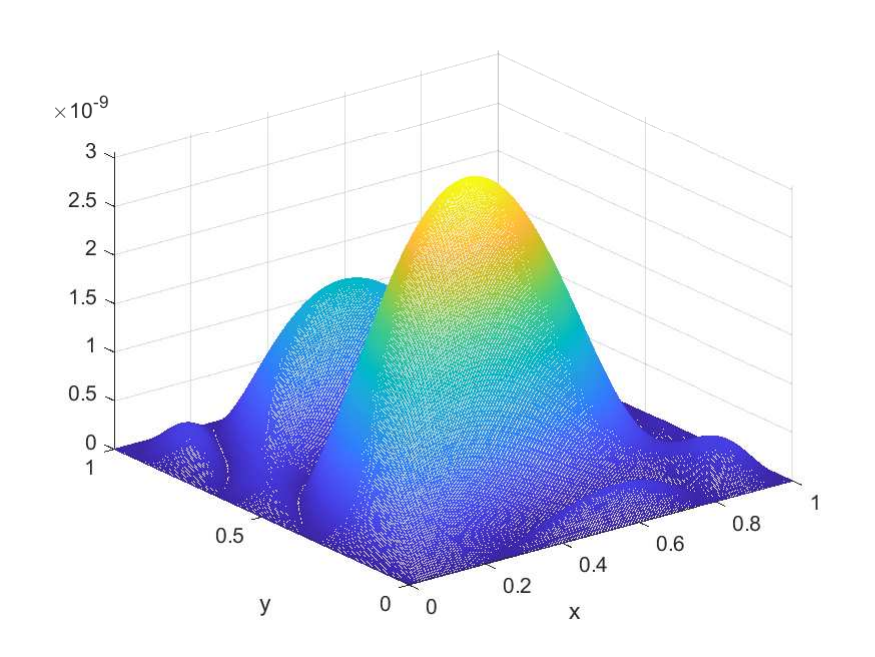}
		\end{subfigure}
		\begin{subfigure}[b]{0.24\textwidth}
			\includegraphics[width=4.8cm,height=4.8cm]{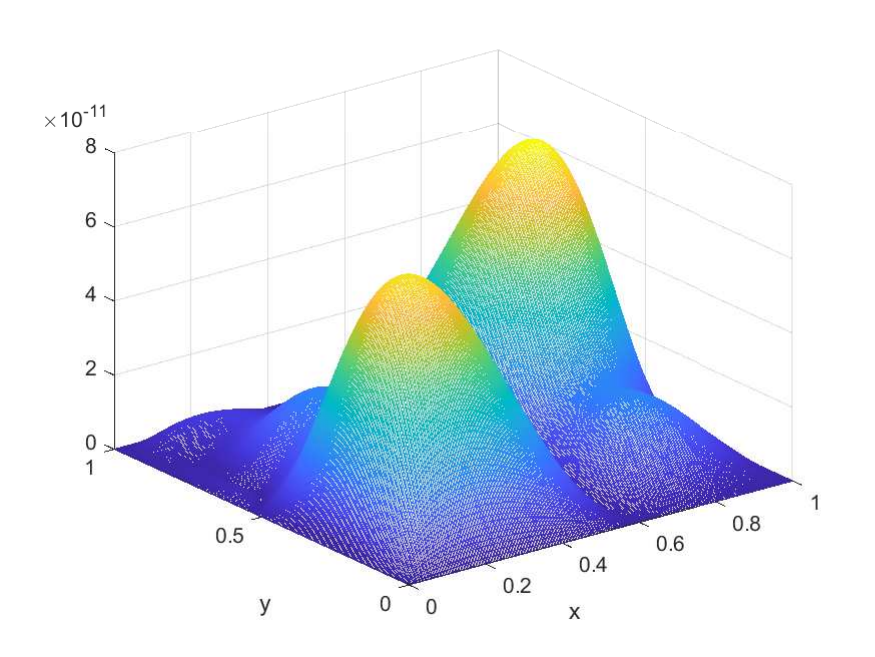}
		\end{subfigure}
		\caption
		{\cref{Example:1}: The diffusion coefficient $\kappa$ (first),  the exact solution $u$ (second), the error $|u_h-u|$ with the numerical solution $u_h$ computed by \cref{algm1} and \cref{thm:elliptic:general} (third), and  the error $|u_h-u|$ with the numerical solution $u_h$ computed by \cref{algm1} and \cref{thm:elliptic:general:reduce} (fourth) on the closure of the spatial domain $[0,1]^2$ with $h=2^{-8}$.}
		\label{Example:1:fig}
	\end{figure}	
	In the following \cref{Example:2}, we choose the constant $\kappa$ to compare \cref{algm1} with the discontinuous Galerkin  method constructed in \citep{Nguyen2009}. Since we do not apply the postprocessing procedure in  \cref{algm1}, we compare the numerical results from \citep{Nguyen2009} without the postprocessing procedure to ensure a fair comparison. 
	\begin{example}\label{Example:2}
		\normalfont
		The exact solution, the diffusion coefficient, and the nonlinear convection term  in \eqref{Model：Original:Elliptic} are given by
		\begin{align*}
			&\qquad u=xy\tanh ( (1-x)/{\kappa})\tanh ( (1-y)/{\kappa}),\qquad \kappa=1/10,\qquad \alpha= u^2/2, \qquad \beta= u^2/2.
		\end{align*}
		The numerical results are presented in \cref{Example:2:table} and \cref{Example:2:fig}.	
		The error	from  \cref{algm1} with \cref{thm:elliptic:general:reduce} is less than one-sixth of that in \citep{Nguyen2009} when $h=1/2^6$ in \cref{Example:2:table}.
		We do not have the data of \citep{Nguyen2009} if $h<1/2^6$, but \cref{Example:2:table} indicates that \cref{algm1}  with \cref{thm:elliptic:general:reduce} achieves a stable convergence order of 4 if $h<1/2^6$. So we can expect that the errors of \citep{Nguyen2009} are approximately six times larger than those of our proposed method when $h<1/2^6$.
		Furthermore, \cref{algm1} generates a matrix with only 9 nonzero bands, whereas \citep{Nguyen2009} requires more than 9 nonzero bands to produce the results in \cref{Example:2:table}.
	\end{example}
	\begin{table}[htbp]
		\caption{The performance in \cref{Example:2} of the proposed  \cref{algm1} with \cref{thm:elliptic:special,thm:elliptic:general:reduce}, where 'RPE' denotes the 'reduced pollution effect'. The ratio $\Qr$ is equal to  ${\|u_{h}-u\|_2}$ of \citep{Nguyen2009} divided by ${\|u_{h}-u\|_2}$ of \cref{algm1} with \cref{thm:elliptic:general:reduce}.}
		\centering
			{\renewcommand{\arraystretch}{1.5}
		\scalebox{1}{
			\setlength{\tabcolsep}{1.2mm}{
				\begin{tabular}{c|c|c|c|c|c|c|c}
					\hline
					\multicolumn{1}{c|}{} &
					\multicolumn{2}{c|}{ \cref{thm:elliptic:special} (the 4th-order FDM)  } &
					\multicolumn{2}{c|}{  \cref{thm:elliptic:general:reduce} (the 4th-order FDM)  }& 
					\multicolumn{2}{c|}{ \citep{Nguyen2009}  (the 4th-order DG) }&
					\multicolumn{1}{c}{ } \\
					\hline
					\multicolumn{1}{c|}{} &
					\multicolumn{2}{c|}{ Without RPE } &
					\multicolumn{2}{c|}{ With RPE }& 
					\multicolumn{2}{c|}{  }&
					\multicolumn{1}{c}{ } \\
					\cline{1-8}
					&     &  &   col4 &    &   col6  &   &   $\Qr=$ col6/col4 \\
					\hline
					$h$& \hspace{1cm}  ${\|u_{h}-u\|_2}$ \hspace{1cm}    & order &     \hspace{1cm} ${\|u_{h}-u\|_2}$  \hspace{1cm}  &  order  &    \hspace{1cm} ${\|u_{h}-u\|_2}$  \hspace{1cm}  & order  &   $\Qr$ \\
					\hline
					$1/2^3$  &  1.0157E-02  &    &  1.1651E-02  &    &  5.97E-04  &    &  0.05 \\
					$1/2^4$  &  3.9676E-04  &  4.68  &  6.6129E-05  &  7.46  &  4.14E-05  &  3.85  &  0.63 \\
					$1/2^5$  &  3.3975E-05  &  3.55  &  8.8686E-07  &  6.22  &  2.79E-06  &  3.89  &  3.15 \\
					$1/2^6$  &  2.0859E-06  &  4.03  &  2.8940E-08  &  4.94  &  1.77E-07  &  3.98  &  6.12 \\
					$1/2^7$  &  1.3078E-07  &  4.00  &  1.7415E-09  &  4.05  &    &    &  \\
					$1/2^8$  &  8.2163E-09  &  3.99  &  1.1497E-10  &  3.92  &    &    &  \\
					$1/2^9$  &  5.1521E-10  &  4.00  &  7.6643E-12  &  3.91  &    &    &  \\				
					\hline
		\end{tabular}}}}
		\label{Example:2:table}
	\end{table}
	\begin{figure}[htbp]
		\centering
		\begin{subfigure}[b]{0.24\textwidth}
			\includegraphics[width=4.8cm,height=4.8cm]{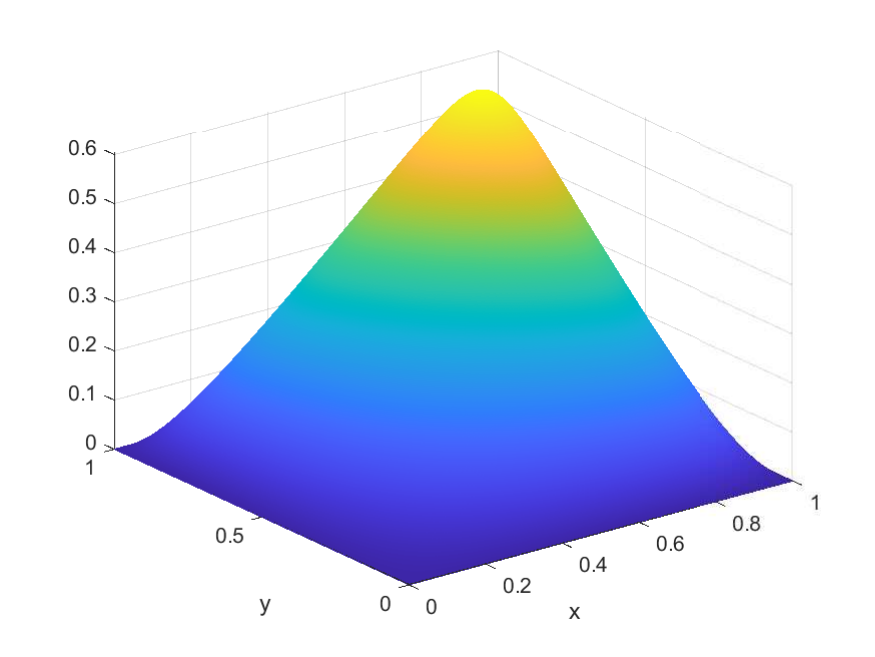}
		\end{subfigure}
		\begin{subfigure}[b]{0.24\textwidth}
			\includegraphics[width=4.8cm,height=4.8cm]{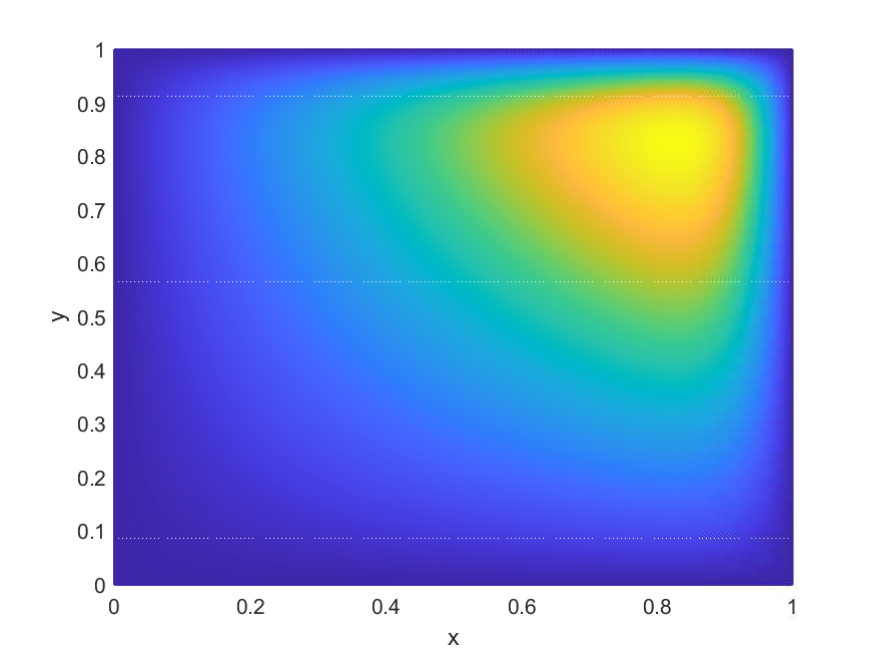}
		\end{subfigure}
		\begin{subfigure}[b]{0.24\textwidth}
			\includegraphics[width=4.8cm,height=4.8cm]{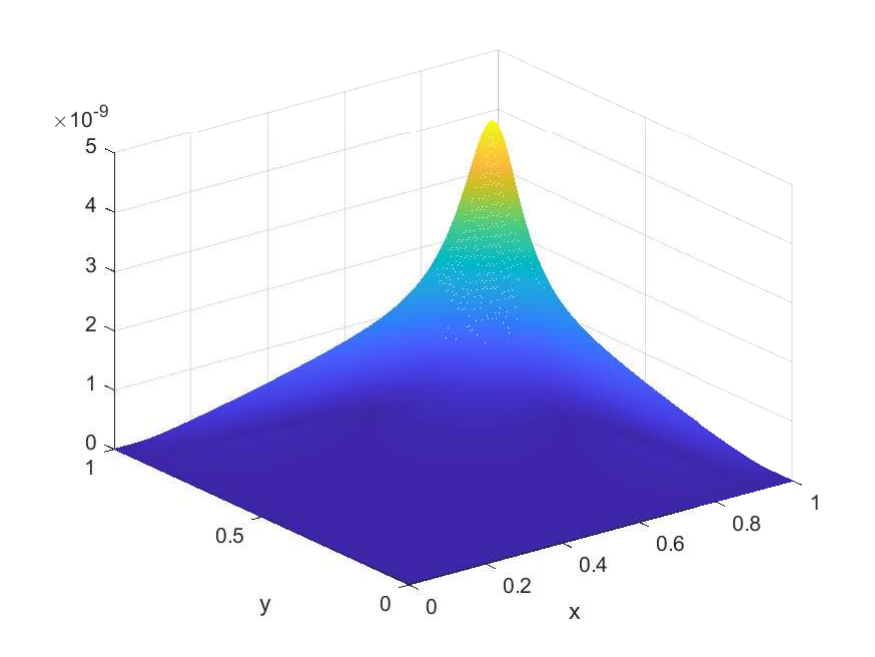}
		\end{subfigure}
		\begin{subfigure}[b]{0.24\textwidth}
			\includegraphics[width=4.8cm,height=4.8cm]{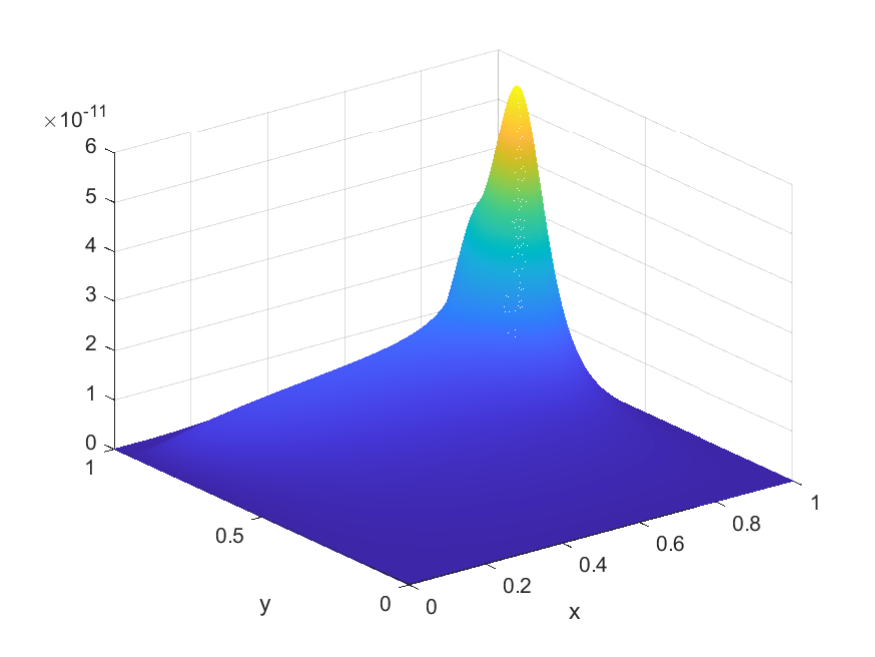}
		\end{subfigure}
		\caption
		{\cref{Example:2}:   The exact solution $u$ (first and second), the error $|u_h-u|$  with the numerical solution $u_h$ computed by \cref{algm1} with \cref{thm:elliptic:special} (third), and  the error $|u_h-u|$  with the numerical solution $u_h$ computed by \cref{algm1} with \cref{thm:elliptic:general:reduce} (fourth) on the closure of the spatial domain $[0,1]^2$ with $h=2^{-9}$.}
		\label{Example:2:fig}
	\end{figure}	
	Next, we present 2 examples to verify the accuracy and the convergence rates of \cref{algm2,algm3,algm4} for the  time-dependent nonlinear convection-diffusion equation \eqref{Model：Original:Parabolic} in the following \cref{sec:example:parabolic}.  
	\subsection{Two examples of the  time-dependent nonlinear convection-diffusion equation}\label{sec:example:parabolic}
	Recall that $\tau=rh$ in \eqref{sub:c:1}, \eqref{sub:c:2}, and \eqref{sub:c:3}. In the following \cref{Example:3,Example:4}, we choose $r=1/2$ in \cref{algm2} and $r=1$ in \cref{algm3,algm4}. 
	\begin{example}\label{Example:3}
		\normalfont
		The exact solution, the diffusion coefficient, and the nonlinear convection term  in \eqref{Model：Original:Parabolic} are given by
		\begin{align*}
			&u=\sin(3t)\cos(2x-y),\qquad \kappa= 3+\cos (x+3y+t),\qquad  \alpha= -u^3/3, \qquad \beta= \sin(u).
		\end{align*}
		The numerical results are presented in \cref{Example:3:table} and \cref{Example:3:fig}.	 \cref{Example:3:table} confirms the third-order and fourth-order convergence rates of the BDF3 method in \cref{algm3} and the BDF4 method in \cref{algm4}, respectively, for the diffusion coefficient $\kappa=\kappa(x,y,t)$.
	\end{example}
	\begin{table}[htbp]
		\caption{The performance in \cref{Example:3} of the proposed  \cref{algm3,algm4}.}
		\centering
			{\renewcommand{\arraystretch}{1.5}
		\scalebox{1.2}{
			\setlength{\tabcolsep}{2mm}{
				\begin{tabular}{c|c|c|c|c|c|c|c|c|c}
					\hline
					\multicolumn{1}{c}{} &
					\multicolumn{1}{c|}{} &
					\multicolumn{4}{c|}{\cref{algm3} with $\tau=h$ (BDF3) } &
					\multicolumn{4}{c}{ \cref{algm4} with $\tau=h$ (BDF4)} \\
					\hline
					\multicolumn{1}{c}{} &
					\multicolumn{1}{c|}{} &
					\multicolumn{4}{c|}{Using the 4th-order FDM  } &
					\multicolumn{4}{c}{ Using the 4th-order FDM } \\
					\cline{1-10}
					$h$& $\tau $&  ${\|u_{h}-u\|_2}$    &order &   $\|u_{h}-u\|_\infty$    &order &   ${\|u_{h}-u\|_2}$    &order &   $\|u_{h}-u\|_{\infty}$    &order \\
					\hline
					$1/2^3$  & $1/2^3$  &  3.1777E-04  &    &  6.0055E-04  &    &  2.3851E-04  &    &  4.7264E-04  &  \\
					$1/2^4$  & $1/2^4$  &  2.7632E-05  &  3.52  &  5.4008E-05  &  3.48  &  8.2563E-06  &  4.85  &  1.5931E-05  &  4.89\\
					$1/2^5$  & $1/2^5$  &  2.5943E-06  &  3.41  &  5.0863E-06  &  3.41  &  5.8285E-07  &  3.82  &  1.1277E-06  &  3.82\\
					$1/2^6$  & $1/2^6$  &  2.6848E-07  &  3.27  &  5.2770E-07  &  3.27  &  3.7437E-08  &  3.96  &  7.2471E-08  &  3.96\\
					$1/2^7$  & $1/2^7$  &  3.0023E-08  &  3.16  &  5.9143E-08  &  3.16  &  2.3653E-09  &  3.98  &  4.5802E-09  &  3.98\\
					$1/2^8$  & $1/2^8$  &  3.5303E-09  &  3.09  &  6.9619E-09  &  3.09  &  1.4867E-10  &  3.99  &  2.8794E-10  &  3.99\\				
					\hline
		\end{tabular}}}}
		\label{Example:3:table}
	\end{table}
	\begin{figure}[htbp]
		\centering
		\begin{subfigure}[b]{0.24\textwidth}
			\includegraphics[width=4.8cm,height=4.8cm]{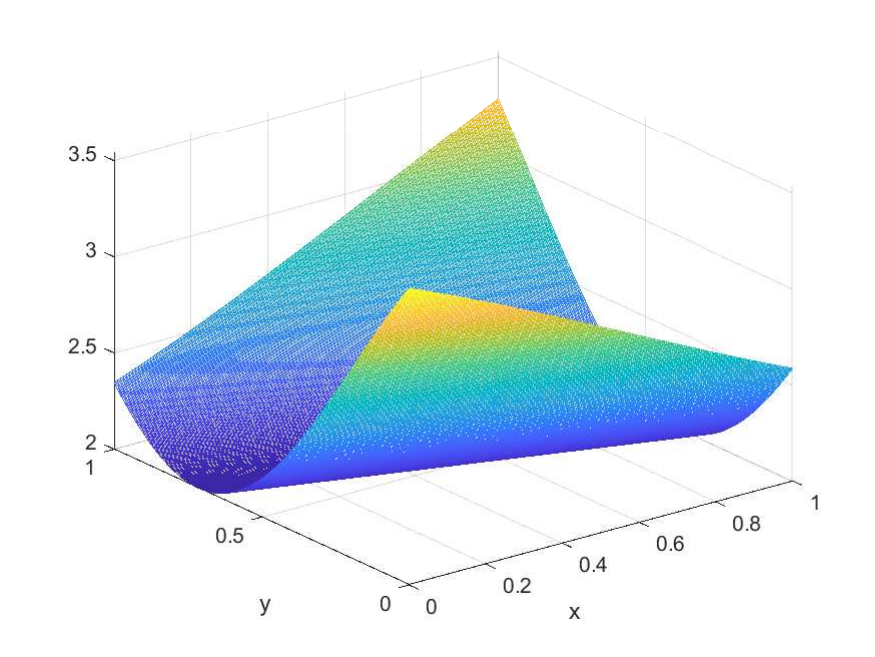}
		\end{subfigure}
		\begin{subfigure}[b]{0.24\textwidth}
			\includegraphics[width=4.8cm,height=4.8cm]{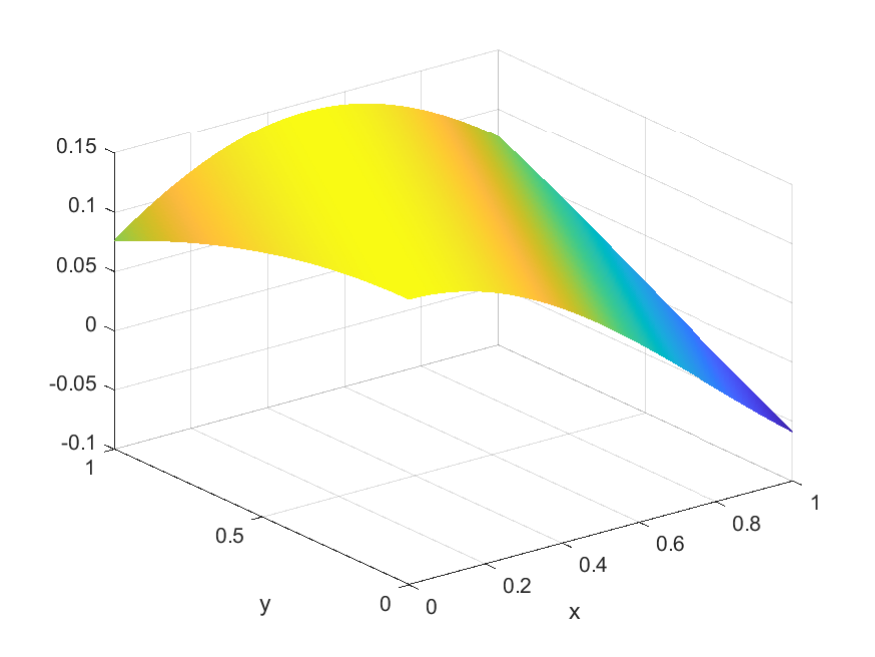}
		\end{subfigure}
		\begin{subfigure}[b]{0.24\textwidth}
			\includegraphics[width=4.8cm,height=4.8cm]{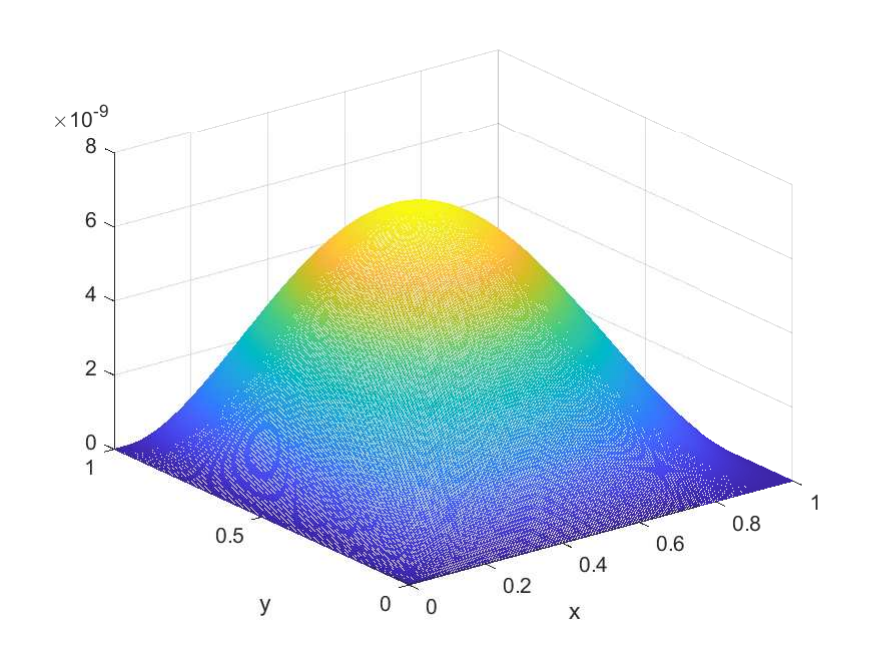}
		\end{subfigure}
		\begin{subfigure}[b]{0.24\textwidth}
			\includegraphics[width=4.8cm,height=4.8cm]{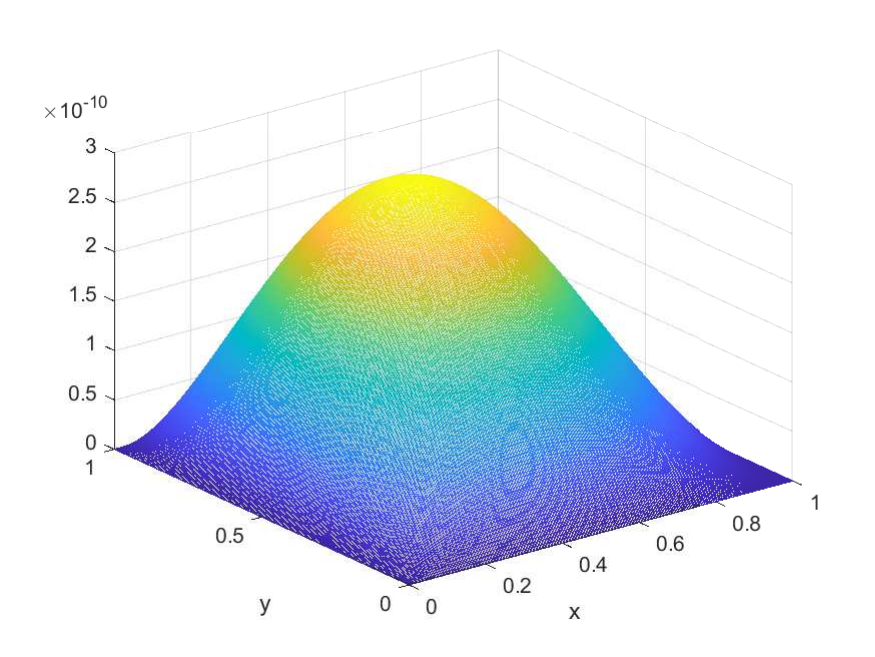}
		\end{subfigure}
		\caption
		{\cref{Example:3}: The diffusion coefficient $\kappa$ at $t=1$ (first),  the exact solution $u$ at $t=1$ (second), the error $|u_h-u|$ at $t=1$ with the numerical solution $u_h$ computed by \cref{algm3}  (third), and the error $|u_h-u|$ at $t=1$ with the numerical solution $u_h$ computed by \cref{algm4}  (fourth)  on the closure of the spatial domain  $[0,1]^2$ with $h=2^{-8}$.}
		\label{Example:3:fig}
	\end{figure}	
	To demonstrate the efficiency and accuracy of our FDM, a comparison with the results in \citep{Nguyen2009} is presented in the following \cref{Example:4}.
	\begin{example}\label{Example:4}
		\normalfont
		The exact solution, the diffusion coefficient, and the nonlinear convection term  in \eqref{Model：Original:Parabolic} are given by
		\begin{align*}
			&u=(\exp(t)-1)xy\tanh ( (1-x)/{\kappa})\tanh ( (1-y)/{\kappa}),\qquad \kappa= 1/10,\qquad  \alpha= u^2/2, \qquad \beta= u^2/2. 
		\end{align*}
		The numerical results are presented in \cref{Example:4:table} and \cref{Example:4:fig}.	We note that the results in \cref{Example:4:table} from \citep{Nguyen2009} are computed by the BDF3 method,  while the proposed \cref{algm2} and \cref{algm3} use the CN and BDF3 methods, respectively. According to \cref{Example:4:table}, even though \cref{algm2} is second-order accurate in the temporal discretization, and uses a larger time step $\tau$ than \citep{Nguyen2009}, \cref{algm2} yields the smaller errors than those from the third-order BDF3 method in \citep{Nguyen2009}. Furthermore, if we apply the same BDF3 method for the time discretization, then the error from \citep{Nguyen2009} is approximately 63 times greater than that of \cref{algm3} when $h=1/2^6$, even the time step of \cref{algm3} is approximately 3 times larger than that of \citep{Nguyen2009}. Since we reduce the truncation errors of $\bo(h^4)$ and $\bo(h^5)$ in \cref{algm2,algm3}, the numerical orders of \cref{algm2,algm3} are higher than 2 and 3, respectively if $h\ge 1/2^7$. When $h=1/2^8$, we obtain the desired convergence rates 2 and 3. Finally,  \cref{algm2,algm3} form a nine-band matrix, but the number of nonzero bands in \citep{Nguyen2009} to generate the results in \cref{Example:4:table} is higher than 9.
	\end{example}
	\begin{table}[htbp]
		\caption{The performance in \cref{Example:4} of the proposed  \cref{algm2,algm3}. The ratios $\Qr_2$ and $\Qr_3$  are equal to  ${\|u_{h}-u\|_2}$ of \citep{Nguyen2009} divided by ${\|u_{h}-u\|_2}$ of \cref{algm2} and \cref{algm3}, respectively. }
		\centering
			{\renewcommand{\arraystretch}{1.5}
		\scalebox{1}{
			\setlength{\tabcolsep}{1.1mm}{
				\begin{tabular}{c|c|c|c|c|c|c|c|c|c|c|c}
					\hline
					\multicolumn{1}{c|}{} &
					\multicolumn{3}{c|}{  \cref{algm2} with $\tau=h/2$ (CN)}& 
					\multicolumn{3}{c|}{  \cref{algm3}  with $\tau=h$ (BDF3)}& 
					\multicolumn{3}{c|}{ \citep{Nguyen2009}  with $\tau=1/200$ (BDF3)}&
					\multicolumn{1}{c|}{  } &
					\multicolumn{1}{c}{  }  \\
					\cline{1-12}
					&    & col3     &   &  &   col6  &   &  & col9  &   &   col9/col3 &  col9/col6  \\
					\hline
					$h$&   $\tau$ &  \hspace{0.3cm}  ${\|u_{h}-u\|_2}$ \hspace{0.3cm}    & order  & $\tau$ & \hspace{0.3cm}  ${\|u_{h}-u\|_2}$ \hspace{0.3cm}    & order  &    $\tau$ & \hspace{0.3cm} ${\|u_{h}-u\|_2}$  \hspace{0.3cm}   &order  &  $\Qr_2$ & $\Qr_3$  \\
					\hline
					$1/2^3$  &  $1/16$ &  3.3269E-03  &    &  $1/8$ & 1.8605E-03  &    & $1/200$ & 5.09E-03  &    &  1.53  &  2.74\\
					$1/2^4$  &  $1/32$ &   7.1559E-04  &  2.22  &  $1/16$ & 3.9588E-04  &  2.23  & $1/200$ & 7.86E-04  &  2.69  &  1.10  &  1.99\\
					$1/2^5$  &  $1/64$ & 3.3296E-05  &  4.43  &  $1/32$ & 8.7041E-06  &  5.51  & $1/200$ & 1.01E-04  &  2.97  &  3.03  &  11.6\\
					$1/2^6$  &  $1/128$ & 4.6570E-06  &  2.84  & $1/64$ & 2.0001E-07  &  5.44  & $1/200$ & 1.26E-05  &  2.99  &  2.71  &  63.0\\
					$1/2^7$  &  $1/256$ & 1.0770E-06  &  2.11  & $1/128$ & 1.0019E-08  &  4.32  &  &   &    &    &  \\
					$1/2^8$  &  $1/512$ & 2.6870E-07  &  2.00  & $1/256$ & 1.3557E-09  &  2.89  &  &  &    &    &  \\			
					\hline
		\end{tabular}}}}
		\label{Example:4:table}
	\end{table}
	\begin{figure}[htbp]
		\centering
		\begin{subfigure}[b]{0.24\textwidth}
			\includegraphics[width=4.8cm,height=4.8cm]{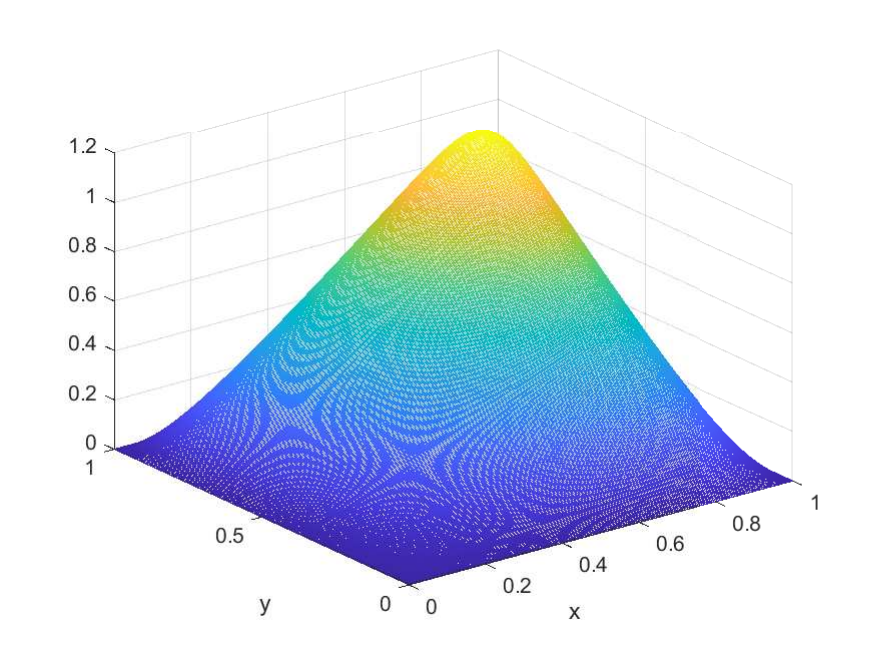}
		\end{subfigure}
		\begin{subfigure}[b]{0.24\textwidth}
			\includegraphics[width=4.8cm,height=4.8cm]{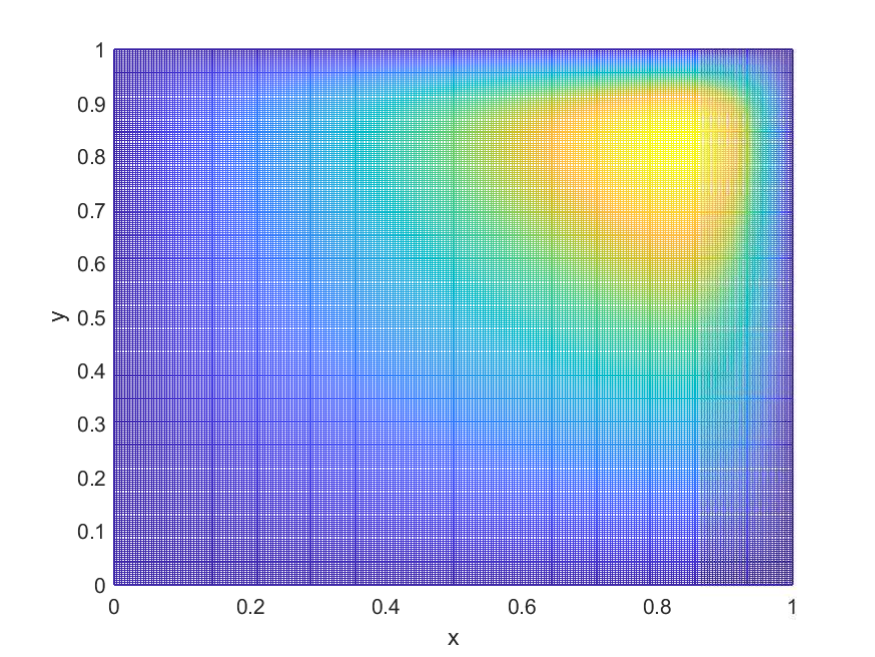}
		\end{subfigure}
		\begin{subfigure}[b]{0.24\textwidth}
			\includegraphics[width=4.8cm,height=4.8cm]{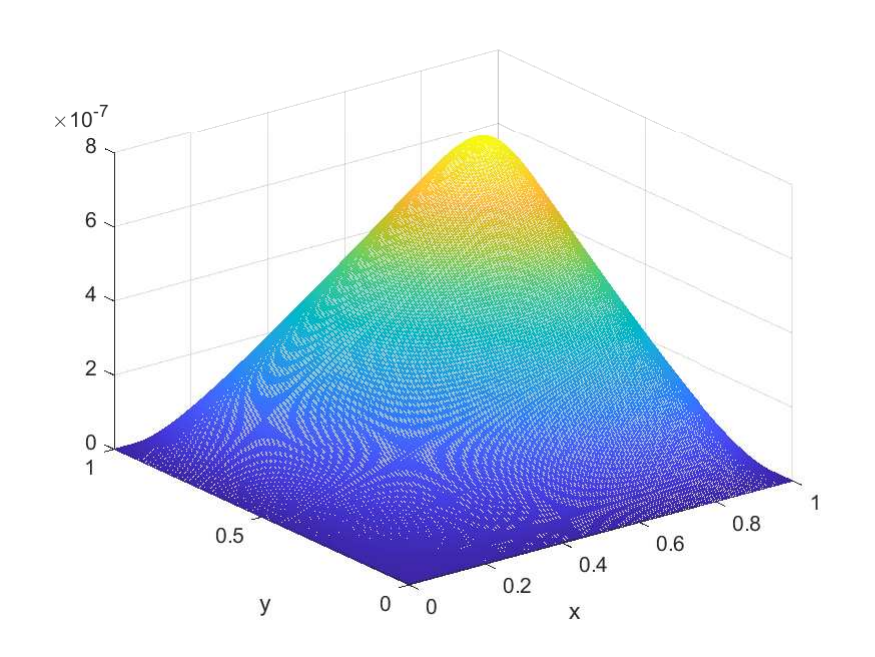}
		\end{subfigure}
		\begin{subfigure}[b]{0.24\textwidth}
			\includegraphics[width=4.8cm,height=4.8cm]{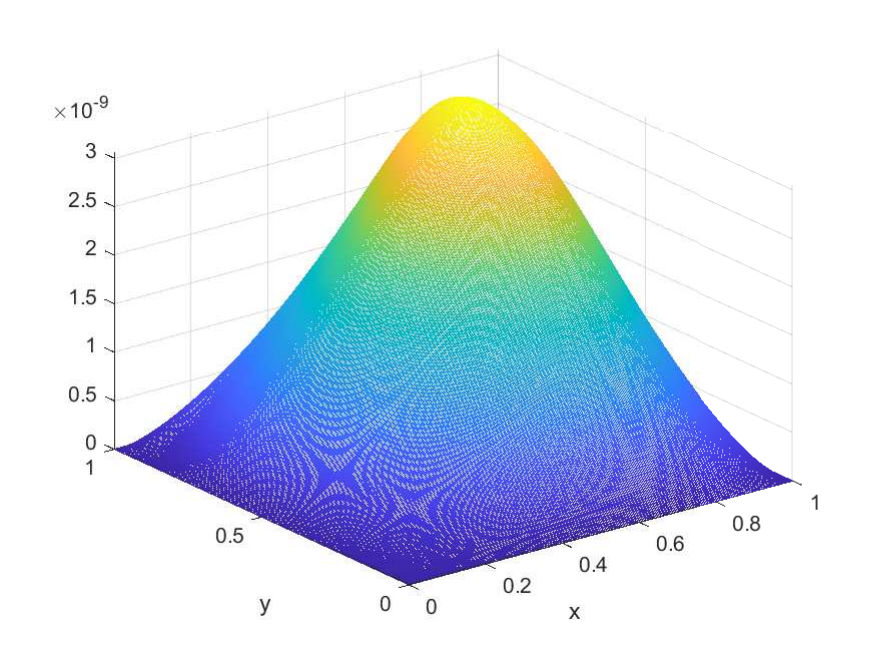}
		\end{subfigure}
		\caption
		{\cref{Example:4}:   The exact solution $u$ (first and second), the error $|u_h-u|$  with the numerical solution $u_h$ computed by \cref{algm2} (third), and  the error $|u_h-u|$  with the numerical solution $u_h$ computed by \cref{algm3} (fourth) on the closure of the spatial domain $[0,1]^2$ with $h=2^{-8}$.}
		\label{Example:4:fig}
	\end{figure}	
	\section{Contribution}\label{sec:Contribu}
	
	In this paper, we consider  the  steady and time-dependent nonlinear convection-diffusion equations  in a square domain  with the Dirichlet boundary condition. The main contributions of this paper are as follows:

	\begin{itemize}
		\item We present the fourth-order compact 9-point FDM for the steady nonlinear  equation, and derive the second-order to fourth-order compact 9-point FDMs for the  time-dependent nonlinear equation. To increase the accuracy, we modify FDMs to reduce the pollution effects. Each proposed  FDM preserves the discrete maximum principle and forms  an M-matrix, when $h$ is sufficiently small.
		\item We compare our method with the discontinuous Galerkin (DG) method in  \citep{Nguyen2009}, and the numerical results demonstrate that our proposed FDM generates the smaller errors. Precisely, when we apply the second-order CN method, our FDM scheme  produces the smaller errors than those from the third-order BDF3 and the DG methods in \citep{Nguyen2009}. Particularly, if the same BDF3 method is used, then we  achieve the error that is 1.6\% of that in \citep{Nguyen2009}.
		\item Our proposed method is accurate, robust, and stable for the variable and  time-dependent  diffusion coefficient $\kappa(x,y,t)$, and the challenging nonlinear term $\nab\cdot{\bm F} (u)$ (not limited to the Burgers equation).  The  examples  verify the accuracy and the theoretical convergence rates in the $l_2$ and $l_{\infty}$ norms. 
		\item The  matrix of the corresponding linear system constructed by our FDM only contains 9 nonzero bands. Due to the structure of the compact 9-point FDM, no special treatment is required for the grid points near the boundary. In comparison with the FEM, FVM, and DG methods, our high-order FDM avoids the numerical integration, resulting in the reduced computational cost. This advantage becomes particularly significant for the highly oscillatory $\kappa,\alpha,\beta,f$.
	\end{itemize}
	The proposed method can be naturally  extended to a 3D spatial domain and the more general nonlinear convection-diffusion-reaction equation:  $u_t-\nab\cdot (\kappa \nab u) +  \nab\cdot{\bm F} (u) +r(u) = f$, where $\kappa=\kappa(x,y,t,u)$.
	We also plan to extend our method to solve the more complicated problems of the two-phase flow in porous media in \citep{Jones2024} and the incompressible Boussinesq  equation in \citep{LiuWangJohn2003,WangLiuJohn2004}.

\section{Declarations}
\noindent \textbf{Conflict of interest:} The authors declare that they have no conflict of interest.\\
\noindent \textbf{Data availability:} Data will be made available on reasonable request.

\vspace{0.3cm}
\noindent\textbf{Acknowledgment}

Dr. John Burkardt (jvburkardt@gmail.com,  Department of Mathematics, University of Pittsburgh, Pittsburgh, PA 15260 USA) provided the editorial suggestions.
\end{document}